\renewcommand{\bar}{\overline}
\renewcommand{\hat}{\widehat}
\renewcommand{\tilde}{\widetilde}
\newtheorem{thm}{Theorem}[section]
\newtheorem{lem}[thm]{Lemma}
\newtheorem{exa}[thm]{Example}
\theoremstyle{definition}
\newcommand{\scr}[1]{\mathscr #1}
\definecolor{wco}{rgb}{0.5,0.2,0.3}
\numberwithin{equation}{section} \theoremstyle{remark}
\newtheorem{rem}{Remark}[section]
\newcommand{\ua}{\uparrow}
\title{{\bf Least squares estimator for path-dependent McKean-Vlasov SDEs via discrete-time observations}
}
\author{
{\bf  Panpan Ren and Jiang-Lun Wu}\\
\footnotesize{Department of Mathematics, Swansea University,
Singleton Park, SA2 8PP, UK}\\ 
\footnotesize{  673788@swansea.ac.uk, J.L.Wu@swansea.ac.uk}}
\begin{document}
\def\R{\mathbb R}  \def\ff{\frac} \def\ss{\sqrt} \def\B{\mathbf
B}
\def\N{\mathbb N} \def\kk{\kappa} \def\m{{\bf m}}
\def\dd{\delta} \def\DD{\Delta} \def\vv{\varepsilon} \def\rr{\rho}
\def\<{\langle} \def\>{\rangle} \def\GG{\Gamma} \def\gg{\gamma}
  \def\nn{\nabla} \def\pp{\partial} \def\EE{\scr E}
\def\d{\text{\rm{d}}} \def\bb{\beta} \def\aa{\alpha} \def\D{\scr D}
  \def\si{\sigma} \def\ess{\text{\rm{ess}}}
\def\beg{\begin} \def\beq{\begin{equation}}  \def\F{\scr F}
\def\Ric{\text{\rm{Ric}}} \def\Hess{\text{\rm{Hess}}}
\def\e{\text{\rm{e}}} \def\ua{\underline a} \def\OO{\Omega}  \def\oo{\omega}
 \def\tt{\tilde} \def\Ric{\text{\rm{Ric}}}
\def\cut{\text{\rm{cut}}} \def\P{\mathbb P} \def\ifn{I_n(f^{\bigotimes n})}
\def\C{\scr C}      \def\aaa{\mathbf{r}}     \def\r{r}
\def\gap{\text{\rm{gap}}} \def\prr{\pi_{{\bf m},\varrho}}  \def\r{\mathbf r}
\def\Z{\mathbb Z} \def\vrr{\varrho} \def\ll{\lambda}
\def\L{\scr L}\def\Tt{\tt} \def\TT{\tt}\def\II{\mathbb I}
\def\i{{\rm in}}\def\Sect{{\rm Sect}}\def\E{\mathbb E} \def\H{\mathbb H}
\def\M{\scr M}\def\Q{\mathbb Q} \def\texto{\text{o}} \def\LL{\Lambda}
\def\Rank{{\rm Rank}} \def\B{\scr B} \def\i{{\rm i}} \def\HR{\hat{\R}^d}
\def\to{\rightarrow}\def\l{\ell}
\def\8{\infty}\def\X{\mathbb{X}}\def\3{\triangle}
\def\V{\mathbb{V}}\def\M{\mathbb{M}}\def\W{\mathbb{W}}\def\Y{\mathbb{Y}}\def\1{\lesssim}

\def\La{\Lambda}\def\S{\mathbf{S}}

\renewcommand{\bar}{\overline}
\renewcommand{\hat}{\widehat}
\renewcommand{\tilde}{\widetilde}

\maketitle

\begin{abstract}
In this paper, we are  interested in least squares estimator for a
class of path-dependent McKean-Vlasov stochastic differential
equations (SDEs). More precisely, we investigate the consistency and
asymptotic distribution of the least squares estimator for the unknown
parameters involved by establishing an appropriate contrast function.
Comparing to the existing results in the literature, the innovations
of our paper lie in three aspects: (i) We adopt a tamed Euler-Maruyama
algorithm to establish the contrast function under the monotone condition,
under which the Euler-Maruyama scheme no longer works; (ii) We take the
advantage of linear interpolation with respect to the discrete-time
observations to approximate the functional solution; (iii) Our model
is more applicable and practice as we are dealing with SDEs with irregular
coefficients (e.g.,  H\"older continuous) and path-distribution dependent.

\end{abstract}
\noindent
 {\bf AMS subject Classification}:\  62F12, 62M05, 60G52, 60J75   \\
\noindent
 {\bf Keywords}:   McKean-Vlasov stochastic differential
 equation, tamed Euler-Maruyama scheme, weak monotonicity, least squares estimator, consistency, asymptotic distribution.

\section{Introduction and main results}
We start with some notation and terminology. Let
$(\R^d,\<\cdot,\cdot\>,|\cdot|)$ be the $d$-dimensional Euclidean
space, and $\R^d\otimes\R^m$ the collection of all $d\times m$
matrices endowed with the Hilbert-Schmidt norm $\|\cdot\|$. For
fixed $r_0>0$, $\C:=C([-r_0,0];\R^d)$ stands for the family of all
continuous functions $f:[-r_0,0]\rightarrow\R^d$ which is a Banach space
with the uniform norm $\|f\|_\8:=\sup_{-r_0\le v\le 0}|f(v)|$. Given any
integer $ p\ge1$, we use $\Theta$ to denote a bounded, open and convex
subset of $\R^p$ whose closure is written as $\bar\Theta$. Let $\mathcal
{P}(\C)$ be the totality of all probability measures on $\C$. Set
$\mathcal {P}_2(\C):=\{\mu\in\mathcal
{P}(\C):\mu(\|\cdot\|_\8^2):=\int_\C\|\xi\|_\8^2\mu(\d\xi)<\8\}$.
$(\mathcal {P}_2(\C),\W_2)$ is a Polish space under the Warsserstein
distance $\mathbb{W}_2$ on $\mathcal {P}_2(\C)$ defined by
$$\W_2(\mu,\nu):= \inf_{\pi\in \C(\mu,\nu)} \bigg(\int_{\C\times\C} \|\xi-\eta\|^2_\8\pi(\d \xi,\d \eta)\bigg)^{\ff 1 2},\ \ \mu,\nu\in \mathcal
{P}_2(\C),$$  where $\C(\mu,\nu)$ is the set of couplings for $\mu$
and $\nu$. As usual, we use $\lfloor a\rfloor$ to denote the integer part of $a\ge0.$

The time evolution for most of stochastic dynamical systems depends
not only on the present state but also on the past path. So,
path-dependent (i.e., functional) SDEs are much more desirable; see,
e.g., the monograph \cite{M84}. Since the pioneer work \cite{IN} due
to It\^o and Nisio, path-dependent SDEs have been investigated
considerably owing to their theoretical and practical importance;
see, e.g., Hairer et al. \cite{HMS11}, Wang \cite{W18} and the
references within.

 McKean-Vlasov SDEs, which are SDEs with coefficients dependent on the
 law, were initiated by \cite{Mc} inspired by Kac's programme in Kinetic
 theory. An excellent and thorough account of the general theory of McKean-Vlasov
 SDEs and their particle approximations can be found in \cite{sz}. McKean-Vlasov
 SDEs
are alternatively referred to as mean-field SDEs in the literature, which have
wide applications in interacting particle systems, optimal control problems,
differential games, just to mention but a few. Recently, McKean-Vlasov SDEs have been
extensively investigated on, e.g., wellposedness of strong/weak solutions (cf.
\cite{DST,HSS,LMb,MV,W16}), Freidlin-Wentzell large deviation principles (cf. \cite{DST}), ergodicity (cf. \cite{B14,EGZ,Ve}), links with nonlinear partial differential
equations (cf. \cite{BLPR,HRW,HRW}), and distribution properties (cf.\cite{Huang,W18}).

On the other hand, from stochastic and/or statistical aspects, there exist
unknown parameters in various type SDEs arising in mathematical modeling (cf.
\cite{B08}). Hence, there are vast of investigations paying attention to
parameter estimations for SDEs via maximum likelihood estimator,
least squares estimator (LSE for short), trajectory-fitting estimator, among others.
See, for instance, \cite{Ku04,LS01,M05,P199,SY06}. In the same vein, the parameter estimations for SDEs (without path-dependence) with small noises have been developed very
well; see, e.g., \cite{GS,HL,Long10,L09,LMS,LSS,M10,SM03,U04,U08}, and references
therein.

From above discussion, it is very natural to consider SDEs together with all four features of path dependence, distribution dependence, small noises and unknown parameter. So,
in the present work, we focus on the following path-distribution SDE
\begin{equation}\label{eq1}
\d X^\vv(t)=b(X_t^\vv,\mathscr{L}_{X_t^\vv},\theta)\d
t+\vv\,\si(X_t^\vv,\mathscr{L}_{X_t^\vv})\d B(t),
~~~t>0,~~~~X_0^\vv=\xi\in\C.
\end{equation}
Herein, $\vv\in(0,1)$ is the scale parameter; for fixed $t$,
$X_t^\vv(v):=X^\vv(t+v), v\in[-r_0,0],$ is called the segment (or
window) process generated by $X^\vv(t)$; $\mathscr{L}_{X_t^\vv}$
stands for the distribution of $X_t^\vv$; $b:\C\times\mathcal
{P}_2(\C)\times\Theta\rightarrow\R^d$ and $\si:\C\times\mathcal
{P}_2(\C)\rightarrow\R^d\otimes\R^m$ are continuous w.r.t.  the
first variable and the second variable;
  $\Theta\ni\theta $ is an unknown parameter whose true value is
  written as
$\theta_0\in\Theta$; and $(B(t))_{t\ge0}$ is an $m$-dimensional
Brownian motion on a filtered probability space
$(\OO,\F,(\F_t)_{t\ge0},\P)$ satisfying the usual conditions, that
is, $\F_t$ is non-decreasing (i.e., $\F_s\subseteq\F_t, s\le t),$
$\F_0$ contains all $\P$-null sets and $\F_t$ is right continuous
(i.e., $\F_t=\F_{t+}:=\bigcap_{s\uparrow t}\F_s$).

To guarantee the existence and uniqueness of solutions to \eqref{eq1},
we assume that, for any $\zeta_1,\zeta_2\in\C$,  $\mu,\nu\in\mathcal
{P}_2(\C)$, and $\theta\in\Theta,$
\begin{enumerate}
\item[({\bf A1})]  There exist
$\aa_1,\aa_2>0$ such that
\begin{equation*}
\<\zeta_1(0)-\zeta_2(0),b(\zeta_1,\mu,\theta)-b(\zeta_2,\nu,\theta)\>\le\aa_1\|\zeta_1-\zeta_2\|_\8^2+\aa_2\mathbb{W}_2(\mu,\nu)^2;
\end{equation*}

\item[({\bf A2})]  There exist $\bb_1,\bb_2>0$ such that
\begin{equation*}
\|\si(\zeta_1,\mu)-\si(\zeta_2,\nu)\|^2\le
\bb_1\|\zeta_1-\zeta_2\|_\8^2+\bb_2\mathbb{W}_2(\mu,\nu)^2.
\end{equation*}
\end{enumerate}

From \cite[Theorem 3.1]{HRW}, \eqref{eq1} has a unique
strong solution $(X^\vv(t))_{t\ge-r_0}$ under the assumptions ({\bf
A1}) and ({\bf A2}). For any $\zeta_1,\zeta_2\in\C$,
$\mu,\nu\in\mathcal {P}_2(\C)$, and $\theta\in\Theta,$ if there
exist $\aa,\bb>0$ such that
\begin{equation*}
\<\zeta_1(0)-\zeta_2(0),b(\zeta_1,\mu,\theta)-b(\zeta_2,\mu,\theta)\>\le\aa\|\zeta_1-\zeta_2\|_\8^2
\end{equation*}
and
\begin{equation*}
|b(\zeta_2,\mu,\theta)-b(\zeta_2,\nu,\theta)|\le
\bb\mathbb{W}_2(\mu,\nu),
\end{equation*}
then  ({\bf A1}) holds.

Without loss of generality, we arbitrarily fix the time horizontal $T>0$ and assume that there exist positive integers $n,M$ sufficiently large such that
$\dd:=\frac{T}{n}=\ff{r_0}{M}$. Now we define the continuous-time
tamed Euler-Maruyama (EM) scheme (see, e.g., \cite{HJK}) associated
with \eqref{eq1}
\begin{equation}\label{q3}
\d Y^\vv(t)=b^{(\dd)}(\bar Y^\vv_{t_\dd},\mathscr{L}_{\bar
Y^\vv_{t_\dd}},\theta)\d t+ \vv\,\si(\bar
Y^\vv_{t_\dd},\mathscr{L}_{\bar Y^\vv_{t_\dd}})\d B(t),~~~~t>0
\end{equation}
with the initial value $Y^\vv(t)=X^\vv(t)=\xi(t)$ for any $
t\in[-r_0,0]$, where
\begin{itemize}
\item $t_\dd:=\lfloor t/\dd\rfloor\dd$ for $t\ge0;$

\item For  any $\zeta\in\C$ and $\mu\in\mathcal {P}_2(\C)$,
\begin{equation}\label{e4}
b^{(\dd)}(\zeta,\mu,\theta):=\ff{b(\zeta,\mu,\theta)}{1+\dd^\aa|b(\zeta,\mu,\theta)|},~~~~\aa\in(0,1/2];
\end{equation}

\item For $k=0,1,\cdots,n,$
$\bar Y_{k\dd}^\vv=\{ \bar Y_{k\dd}^\vv(s):-r_0\le s\le0\}$, a
$\C$-valued random variable, is  defined by
\begin{equation}\label{w2}
\bar
Y_{k\dd}^\vv(s)=Y^\vv((k-i)\dd)+\ff{s+i\dd}{\dd}\{Y^\vv((k-i)\dd)
-Y^\vv((k-i-1)\dd)\}
\end{equation}
for any $s\in[-(i+1)\dd,-i\dd]$, $i=0,1,\cdots,M-1$, that is, $\bar
Y_{k\dd}^\vv$ is the linear interpolation of the points
$(Y^\vv(l\dd))_{l=k-M,\cdots,k}$.
\end{itemize}

We denote $(Y_t^\vv)_{t\ge0}$ by the segment process generated by
$(Y^\vv(t))_{t\ge-r_0}$. It is worthy to point out that $\bar
Y^\vv_{t_\dd}\in\C$ is defined by \eqref{w2} rather than by $\bar
Y^\vv_{t_\dd}(s)=\bar Y^\vv(t_\dd+s)$ for any $s\in[-r_0,0]$.
 Based on the
continuous-time tamed EM algorithm \eqref{q3}, we design the
following contrast function
\begin{equation}\label{eq2}
\Psi_{n,\vv}(\theta)=\vv^{-2}\delta^{-1}\sum_{k=1}^nP_k^*(\theta)\hat\si(\bar
Y_{(k-1)\dd}^\vv)P_k(\theta),
\end{equation}
in which, for  $k=1,\cdots,n$,
\begin{equation}\label{w1}
P_k(\theta):=Y^\vv(k\dd)-Y^\vv((k-1)\dd)-b^{(\dd)}(\bar
Y_{(k-1)\dd}^\vv,\mathscr{L}_{\bar Y_{(k-1)\dd}^\vv},\theta)\dd,~
\hat\si(\bar Y_{k\dd}^\vv):=(\si\si^*)^{-1}(\bar
Y_{k\dd}^\vv,\mathscr{L}_{\bar Y_{k\dd}^\vv}).
\end{equation}
For more motivations on the construction of constrast function
above, we refer to Ren-Wu \cite{RW}. To obtain the LSE of
$\theta\in\Theta$, it   is sufficient  to choose an element
$\hat\theta_{n,\vv}\in\Theta$ satisfying
\begin{equation*}
\Psi_{n,\vv}(\hat\theta_{n,\vv})=\min_{\theta\in\Theta}\Psi_{n,\vv}(\theta).
\end{equation*}
Whence, for
\begin{equation*}
\Phi_{n,\vv}(\theta):=\vv^2(\Psi_{n,\vv}(\theta)-\Psi_{n,\vv}(\theta_0)),
\end{equation*}
one has
\begin{equation}\label{eq4}
\Phi_{n,\vv}(\hat\theta_{n,\vv})=\min_{\theta\in\Theta}\Phi_{n,\vv}(\theta).
\end{equation}
We shall rewrite $\hat\theta_{n,\vv}\in\Theta$ such that \eqref{eq4}
holds true as
\begin{equation*}
\hat\theta_{n,\vv}=\arg\min_{\theta\in\Theta}\Phi_{n,\vv}(\theta),
\end{equation*}
which   is called the LSE of the unknown parameter
$\theta\in\Theta$.

To discuss the consistency of LSE (see Theorem \ref{th1} below), we
further suppose that, for any $\zeta,\zeta_1,\zeta_2\in\C$,
$\mu,\nu\in\mathcal {P}_2(\C)$, and $\theta\in\Theta,$
\begin{enumerate}

\item[({\bf B1})] There exist $q_1,L_1>0$ such that
\begin{equation*}
|b(\zeta_1,\mu,\theta)-b(\zeta_2,\nu,\theta)|\le
L_1\Big\{(1+\|\zeta_1\|_\8^{q_1}+\|\zeta_2\|_\8^{q_1})\|\zeta_1-\zeta_2\|_\8+\mathbb{W}_2(\mu,\nu)\Big\};
\end{equation*}
\item[({\bf B2})] There exist $q_2,L_2>0$ such that
\begin{equation*}
\sup_{\theta\in\bar\Theta}\|(\nn_\theta
b)(\zeta_1,\mu,\theta)-(\nn_\theta b)(\zeta_2,\nu,\theta)\|\le
L_2\Big\{(1+\|\zeta_1\|_\8^{q_2}+\|\zeta_2\|_\8^{q_2})\|\zeta_1-\zeta_2\|_\8+\mathbb{W}_2(\mu,\nu)\Big\},
\end{equation*}
where $(\nn_\theta b)$ is the gradient operator w.r.t. the third
spatial variable;

\item[({\bf B3})] $(\si\si^*)(\zeta,\mu)$ is invertible, and
there exist $q_3,L_3>0$ such that
\begin{equation*}
\|(\si\si^*)^{-1}(\zeta_1,\mu)-(\si\si^*)^{-1}(\zeta_2,\nu)\|\le
L_3\Big\{(1+\|\zeta_1\|_\8^{q_3}+\|\zeta_2\|_\8^{q_3})\|\zeta_1-\zeta_2\|_\8+\mathbb{W}_2(\mu,\nu)\Big\};
\end{equation*}
\item[({\bf B4})] There exists a constant $K>0$ such that
\begin{equation*}
|\xi(t)-\xi(s)|\le K|t-s|,~~~t,s\in[-r_0,0],
\end{equation*}
where $\xi(\cdot)$ stands for the initial value of \eqref{eq1}.
\end{enumerate}

In order to reveal the asymptotic distribution of LSE (see Theorem
\ref{th2} below), we   in addition assume that
\begin{enumerate}
\item[({\bf C})] There exist $q_4,L_4>0$ such that
\begin{equation*}
\begin{split}
&\sup_{\theta\in\bar\Theta}\|(\nn_\theta(\nn_\theta
b^*))(\zeta_1,\mu,\theta)-(\nn_\theta(\nn_\theta
b^*))(\zeta_2,\nu,\theta)\|\\
&\le
L_4\Big\{(1+\|\zeta_1\|_\8^{q_4}+\|\zeta_2\|_\8^{q_4})\|\zeta_1-\zeta_2\|_\8+\mathbb{W}_2(\mu,\nu)\Big\},
\end{split}
\end{equation*}
where $b^*$ means the transpose of $b.$
\end{enumerate}

Next we consider the following deterministic path-dependent ordinary
equation
\begin{equation}\label{k1}
\d X^0(t)=b(X_t^0,\mathscr{L}_{X_t^0},\theta_0)\d
t,~~~t>0,~~~X_0^0=\xi\in\C.
\end{equation}
Under the assumption ({\bf A1}), \eqref{k1} is wellposed. In
\eqref{k1}, $\mathscr{L}_{X_t^0}$ is indeed a Dirac's delta measure
at the point $X_t^0$ as $X_t^0$ is deterministic. To unify the
notation, we keep the notation $\mathscr{L}_{X_t^0}$ in lieu of
$\dd_{X_t^0}$. We remark that ({\bf B4}) is imposed to guarantee
that the linear interpolation $\bar Y^\vv_{t_\dd}$ tends to $X_t^0$
in the moment sense, see Lemma \ref{le1} below.

 For any random variable $\zeta\in\C$ with
$\mathscr{L}_\zeta\in\mathcal {P}_2(\C)$,  set
\begin{equation}\label{p1}
\Gamma(\zeta,\theta,\theta_0):=b(\zeta,\mathscr{L}_\zeta,\theta_0)
-b(\zeta,\mathscr{L}_\zeta,\theta),~~
~~\Gamma^{(\dd)}(\zeta,\theta,\theta_0):=b^{(\dd)}(\zeta,\mathscr{L}_\zeta,\theta_0)
-b^{(\dd)}(\zeta,\mathscr{L}_\zeta,\theta),
\end{equation}
and, for any $\theta\in\Theta,$
\begin{equation*}\label{e0}
\Xi(\theta)=\int_0^T\Gamma^*(X_t^0,\theta,\theta_0)\hat\si(X_t^0)\Gamma(X_t^0,\theta,\theta_0)\d
t,
\end{equation*}
where $(X_t^0)_{t\ge0}$ is the functional solution  to \eqref{k1}.

The  theorem  below is concerned with the consistency of the LSE for
the parameter $\theta\in\Theta$, which is the first contribution of
our work.

\begin{thm}\label{th1}
  Let $({\bf A1})-({\bf A2})$ and $({\bf B1})-({\bf B4})$ hold   and
assume further that $\Xi(\theta)>0$ for $\theta\neq\theta_0$. Then
\begin{equation*}
\hat\theta_{n,\vv}\rightarrow\theta_0~~~~\mbox{ in probability as }
 \vv\rightarrow0 ~~\mbox{ and }~~n\to\8.
\end{equation*}

\end{thm}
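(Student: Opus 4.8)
The plan is to follow the standard argmin-consistency route: show that the normalized contrast $\Phi_{n,\vv}(\theta)$ converges (in probability, uniformly in $\theta\in\bar\Theta$) to the deterministic limit $\Xi(\theta)$, and then use that $\Xi$ has a strict minimum at $\theta_0$ together with compactness of $\bar\Theta$ to conclude that any minimizer $\hat\theta_{n,\vv}$ must concentrate at $\theta_0$. First I would rewrite $P_k(\theta)$ using the tamed EM scheme \eqref{q3}: on $[(k-1)\dd,k\dd]$ one has $Y^\vv(k\dd)-Y^\vv((k-1)\dd)=b^{(\dd)}(\bar Y^\vv_{(k-1)\dd},\mathscr L_{\bar Y^\vv_{(k-1)\dd}},\theta_0)\dd+\vv\int_{(k-1)\dd}^{k\dd}\si(\bar Y^\vv_{(k-1)\dd},\mathscr L_{\bar Y^\vv_{(k-1)\dd}})\d B(t)$, so that $P_k(\theta)=\GG^{(\dd)}(\bar Y^\vv_{(k-1)\dd},\theta,\theta_0)\dd+\vv\,\Delta_k B$-type stochastic increments. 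Substituting into \eqref{eq2}, expanding the quadratic form $P_k^*(\theta)\hat\si P_k(\theta)-P_k^*(\theta_0)\hat\si P_k(\theta_0)$ (note the stochastic-integral-squared terms carry a factor $\vv^2$, which is exactly cancelled by the $\vv^2$ in $\Phi_{n,\vv}$, while the cross terms carry $\vv$ and vanish), I would arrive at a decomposition $\Phi_{n,\vv}(\theta)=\dd\sum_{k=1}^n\GG^{(\dd),*}\hat\si(\bar Y^\vv_{(k-1)\dd})\GG^{(\dd)}+R_{n,\vv}(\theta)$ where the leading sum is a Riemann sum and $R_{n,\vv}(\theta)$ collects cross terms and pure-noise remainders.

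The next step is to control the two pieces. For the Riemann-sum term, Lemma \ref{le1} (the moment convergence of the linear interpolation $\bar Y^\vv_{t_\dd}$ to the deterministic functional solution $X^0_t$) together with the Lipschitz-type bounds (\textbf{B1}) on $b$, (\textbf{B3}) on $(\si\si^*)^{-1}$, and the observation that $b^{(\dd)}\to b$ with a rate controlled by $\dd^\aa$, lets me replace $\GG^{(\dd)}(\bar Y^\vv_{(k-1)\dd},\cdot)$ by $\GG(X^0_{(k-1)\dd},\cdot)$ and $\hat\si(\bar Y^\vv_{(k-1)\dd})$ by $\hat\si(X^0_{(k-1)\dd})$ up to an $L^1$-error that is $o(1)$ as $\vv\to0$, $n\to\8$; the resulting sum $\dd\sum_k\GG^*\hat\si\GG(X^0_{(k-1)\dd})$ is a genuine Riemann sum for $\Xi(\theta)=\int_0^T\GG^*(X^0_t,\theta,\theta_0)\hat\si(X^0_t)\GG(X^0_t,\theta,\theta_0)\d t$, which converges since $t\mapsto X^0_t$ is continuous (hence the integrand is continuous). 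For the remainder $R_{n,\vv}(\theta)$, the cross terms are of the form $\vv\,\dd\sum_k\GG^{(\dd),*}\hat\si\cdot(\text{stochastic increment})$ and the pure-noise terms of the form $\sum_k(\text{stoch.\ incr.})^*\hat\si(\text{stoch.\ incr.})-(\cdots)$; using (\textbf{A2}), moment bounds on $X^\vv,Y^\vv$, It\^o isometry and Burkholder–Davis–Gundy, each is shown to be $O(\vv)$ or $O(\vv\sqrt{T})$ in $L^1$ or $L^2$ uniformly in $\theta$, hence $\to0$. To upgrade pointwise to uniform convergence over the compact $\bar\Theta$, I would establish equicontinuity in $\theta$: differentiating in $\theta$ and invoking (\textbf{B2}) (the Lipschitz bound on $\nn_\theta b$) gives a uniform-in-$\theta$ bound on $\nn_\theta\Phi_{n,\vv}$ in $L^1$, which yields tightness/uniform convergence via an Arzelà–Ascoli-type argument for random functions.

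Finally, the conclusion: $\Phi_{n,\vv}(\theta)\to\Xi(\theta)$ uniformly in probability, $\Xi(\theta_0)=0$, $\Xi(\theta)>0$ for $\theta\neq\theta_0$, and $\Xi$ is continuous on the compact $\bar\Theta$; by the standard argmin continuous-mapping argument, for any $\eta>0$ we have $\inf_{|\theta-\theta_0|\ge\eta}\Xi(\theta)=:c_\eta>0$, and on the event $\{\sup_{\theta}|\Phi_{n,\vv}(\theta)-\Xi(\theta)|<c_\eta/2\}$ (which has probability $\to1$) the minimizer satisfies $\Xi(\hat\theta_{n,\vv})<c_\eta$, forcing $|\hat\theta_{n,\vv}-\theta_0|<\eta$. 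Hence $\hat\theta_{n,\vv}\to\theta_0$ in probability. The main obstacle I anticipate is the uniform-in-$\theta$ control of the remainder and of the $\dd^\aa$/interpolation errors under only the \emph{monotone} condition (\textbf{A1}) and \emph{polynomial-growth} Lipschitz conditions (\textbf{B1})–(\textbf{B3}): the estimates require uniform moment bounds $\sup_{\vv\in(0,1)}\E\|X^\vv_t\|_\8^{q}<\8$ and $\sup\E\|\bar Y^\vv_{t_\dd}\|_\8^{q}<\8$ for all the relevant exponents $q$ (built from $q_1,q_2,q_3$), and these moment bounds for the tamed scheme under monotonicity are exactly where the technical work concentrates — this is presumably the content of Lemma \ref{le1} and the lemmas preceding it, which I would invoke rather than reprove.
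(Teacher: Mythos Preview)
Your proposal is correct and follows essentially the same route as the paper: the paper expands $\Phi_{n,\vv}(\theta)$ into exactly your decomposition (the Riemann sum $\dd\sum_k(\Gamma^{(\dd)})^*\hat\si\,\Gamma^{(\dd)}\to\Xi(\theta)$ plus the cross term $2\sum_k(\Gamma^{(\dd)})^*\hat\si\,P_k(\theta_0)=2\vv\int_0^T(\Gamma^{(\dd)})^*\hat\si\,\si\,\d B\to 0$ --- note the pure-noise quadratic terms cancel exactly in $P_k^*(\theta)\hat\si P_k(\theta)-P_k^*(\theta_0)\hat\si P_k(\theta_0)$, so there is no separate noise remainder, and the cross term carries no extra factor~$\dd$), establishes both limits as Lemma~\ref{lem}, and then invokes van der Vaart's argmin theorem \cite[Theorem~5.9]{V98}. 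Your explicit treatment of uniformity in $\theta$ via equicontinuity and (\textbf{B2}) is a welcome addition, since the paper passes over this step rather quickly.
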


For $A:=(A_1,A_2,\cdots,A_p)\in\R^p\otimes\R^{pd}$ with  $A_{k}\in
\R^p\otimes\R^d$, $k=1,\cdots,p,$ and $B\in\R^d$, define $A\circ
B\in\R^p\otimes\R^p$ by
\begin{equation*}
A\circ B=(A_1B,A_2B,\cdots,A_pB).
\end{equation*}
For any $\theta\in\Theta$, set
\begin{equation}\label{0z3}
I(\theta):=\int_0^T(\nn_\theta
b)^*(X_t^0,\mathscr{L}_{X_t^0},\theta)\hat\si(X_t^0)(\nn_\theta
b)(X_t^0,\mathscr{L}_{X_t^0},\theta)\d t,
\end{equation}
\begin{equation}\label{0z2}
\begin{split}
K(\theta):&=-2\int_0^T(\nn_\theta^{(2)}
b^*)(X_t^0,\mathscr{L}_{X_t^0},\theta)\circ\Big(\hat\si(X_t^0)
\Gamma(X_t^0,\theta,\theta_0)\Big)\d t,
\end{split}
\end{equation}
where $(\nn_\theta^{(2)} b^*):=(\nn_\theta(\nn_\theta b^*))$, and,
for any random variable $\zeta\in\C$ with
$\mathscr{L}_\zeta\in\mathcal {P}_2(\C)$,
\begin{equation}\label{0s0}
\Upsilon(\zeta,\theta_0)=(\nn_\theta
b)^*(\zeta,\mathscr{L}_{\zeta},\theta_0)\hat\si(\zeta)\si(\zeta,\mathscr{L}_{\zeta}).
\end{equation}

Another main result in this paper is presented as below, which
reveals the asymptotic distribution of $\hat\theta_{n,\vv}.$

\begin{thm}\label{th2}
  Let the assumptions of Theorem \ref{th1} hold and suppose
further that $({\bf C})$ holds and that $I(\cdot)$ and $K(\cdot)$
defined in \eqref{0z3} and \eqref{0z2}, respectively, are
continuous. Then,
\begin{equation*}
\vv^{-1}(\hat\theta_{n,\vv}-\theta_0)\rightarrow
I^{-1}(\theta_0)\int_0^T\Upsilon(X_t^0,\theta_0)\d B(t)~~~~\mbox{ in
probability }
\end{equation*}
as $\vv\rightarrow0$ and $n\rightarrow\8$, where  $\Upsilon(\cdot)$
is given in \eqref{0s0}.
\end{thm}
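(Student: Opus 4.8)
The plan is to derive the asymptotic distribution by a Taylor expansion of the contrast function around the true parameter $\theta_0$, exactly in the spirit of classical $M$-estimation arguments for small-noise SDEs. Since $\hat\theta_{n,\vv}$ minimizes $\Phi_{n,\vv}$ over the open convex set $\Theta$ and, by Theorem \ref{th1}, converges in probability to the interior point $\theta_0$, with high probability $\hat\theta_{n,\vv}$ is a stationary point, so $(\nn_\theta\Phi_{n,\vv})(\hat\theta_{n,\vv})=0$. Expanding this identity gives
\begin{equation*}
0=(\nn_\theta\Phi_{n,\vv})(\theta_0)+\Big(\int_0^1(\nn_\theta^{(2)}\Phi_{n,\vv})(\theta_0+u(\hat\theta_{n,\vv}-\theta_0))\,\d u\Big)(\hat\theta_{n,\vv}-\theta_0),
\end{equation*}
so that $\vv^{-1}(\hat\theta_{n,\vv}-\theta_0)=-\big(\text{Hessian term}\big)^{-1}\vv^{-1}(\nn_\theta\Phi_{n,\vv})(\theta_0)$. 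The strategy is then to identify the limits of the two factors separately: the Hessian factor should converge in probability to $2I(\theta_0)$ (using that $\Gamma(X_t^0,\theta_0,\theta_0)=0$, which kills the $K(\theta)$-type contribution at $\theta=\theta_0$), and the scaled gradient $\vv^{-1}(\nn_\theta\Phi_{n,\vv})(\theta_0)$ should converge in probability to $-2\int_0^T\Upsilon(X_t^0,\theta_0)\,\d B(t)$. Dividing out the factor $2$ yields the claimed limit $I^{-1}(\theta_0)\int_0^T\Upsilon(X_t^0,\theta_0)\,\d B(t)$.

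The core computation is unwinding $\nn_\theta\Phi_{n,\vv}$ and $\nn_\theta^{(2)}\Phi_{n,\vv}$ in terms of the increments $P_k(\theta)$ from \eqref{w1}. Writing $\Phi_{n,\vv}(\theta)=\vv^2(\Psi_{n,\vv}(\theta)-\Psi_{n,\vv}(\theta_0))$ and using $\nn_\theta P_k(\theta)=-(\nn_\theta b^{(\dd)})(\bar Y_{(k-1)\dd}^\vv,\mathscr{L}_{\bar Y_{(k-1)\dd}^\vv},\theta)\dd$, one gets
\begin{equation*}
(\nn_\theta\Phi_{n,\vv})(\theta_0)=-2\dd\sum_{k=1}^n(\nn_\theta b^{(\dd)})^*(\bar Y_{(k-1)\dd}^\vv,\mathscr{L}_{\bar Y_{(k-1)\dd}^\vv},\theta_0)\,\hat\si(\bar Y_{(k-1)\dd}^\vv)\,P_k(\theta_0).
\end{equation*}
The crucial point is that, along the true dynamics, $P_k(\theta_0)$ is (up to the tamed-scheme correction and the discretization of $b$) exactly $\vv$ times a Brownian increment, namely $P_k(\theta_0)\approx\vv\,\si(\bar Y_{(k-1)\dd}^\vv,\cdot)\,(B(k\dd)-B((k-1)\dd))$ plus a remainder that is $o(\vv)$ after summation. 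Substituting and using $\hat\si\,\si\si^*=\mathrm{Id}$ together with the definition \eqref{0s0} of $\Upsilon$, the leading term of $\vv^{-1}(\nn_\theta\Phi_{n,\vv})(\theta_0)$ becomes a Riemann–Itô sum $-2\sum_{k=1}^n\Upsilon(\bar Y_{(k-1)\dd}^\vv,\theta_0)(B(k\dd)-B((k-1)\dd))$, which converges in probability to $-2\int_0^T\Upsilon(X_t^0,\theta_0)\,\d B(t)$ by combining Lemma \ref{le1} (linear interpolation $\bar Y_{t_\dd}^\vv\to X_t^0$ in moment) with a standard stochastic-Riemann-sum estimate, controlled via the polynomial Lipschitz bounds (B1)–(B3) and (C). For the Hessian, the same substitution shows $\nn_\theta^{(2)}\Phi_{n,\vv}(\theta_0)$ has one piece converging to $2I(\theta_0)$ (the "$|\nn_\theta b|^2$" piece, which survives the $\vv\to0$ limit because it does not carry a factor of $\vv$ after the $\vv^{-2}$ normalization) and one piece involving $\Gamma^{(\dd)}(\cdot,\theta_0,\theta_0)=0$ that vanishes; the continuity hypotheses on $I(\cdot)$ and $K(\cdot)$ plus the consistency of $\hat\theta_{n,\vv}$ let us replace the argument $\theta_0+u(\hat\theta_{n,\vv}-\theta_0)$ by $\theta_0$ in the limit.

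The main obstacle is the uniform control of the remainder terms when simultaneously sending $\vv\to0$ and $n\to\8$ (equivalently $\dd\to0$). Three sources of error must be shown to be negligible after the $\vv^{-2}\dd^{-1}$ scaling and summation over $k=1,\dots,n$: (i) the tamed correction $b^{(\dd)}-b=-\dd^\aa|b|\,b^{(\dd)}$, whose contribution to $P_k(\theta_0)$ is $O(\dd^{1+\aa})$ per step and must be summed against the Gaussian-order terms; (ii) the discretization error between $b(\bar Y_{(k-1)\dd}^\vv,\cdot)$ evaluated at the interpolation and the true drift $b(X_t^0,\cdot)$ integrated over $[(k-1)\dd,k\dd]$, handled by Lemma \ref{le1} and the polynomial moment bounds on $\sup_t\|Y_t^\vv\|_\8$ and $\sup_t\|X_t^0\|_\8$; and (iii) the cross terms between the $O(\vv)$ stochastic part and the $O(\dd)$ deterministic part of $P_k(\theta_0)$, which contribute at order $\vv\dd^{1/2}\to0$. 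Each of these requires a moment estimate of the type $\E\big[\sup_{0\le t\le T}\|Y^\vv(t)-X^0(t)\|_\8^2\big]\to0$ at an explicit rate in $\vv$ and $\dd$, which is presumably established in the lemmas preceding Theorem \ref{th1}; assembling these into a single "$o_{\P}(1)$" bound, uniform in the Taylor-expansion parameter $u\in[0,1]$, is the delicate bookkeeping at the heart of the argument. Once that is in place, Slutsky's lemma finishes the proof.
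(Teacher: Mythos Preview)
Your proposal is correct and follows essentially the same route as the paper: a first-order Taylor expansion of $(\nabla_\theta\Phi_{n,\vv})$ around $\theta_0$, with Lemma \ref{le2} giving the limit of the scaled gradient $\vv^{-1}(\nabla_\theta\Phi_{n,\vv})(\theta_0)\to-2\int_0^T\Upsilon(X_t^0,\theta_0)\,\d B(t)$ and Lemma \ref{le3} giving the Hessian limit, followed by the observation that $K(\theta_0)=0$ because $\Gamma(X_t^0,\theta_0,\theta_0)=0$.

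One simplification you are missing: by construction, $P_k(\theta_0)$ is \emph{exactly} $\vv\,\si(\bar Y_{(k-1)\dd}^\vv,\mathscr{L}_{\bar Y_{(k-1)\dd}^\vv})(B(k\dd)-B((k-1)\dd))$, with no remainder. The tamed drift $b^{(\dd)}$ appears both in the scheme \eqref{q3} defining $Y^\vv$ and in the definition \eqref{w1} of $P_k$, so the drift cancels identically. Hence your error sources (i) and (iii) in $P_k(\theta_0)$ simply do not arise, and the gradient at $\theta_0$ is already an exact discrete stochastic integral $-2\int_0^T\Upsilon^{(\dd)}(\bar Y_{t_\dd}^\vv,\theta_0)\,\d B(t)$; see \eqref{c1}. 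The tamed correction $b^{(\dd)}-b$ enters only when you compare $(\nabla_\theta b^{(\dd)})$ with $(\nabla_\theta b)$, i.e.\ in passing from $\Upsilon^{(\dd)}$ to $\Upsilon$; this is handled by the explicit formula \eqref{v3} and the bound \eqref{v5} in the paper, together with Lemma \ref{le1}. So your bookkeeping is slightly heavier than needed, but the approach is sound and matches the paper's.
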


With contrast to the existing literature, the innovations of this
paper lie  in:
\begin{itemize}
\item[(i)] The classical contrast function for LSE is based on EM
algorithm. Whereas, under the monotone condition, the EM scheme   no
longer  works. Hence in the present work we adopt a tamed EM method
to establish the corresponding contrast function. The above is our
first innovation.

\item[(ii)]For the classical setup,   the discrete-time
observations at the gridpoints are sufficient to construct the
contrast function. Nevertheless, for our present model, the
discrete-time observations are insufficient to establish the
contrast function since the SDEs involved are path-dependent. In
this paper, we overcome the difficulty mentioned by linear
interpolation w.r.t. the discrete-time observations. The above is
our second innovation.

\item[(iii)] Our model is much more applicable, which allow the
coefficients to be distribution-dependent and weakly monotone. In
particular, the drift terms are allowed to be singular (e.g.,
H\"older continuous). The above is our third innovation.

\end{itemize}

Now, we provide a concrete  example to demonstrate Theorems
\ref{th1} and \ref{th2}.
\begin{exa}\label{exa}
For any $\vv\in(0,1)$, consider the following scalar
path-distribution dependent SDE
\begin{equation}
\begin{split}
\d X^\vv(t)&=\theta^{(1)}+\theta^{(2)}\int_\C\Big(-
(X^\vv(t))^3+X^\vv(t)+\int_{-r_0}^0X^\vv(t+\theta)\d\theta+\int_\C
 \zeta(\theta) \d\theta\Big)\mathscr{L}_{X^\vv_t}(\d\zeta)\\
&\quad+\vv\,\Big(1+ \int_{-r_0}^0X^\vv(t+\theta)|\d\theta\Big)\,
\d B(t),~~~t\ge0
\end{split}
\end{equation}
with the initial value $X_0^\vv=\xi\in\C$  which is Lipschitz,
where, for some $c_1<c_2$ and $c_3<c_4 $,
$\theta=(\theta^{(1)},\theta^{(2)})^*\in\Theta_0:=(c_1,c_2)\times(c_3,c_4)\subset\R^2_+$
 is an unknown parameter with the
true value $\theta_0=(\theta^{(1)}_0,\theta^{(2)}_0)^*\in\Theta_0$.
Let $\hat\theta_{n,\vv}$ be the LSE for the unknown parameter
$\theta=(\theta^{(1)},\theta^{(2)})^*\in\Theta_0$. Then,
\begin{equation*}
\hat\theta_{n,\vv}\rightarrow\theta_0~~~~\mbox{ in probability as }
 \vv\rightarrow0 ~~\mbox{ and }~~n\to\8,
\end{equation*}
and
\begin{equation*}
\vv^{-1}(\hat\theta_{n,\vv}-\theta_0)\rightarrow
I^{-1}(\theta_0)\int_0^T\Upsilon(X_t^0,\theta_0)\d B(t)~~~~\mbox{ in
probability }
\end{equation*}
as $\vv\rightarrow0$ and $n\rightarrow\8$, where
\begin{equation*}
I(\theta_0)=\left(\begin{array}{ccc}
 \int_0^T\ff{1}{(1+|X_s^0|)^2}\d s  &  \int_0^T\ff{b_0(X_s^0,X_s^0)}{(1+|X_s^0|)^2}\d s \\
  \int_0^T\ff{b_0(X_s^0,X_s^0)}{(1+|X_s^0|)^2} \d s &  \int_0^T\ff{b_0^2(X_s^0,X_s^0)}{(1+|X_s^0|)^2} \d s\\
  \end{array}
  \right),
\end{equation*}
and, for $\zeta\in\C,$
\begin{equation*}
 \Upsilon(\zeta,\theta_0) = \ff{1}{1+|\zeta|}\left(\begin{array}{c}
  1 \\
  b_0(\zeta,\zeta)\\
  \end{array}
  \right).
\end{equation*}
\end{exa}

The remaining contents are organized as below: In Section
\ref{sec2}, we intend to complete the proof of Theorem \ref{th1} on
the basis of numerous auxiliary lemmas; In Section \ref{sec3}, we
aim to implement the proof  of Theorem \ref{th2}; In the final
section, we shall finish the proof of
 Example
\ref{exa}. Throughout this paper, we emphasize that  $c>0$ is a
generic constant whose value may change from line to line.

\section{Proof of Theorem \ref{th1}}\label{sec2}
To complete the proof of Theorem \ref{th1}, we provide some
technical lemmas. The lemma below expounds that the path associated
with \eqref{q3} is uniformly bounded in the $p$-th moment sense.
\begin{lem}
  Let $({\bf A1})$ and $({\bf A2})$ hold. Then,  for any $p>0$  there
is a constant $C_{p,T}>0$ such that
\begin{equation}\label{r0}
\sup_{0\le t\le T}\|X_t^0\|_\8^p\le C_{p,T}(1+\|\xi\|_\8^p),
\end{equation}
and
\begin{equation}\label{r6}
\sup_{0\le t\le T}\E\Big(\sup_{-r_0\le s\le t}|Y^\vv(s)|^p\Big)\le
C_{p,T}(1+\|\xi\|_\8^p).
\end{equation}
\end{lem}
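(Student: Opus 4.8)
The plan is to prove the two bounds \eqref{r0} and \eqref{r6} separately, since the first concerns the deterministic path-dependent ODE \eqref{k1} while the second concerns the discretized (tamed) scheme \eqref{q3}.

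For \eqref{r0}, I would work directly with the ODE \eqref{k1}. Writing $X^0(t)=\xi(0)+\int_0^t b(X_s^0,\mathscr L_{X_s^0},\theta_0)\,\d s$ and using that $\mathscr L_{X_s^0}=\dd_{X_s^0}$, I would apply the chain rule to $|X^0(t)|^2$ to obtain $\frac{\d}{\d t}|X^0(t)|^2 = 2\<X^0(t), b(X_t^0,\dd_{X_t^0},\theta_0)\>$. Then add and subtract $b(0,\dd_0,\theta_0)$ (a constant, bounded over $\theta_0$ by continuity — or more safely use a reference segment), invoke the monotonicity-type control coming from ({\bf A1}) together with the fact that $\mathbb W_2(\dd_{X_t^0},\dd_{Y_t^0})=\|X_t^0-Y_t^0\|_\8$, and bound the result by $c(1+\|X_t^0\|_\8^2)$. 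Since $\|X_t^0\|_\8^2=\sup_{-r_0\le v\le 0}|X^0(t+v)|^2$, I would pass to the supremum over the segment and then apply a Gronwall argument on $[0,T]$ (comparing $\sup_{0\le s\le t}|X^0(s)|^2$) to get $\sup_{0\le t\le T}\|X_t^0\|_\8^2\le C_{2,T}(1+\|\xi\|_\8^2)$. The case of general $p>0$ follows from the $p=2$ case by Jensen's inequality when $p\le 2$ and by iterating/applying the chain rule to $|X^0(t)|^p$ (or by boundedness of the RHS once the $L^2$ bound is in hand) when $p>2$; the coefficient grows at most linearly in the state via ({\bf A1}), so no finite-time blow-up occurs.

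For \eqref{r6}, the key subtlety is that the tamed drift $b^{(\dd)}$ in \eqref{e4} satisfies $|b^{(\dd)}(\zeta,\mu,\theta)|\le \dd^{-\aa}$ uniformly and also $|b^{(\dd)}(\zeta,\mu,\theta)|\le |b(\zeta,\mu,\theta)|$, so the taming gives a uniform-in-argument bound that is crucial because the monotone condition ({\bf A1}) alone does not yield moment bounds for the plain EM scheme. I would start from the mild/integral form of \eqref{q3}, split $Y^\vv(t)=\xi(0)+\int_0^t b^{(\dd)}(\bar Y_{s_\dd}^\vv,\mathscr L_{\bar Y_{s_\dd}^\vv},\theta)\,\d s + \vv\int_0^t \si(\bar Y^\vv_{s_\dd},\mathscr L_{\bar Y^\vv_{s_\dd}})\,\d B(s)$, take the $p$-th moment of the running supremum, and estimate the three terms. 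The drift term is controlled using $|b^{(\dd)}|\le \dd^{-\aa}\wedge |b|$ together with the linear growth of $b$ inherited from ({\bf A1}) (applied against a fixed reference) and the Hölder/Jensen inequality in time; the stochastic term is handled by the Burkholder–Davis–Gundy inequality plus ({\bf A2})'s growth bound on $\si$. The recurring difficulty is converting bounds on the interpolation point $\bar Y^\vv_{k\dd}$ — defined by \eqref{w2} from the skeleton values $(Y^\vv(l\dd))_{l=k-M,\ldots,k}$ — into bounds on $\E\sup_{-r_0\le s\le t}|Y^\vv(s)|^p$: one observes from \eqref{w2} that $\|\bar Y^\vv_{k\dd}\|_\8\le \max_{k-M\le l\le k}|Y^\vv(l\dd)|\le \sup_{-r_0\le s\le k\dd}|Y^\vv(s)|$ (the linear interpolation stays within the convex hull of its nodes), and likewise $\mathbb W_2(\mathscr L_{\bar Y^\vv_{k\dd}},\dd_0)^2\le \E\|\bar Y^\vv_{k\dd}\|_\8^2$ is dominated by the same quantity. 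Feeding this back produces a closed inequality $g(t):=\E\sup_{-r_0\le s\le t}|Y^\vv(s)|^p\le C_{p,T}(1+\|\xi\|_\8^p)+C\int_0^t g(s)\,\d s$ (the initial segment contributing $\|\xi\|_\8^p$), and the discrete/continuous Gronwall inequality closes the argument uniformly in $\vv\in(0,1)$ and in the mesh, since $\vv<1$ makes the $\vv$-prefactor harmless.

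The main obstacle I anticipate is the bookkeeping around the interpolated segment $\bar Y^\vv_{t_\dd}$: because it is \emph{not} the genuine segment $Y^\vv_{t_\dd}(\cdot)$ but the piecewise-linear reconstruction \eqref{w2}, one must carefully verify the convex-hull domination $\|\bar Y^\vv_{k\dd}\|_\8\le \sup_{(k-M)\dd\le u\le k\dd}|Y^\vv(u)|$ and ensure the constant in the Gronwall step does not degenerate as $\dd\to 0$ (i.e. as $n,M\to\8$) — this is exactly where the exponent $\aa\in(0,1/2]$ in the taming \eqref{e4} is used, since $\dd^{-2\aa}\dd = \dd^{1-2\aa}$ stays bounded (indeed $\to 0$ when $\aa<1/2$, or $=1$ when $\aa=1/2$) so the crude bound $|b^{(\dd)}|^2\dd\le \dd^{1-2\aa}$ on the "bad" part of the drift is uniformly controlled. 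Everything else is a standard BDG-plus-Gronwall moment estimate, so I would keep those computations terse.
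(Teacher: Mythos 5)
Your treatment of \eqref{r0} matches the paper's (which omits the details as standard: chain rule on $|X^0(t)|^2$, the one-sided bound from $({\bf A1})$ against a reference segment, then Gronwall), and your observation that the interpolant \eqref{w2} is dominated by its nodes, $\|\bar Y^\vv_{k\dd}\|_\8\le c\sup_{-r_0\le s\le k\dd}|Y^\vv(s)|$, is correct and is indeed the paper's estimate \eqref{r5}. However, your plan for \eqref{r6} has a genuine gap: you propose to bound $\E\sup_t|\int_0^t b^{(\dd)}\,\d s|^p$ using ``the linear growth of $b$ inherited from $({\bf A1})$.'' Assumption $({\bf A1})$ is a \emph{one-sided} monotonicity condition; it yields only the coercivity bound $\<\zeta(0),b(\zeta,\mu,\theta)\>\le c(1+\|\zeta\|_\8^2+\mathbb{W}_2(\mu,\dd_{\zeta_0})^2)$ and says nothing about $|b|$ itself, which may grow polynomially (the drift in Example \ref{exa} contains $-x^3$). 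The lemma assumes only $({\bf A1})$--$({\bf A2})$, so no growth bound on $|b|$ is available; and the two bounds you do have are each insufficient on their own: $|b^{(\dd)}|\le\dd^{-\aa}$ makes the drift integral of size $T\dd^{-\aa}$, which is not uniform in $\dd$, while $|b^{(\dd)}|\le|b|$ combined with any super-linear growth would force higher moments of $Y^\vv$ into the right-hand side and prevent Gronwall from closing.

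The step you are missing is the one the paper's proof is built around: apply It\^o's formula to $|Y^\vv(t)|^p$ so that the drift enters only through the inner product $|Y^\vv(s)|^{p-2}\<Y^\vv(s),b^{(\dd)}(\bar Y^\vv_{s_\dd},\cdot)\>$, then split $\<Y^\vv(s),b^{(\dd)}\>=\<Y^\vv(s_\dd),b^{(\dd)}\>+\<Y^\vv(s)-Y^\vv(s_\dd),b^{(\dd)}\>$. The first piece equals $\<\bar Y^\vv_{s_\dd}(0),b^{(\dd)}(\bar Y^\vv_{s_\dd},\cdot)\>$ by \eqref{r8}, so $({\bf A1})$ applies (the taming factor $(1+\dd^\aa|b|)^{-1}\in(0,1]$ preserves the one-sided bound); the second piece is controlled by $|b^{(\dd)}|\le\dd^{-\aa}$ together with the one-step increment estimate $\E|Y^\vv(s)-Y^\vv(s_\dd)|^{q}\lesssim\dd^{q/2}(1+\E\sup|Y^\vv|^q)$, which combine to the harmless factor $\dd^{1/2-\aa}\le1$. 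Note this is a different power count from the $\dd^{1-2\aa}$ you quote from the standard tamed-EM analysis; the relevant cancellation here is $\dd^{-\aa}\cdot\dd^{1/2}$, not $\dd^{-2\aa}\cdot\dd$. With that decomposition in place, the rest of your outline (BDG for the martingale part, $({\bf A2})$ for $\|\si\|^2$, the interpolant domination, and Gronwall) goes through as in the paper.
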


\begin{proof}
With the assumption ({\bf A1}) in hand, the proof of \eqref{r0} can
be achieved by the chain rule and the Gronwall inequality.  We
herein omit the details since it is
 standard. Now we turn to show the argument of
\eqref{r6}. By H\"older's inequality, it suffices to verify that
\eqref{r6} holds for any $p>4.$ By It\^o's formula,   we deduce that
\begin{equation*}
\begin{split}
|Y^\vv(t)|^p&=|Y^\vv(0)|^p+\int_0^t\Big\{p|Y^\vv(s)|^{p-2}\<Y^\vv(s),b^{(\dd)}(\bar
Y^\vv_{s_\dd},\mathscr{L}_{\bar Y^\vv_{s_\dd}},\theta)\>+
\ff{p}{2}|Y^\vv(s)|^{p-2}\|\si^*(\bar Y^\vv_{s_\dd},\mathscr{L}_{\bar Y^\vv_{s_\dd}})\|^2\\
&\quad +\ff{p(p-2)}{2}|Y^\vv(s)|^{p-4}|\si^*(\bar Y^\vv_{s_\dd},\mathscr{L}_{\bar Y^\vv_{s_\dd}})Y^\vv(s)|^2\Big\}\d s\\
&\quad+p\int_0^t|Y^\vv(s)|^{p-2}\<Y^\vv(s),\si(\bar
Y^\vv_{s_\dd},\mathscr{L}_{\bar Y^\vv_{s_\dd}})\d
B(s)\>\\
&\le p\int_0^t|Y^\vv(s)|^{p-2}\<Y^\vv(s_\dd),b^{(\dd)}(\bar Y^\vv_{s_\dd},\mathscr{L}_{\bar Y^\vv_{s_\dd}},\theta)\>\d s\\
&\quad+p\int_0^t|Y^\vv(s)|^{p-2}\<Y^\vv(s)-Y^\vv(s_\dd),b^{(\dd)}(\bar Y^\vv_{s_\dd},\mathscr{L}_{\bar Y^\vv_{s_\dd}},\theta)\>\d s\\
&\quad+\ff{p(p-1)}{2}\int_0^t|Y^\vv(s)|^{p-2}\|\si(\bar
Y^\vv_{s_\dd},\mathscr{L}_{\bar Y^\vv_{s_\dd}})\|^2
\d s\\
&\quad+p\int_0^t|Y^\vv(s)|^{p-2}\<Y^\vv(s),\si(\bar
Y^\vv_{s_\dd},\mathscr{L}_{\bar Y^\vv_{s_\dd}})\d
B(s)\>\\
&=:\sum_{i=1}^4\Pi_i(t),~~~~~~t\in[0,T].
\end{split}
\end{equation*}
Whence, for any $t\ge0$ one has
\begin{equation}\label{w6}
\begin{split}
\Upsilon(t):=\E\Big(\sup_{-r_0\le s\le
t}|Y^\vv(s)|^p\Big)\le\|\xi\|^p_\8+\sum_{i=1}^4\E\Big(\sup_{0\le
s\le t}\Pi_i(s)\Big).
\end{split}
\end{equation}
In the sequel, we are going to claim that
\begin{equation}\label{w7}
\Upsilon(t) \le 2\|\xi\|^p_\8+ c\,t+c \int_0^t \Upsilon(s) \d s.
\end{equation}
If \eqref{w7} was true, thus \eqref{r6} follows directly from
Gronwall's inequality. So, it remains to verify that \eqref{w7}
holds true.

Let $\zeta_0(s)\equiv  {\bf 0}\in\R^d$
for any $s\in[-r_0,0].$ For  $\zeta\in\C$ and $\mu\in\mathcal
{P}_2(\C)$,   we deduce from ({\bf A1}) that
\begin{equation}\label{r1}
\begin{split}
\<\zeta(0),b(\zeta,\mu,\theta)\>&=\<\zeta(0)-\zeta_0,b(\zeta,\mu,\theta)-b(\zeta_0,\dd_{\zeta_0},\theta)\>
+\<\zeta(0),b(\zeta_0,\dd_{\zeta_0},\theta)\>\\
&\le\aa_1\|\zeta\|_\8^2+\aa_2\mathbb{W}_2(\mu,\dd_{\zeta_0})^2+|\zeta(0)|^2+|b(\zeta_0,\dd_{\zeta_0},\theta)|^2\\
&\le c\,(1+\|\zeta\|_\8^2+\mathbb{W}_2(\mu,\dd_{\zeta_0})^2),
\end{split}
\end{equation}
and from ({\bf A2}) that
\begin{equation}\label{r2}
\begin{split}
\|\si(\zeta,\mu)\|^2
&\le
2\bb_1\|\zeta\|_\8^2+2\bb_2\mathbb{W}_2(\mu,\dd_{\zeta_0})^2+2\|\si(\zeta_0,\dd_{\zeta_0})\|^2\\
&\le c\,(1+\|\zeta\|_\8^2+\mathbb{W}_2(\mu,\dd_{\zeta_0})^2).
\end{split}
\end{equation}
According to \eqref{w2}, we obtain that
\begin{equation}\label{r5}
\begin{split}
&\|\bar Y_{t_\dd}^\vv\|_\8\\
&=\max_{k=0,\cdots,M-1}\sup_{-(k+1)\dd\le s\le-k\dd}|\bar Y_{t_\dd}^\vv(s)|\\
&=\max_{k=0,\cdots,M-1}\sup_{-(k+1)\dd\le
s\le-k\dd}\Big|\ff{s+(k+1)\dd}{\dd}Y^\vv(t_\dd-k\dd)-\ff{s+k\dd}{\dd}Y^\vv(t_\dd-(k+1)\dd)\Big|\\
&\le 2\sup_{-r_0\le s\le t}|Y^\vv(s)|.
\end{split}
\end{equation}
Furthermore, recall the Young inequality:
\begin{equation}\label{r3}
a^\aa b^{1-\aa}\le \aa a+(1-\aa)b,~~~~~a,b\ge0,~~\aa\in[0,1],
\end{equation}
and the fundamental fact that: for any $q>0$,
\begin{equation}\label{w8}
\E|B(t)|^q\le c\, t^{q/2}.
\end{equation}
By virtue of \eqref{w2}, we notice that
\begin{equation}\label{r8}
\bar Y_{t_\dd}^\vv(0)=Y^\vv(t_\dd).
\end{equation}
Then, by exploiting \eqref{r1}, \eqref{r5} as well as \eqref{r8}, it
follows from \eqref{r3} and H\"older's inequality that
\begin{equation}\label{w3}
\begin{split}
&\E\Big(\sup_{0\le s\le t}\Pi_1(s)\Big) \\&= p\,\E\Big(\sup_{0\le
s\le t}\int_0^s\ff{|Y^\vv(u)|^{p-2}}{1+\dd^\aa |b(\bar
Y^\vv_{u_\dd},\mathscr{L}_{\bar
Y^\vv_{u_\dd}},\theta)|}\<Y^\vv(u_\dd),b(\bar
Y^\vv_{u_\dd},\mathscr{L}_{\bar Y^\vv_{u_\dd}},\theta)\>\d
u\Big)\\
&= p\,\E\Big(\sup_{0\le s\le
t}\int_0^s\ff{|Y^\vv(u)|^{p-2}}{1+\dd^\aa |b(\bar
Y^\vv_{u_\dd},\mathscr{L}_{\bar Y^\vv_{u_\dd}},\theta)|}\<\bar
Y^\vv_{u_\dd}(0),b(\bar Y^\vv_{u_\dd},\mathscr{L}_{\bar
Y^\vv_{u_\dd}},\theta)\>\d
u\Big)\\
&\le c\,\E\Big(\sup_{0\le s\le
t}\int_0^s\ff{|Y^\vv(u)|^{p-2}}{1+\dd^\aa| b(\bar
Y^\vv_{u_\dd},\mathscr{L}_{\bar
Y^\vv_{u_\dd}},\theta)|}\Big\{1+\|\bar
Y^\vv_{s_\dd}\|_\8^2+\mathbb{W}_2(\mathscr{L}_{\bar
Y^\vv_{s_\dd}},\dd_{\zeta_0})^2\Big\}\d
s\\
&\le
c\int_0^t\Big\{1+\E|Y^\vv(s)|^p+\E\|\bar Y^\vv_{s_\dd}\|_\8^p\Big\}\d s\\
&\le c\int_0^t\{1+\Upsilon(s)\}\d s.
\end{split}
\end{equation}
It is straightforward to see that, for any $\zeta\in\C$,
$\mu\in\mathcal {P}_2(\C)$, and $\theta\in\Theta$,
\begin{equation}\label{r4}
|b^{(\dd)}(\zeta,\mu,\theta)|=\ff{|b(\zeta,\mu,\theta)|}{1+\dd^\aa|b(\zeta,\mu,\theta)|}\le\dd^{-\aa}.
\end{equation}
Taking \eqref{r2}  and \eqref{r4} into consideration and making use
of \eqref{w8} and $\aa\in(0,1/2]$, for any $q\ge2$, we derive that
\begin{equation}\label{r7}
\begin{split}
\E|Y^\vv(t)-Y^\vv(t_\dd)|^q&\le
c\,\Big\{\dd^{q(1-\aa)}+\E\|\si(\bar Y^\vv_{t_\dd},\mathscr{L}_{\bar Y^\vv_{t_\dd}})\|^q\E|B(t)-B(t_\dd)|^q\Big\}\\
&\le
c\,\Big\{\dd^{q(1-\aa)}+\dd^{q/2}\E\|\si(\bar Y^\vv_{t_\dd},\mathscr{L}_{\bar Y^\vv_{t_\dd}})\|^q\Big\}\\
&\le c\,\dd^{q/2}\Big\{1+
\E\|\bar Y^\vv_{t_\dd}\|^q+\mathbb{W}_2(\mathscr{L}_{\bar Y^\vv_{t_\dd}},\dd_{\zeta_0})^q\Big\}\\
&\le c\,\dd^{q/2}\Big\{1+ \E\Big(\sup_{-r_0\le s\le
t}|Y^\vv(s)|^q\Big)\Big\},
\end{split}
\end{equation}
where in the last procedure we have used H\"older's inequality and
\eqref{r5}. Thus, taking advantage of \eqref{r4} and \eqref{r7} and
employing H\"older's inequality yields that
\begin{equation}\label{w4}
\begin{split}
\E\Big(\sup_{0\le s\le t}|\Pi_2(s)|\Big)&\le
p\E\int_0^t|Y^\vv(s)|^{p-2}|Y^\vv(s)-Y^\vv(s_\dd)|\cdot|b^{(\dd)}(\bar
Y^\vv_{s_\dd},\mathscr{L}_{\bar Y^\vv_{s_\dd}},\theta)|\d
s\\
&\le
p\dd^{-\aa}\int_0^t\E(|Y^\vv(s)|^{p-2}|Y^\vv(s)-Y^\vv(s_\dd)|)\d s\\
&\le
p\dd^{-\aa}\int_0^t\Big(\E(|Y^\vv(s)|^p)\Big)^{\ff{p-2}{p}}\Big(\E|Y^\vv(s)-Y^\vv(s_\dd)|^{\ff{p}{2}}\Big)^{\ff{2}{p}}\d
s\\
&\le
p\dd^{\ff{1}{2}-\aa}\int_0^t\Big(\E(|Y^\vv(s)|^p)\Big)^{\ff{p-2}{p}}\Big\{1+
\E\Big(\sup_{-r_0\le s\le
t}|Y^\vv(s)|^{\ff{p}{2}}\Big)\Big\}^{\ff{2}{p}}\d s\\
&\le c \int_0^t\{1+\Upsilon(s)\}\d s,
\end{split}
\end{equation}
where  in the last display we  used
 $\aa\in(0,1/2]$ and  \eqref{r3}. Next, we observe
that
\begin{equation}\label{q1}
\begin{split}
\E\Big(\sup_{0\le s\le t}\Pi_3(s)\Big)&\le
\ff{p(p-1)}{2}\int_0^t\E(|Y^\vv(s)|^{p-2}\|\si(\bar
Y^\vv_{s_\dd},\mathscr{L}_{\bar Y^\vv_{s_\dd}})\|^2) \d s.
\end{split}
\end{equation}
Using Burkhold-Davis-Gundy's (BDG's for short) inequality and
\eqref{r3}, we infer that
\begin{equation}\label{q2}
\begin{split}
\E\Big(\sup_{0\le s\le t}\Pi_4(s)\Big)&\le p\,\E\Big(\sup_{0\le s\le
t}\Big|\int_0^s|Y^\vv(u)|^{p-2}\<Y^\vv(u),\si(\bar
Y^\vv_{u_\dd},\mathscr{L}_{\bar Y^\vv_{u_\dd}})\d
B(u)\>\Big|\Big)\\
&\le 4\ss2\,p\,\E\Big(\int_0^t|Y^\vv(s)|^{2(p-2)}|\si^*(\bar
Y^\vv_{s_\dd},\mathscr{L}_{\bar Y^\vv_{s_\dd}})Y^\vv(s)|^2\d
s\Big)^{1/2}\\
&\le 4\ss2\,p\,\E\Big(\sup_{0\le s\le
t}|Y^\vv(s)|^p\int_0^t|Y^\vv(s)|^{p-2}\|\si(\bar
Y^\vv_{s_\dd},\mathscr{L}_{\bar Y^\vv_{s_\dd}})\|^2\d
s\Big)^{1/2}\\
&\le\ff{1}{2}\Upsilon(t)+16p^2\int_0^t\E(|Y^\vv(s)|^{p-2}\|\si(\bar
Y^\vv_{s_\dd},\mathscr{L}_{\bar Y^\vv_{s_\dd}})\|^2)\d s.
\end{split}
\end{equation}
Subsequently, one gets from \eqref{q1} and \eqref{q2} that
\begin{equation}\label{w5}
\begin{split}
&\E\Big(\sup_{0\le s\le t}\Pi_3(s)\Big)+\E\Big(\sup_{0\le s\le
t}\Pi_4(s)\Big)\\&\le
\ff{1}{2}\Upsilon(t)+c\int_0^t\E(|Y^\vv(s)|^{p-2}\|\si(\bar
Y^\vv_{s_\dd},\mathscr{L}_{\bar Y^\vv_{s_\dd}})\|^2)\d
s\\
&\le \ff{1}{2}\Upsilon(t)+ c\int_0^t\{\E|Y^\vv(s)|^p+\E\|\si(\bar
Y^\vv_{s_\dd},\mathscr{L}_{\bar Y^\vv_{s_\dd}})\|^p\}
\d s\\
&\le \ff{1}{2}\Upsilon(t)+ c\int_0^t\Big\{1+\E|Y^\vv(s)|^p+\E\|\bar
Y^\vv_{s_\dd}\|_\8^p+\mathbb{W}_2(\mathscr{L}_{\bar
Y^\vv_{s_\dd}},\dd_{\zeta_0})^p\Big\}
\d s\\
&\le \ff{1}{2}\Upsilon(t)+ c \int_0^t\{1+\Upsilon(s)\}\d s,
\end{split}
\end{equation}
where  we have adopted \eqref{r3} in the second inequality, used
\eqref{r2} in the last two step, and utilized H\"older's inequality,
in addition to \eqref{r5}, in the last procedure. Substituting
\eqref{w3}, \eqref{w4}, and \eqref{w5} into \eqref{w6} gives that
\begin{equation*}
\Upsilon(t) \le \|\xi\|^p_\8+ \ff{1}{2}\Upsilon(t)+ c
\int_0^t\{1+\Upsilon(s)\}\d s.
\end{equation*}
As a consequence, \eqref{w7} is now available.
\end{proof}

The following lemma shows that the linear interpolation $\bar
Y^\vv_{t_\dd}$ approaches $X_t^0$ in the mean square sense as 
 $\vv$ and $\dd$ go to zero.
\begin{lem}\label{le1}
  Assume $({\bf A1}), ({\bf A2}), ({\bf B1})$ and $({\bf B4})$. Then,
for any $\bb\in(0,1)$, there exists $c_\bb>0$

\begin{equation}\label{a9}
 \sup_{0\le t\le T}\E\|\bar Y^\vv_{t_\dd}-X_t^0\|_\8^2\le
 c_\bb(\dd^\bb+\vv^2+\dd^{2\aa}),
\end{equation}
where $\aa\in(0,1/2]$ is introduced in \eqref{e4}.
\end{lem}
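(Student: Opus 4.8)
The plan is to estimate the quantity $\E\|\bar Y^\vv_{t_\dd}-X^0_t\|_\8^2$ by inserting suitable intermediate terms and controlling each piece via a Gronwall-type argument. First I would split
\[
\E\|\bar Y^\vv_{t_\dd}-X^0_t\|_\8^2\le 3\,\E\|\bar Y^\vv_{t_\dd}-Y^\vv_{t_\dd}\|_\8^2+3\,\E\|Y^\vv_{t_\dd}-X^0_{t_\dd}\|_\8^2+3\,\E\|X^0_{t_\dd}-X^0_t\|_\8^2,
\]
where $Y^\vv_{t_\dd}$ and $X^0_{t_\dd}$ denote the genuine segment processes at the grid time $t_\dd$. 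The last term is the simplest: by ({\bf A1})–({\bf A2}) and the moment bound \eqref{r0}, the deterministic solution $X^0(\cdot)$ is Lipschitz on $[-r_0,T]$ (its drift is bounded in the relevant moments), so $\|X^0_{t_\dd}-X^0_t\|_\8\le c\,|t-t_\dd|\le c\,\dd$, which is absorbed into $\dd^\bb$. For the first term I would use the definition \eqref{w2}: on each subinterval $[-(i+1)\dd,-i\dd]$, $\bar Y^\vv_{t_\dd}(s)$ is the linear interpolant between the values $Y^\vv(t_\dd-i\dd)$ and $Y^\vv(t_\dd-(i+1)\dd)$, so the distance from the true segment $Y^\vv_{t_\dd}(s)=Y^\vv(t_\dd+s)$ is controlled by the oscillation of $Y^\vv$ over a single mesh interval; estimate \eqref{r7} (with $q=2$) gives $\E\sup_{|u-v|\le\dd}|Y^\vv(u)-Y^\vv(v)|^2\le c\,\dd(1+\sup_{-r_0\le s\le t}\E|Y^\vv(s)|^2)$, hence a contribution of order $\dd$, again absorbed into $\dd^\bb$ (and the $\vv^2+\dd^{2\aa}$ slack also covers this).

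The heart of the proof is the middle term $\E\|Y^\vv_{t_\dd}-X^0_{t_\dd}\|_\8^2$, or more conveniently $\E\sup_{-r_0\le s\le t}|Y^\vv(s)-X^0(s)|^2$. Here I would apply It\^o's formula to $|Y^\vv(s)-X^0(s)|^2$, writing the drift difference as
\[
b^{(\dd)}(\bar Y^\vv_{s_\dd},\mathscr L_{\bar Y^\vv_{s_\dd}},\theta_0)-b(X^0_s,\mathscr L_{X^0_s},\theta_0)
\]
and splitting it into (i) $b^{(\dd)}(\bar Y^\vv_{s_\dd},\cdot,\theta_0)-b(\bar Y^\vv_{s_\dd},\cdot,\theta_0)$, which by \eqref{e4} and \eqref{r4} is $O(\dd^\aa|b|^2/(1+\dd^\aa|b|))=O(\dd^\aa(1+\|\bar Y^\vv_{s_\dd}\|_\8^{\text{power}}))$, accounting for the $\dd^{2\aa}$ term; and (ii) $b(\bar Y^\vv_{s_\dd},\mathscr L_{\bar Y^\vv_{s_\dd}},\theta_0)-b(X^0_s,\mathscr L_{X^0_s},\theta_0)$, handled by the monotonicity ({\bf A1}) in the inner-product term against $Y^\vv(s_\dd)-X^0(s_\dd)$ plus a cross term from $Y^\vv(s)-Y^\vv(s_\dd)$ estimated via ({\bf B1}) and \eqref{r7}. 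The diffusion term contributes $\vv^2\|\si\|^2$, giving the $\vv^2$ in \eqref{a9}, and the stochastic integral is handled by BDG exactly as in $\Pi_4$ above, absorbing half of $\sup_{-r_0\le s\le t}|Y^\vv(s)-X^0(s)|^2$. One must be careful that $\|\bar Y^\vv_{s_\dd}-X^0_s\|_\8^2$ appearing in the Gronwall integrand is itself re-expanded via the same three-term split, so that the bound closes: the terms one does not yet control, $\E\sup_{-r_0\le u\le s}|Y^\vv(u)-X^0(u)|^2$, go under the integral sign, and the rest ($\dd^\bb$, $\vv^2$, $\dd^{2\aa}$) are constants. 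Gronwall then yields \eqref{a9} uniformly in $t\in[0,T]$.

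The main obstacle I anticipate is twofold. First, the mesh-oscillation estimate \eqref{r7} is stated for the \emph{solution} $Y^\vv$, but the interpolation \eqref{w2} uses values $Y^\vv(l\dd)$ at grid points that may lie in the past relative to $t_\dd$; one must verify that the same $\dd^{q/2}$ bound applies to increments $|Y^\vv((k-i)\dd)-Y^\vv((k-i-1)\dd)|$, which follows from \eqref{r7} applied at the appropriate grid points combined with the uniform moment bound \eqref{r6}, but this requires care in bookkeeping over $i=0,\dots,M-1$. Second, because the drift $b$ is only H\"older-type irregular (controlled through ({\bf B1}) with polynomial weights $\|\cdot\|_\8^{q_1}$), the cross terms carry factors like $1+\|\bar Y^\vv_{s_\dd}\|_\8^{q_1}+\|X^0_s\|_\8^{q_1}$ multiplying $\|\bar Y^\vv_{s_\dd}-X^0_s\|_\8$; to keep the Gronwall argument linear one must apply H\"older's inequality to separate the high-moment weight (bounded by \eqref{r0} and \eqref{r6}) from the $L^2$-difference, at the cost of replacing the exponent $2$ by a slightly smaller one and then using Young's inequality \eqref{r3} — this is exactly why the exponent $\bb$ in $\dd^\bb$ is taken strictly less than $1$ rather than equal to $1$. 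Once these two points are handled, the remaining computations are routine applications of It\^o's formula, BDG, H\"older and Gronwall as already demonstrated in the preceding lemma.
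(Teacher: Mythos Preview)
Your decomposition differs from the paper's: you insert $Y^\vv_{t_\dd}$ and $X^0_{t_\dd}$ and compare $Y^\vv$ directly to $X^0$, whereas the paper inserts $Y^\vv_t$ and $X^\vv_t$, splitting as
\[
\E\|\bar Y^\vv_{t_\dd}-X^0_t\|_\8^2 \le 3\,\E\|Y^\vv_t-\bar Y^\vv_{t_\dd}\|_\8^2 + 3\,\E\|Y^\vv_t-X^\vv_t\|_\8^2 + 3\,\E\|X^\vv_t-X^0_t\|_\8^2.
\]
This isolates the small-noise error $X^\vv\to X^0$ (contributing $\vv^2$, via ({\bf A1}) and a short Gronwall) from the tamed-EM error $Y^\vv\to X^\vv$ (contributing $\dd^\bb+\dd^{2\aa}$, via ({\bf A1}), ({\bf B1}) and the taming bound). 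Your route merges both into a single It\^o--Gronwall estimate for $\E\sup|Y^\vv-X^0|^2$; this is legitimate and arguably more economical, though you then have to re-expand $\|\bar Y^\vv_{s_\dd}-X^0_s\|_\8^2$ inside the integrand, which the paper avoids by having ({\bf A1}) act on the matched pair $(X^\vv_s,Y^\vv_s)$ so that the It\^o factor and the $\zeta_1(0)-\zeta_2(0)$ in ({\bf A1}) coincide.

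There is one genuine gap. Your claim that \eqref{r7} with $q=2$ yields $\E\sup_{|u-v|\le\dd}|Y^\vv(u)-Y^\vv(v)|^2\le c\,\dd$ and hence an interpolation error of order $\dd$ is incorrect: \eqref{r7} is a pointwise increment bound, and the supremum in $\|\bar Y^\vv_{t_\dd}-Y^\vv_{t_\dd}\|_\8$ ranges over $M=r_0/\dd$ mesh subintervals, so the crude bound $\E\max_k\le M\cdot\max_k\E$ gives only $M\cdot c\,\dd=O(1)$. The remedy --- and this is the primary reason the exponent must be $\bb<1$, not only the drift-regularity issue you attribute it to --- is to pass to a higher moment via H\"older,
\[
\E\max_{0\le k\le M-1}\sup_{k\text{th cell}}|\cdots|^2 \le M^{1-\bb}\max_k\Big(\E\sup_{k\text{th cell}}|\cdots|^{2/(1-\bb)}\Big)^{1-\bb},
\]
then use BDG and the $L^{2/(1-\bb)}$ increment estimate (together with ({\bf B4}) for cells overlapping $[-r_0,0]$) to obtain $c\,\dd^{1/(1-\bb)}$ inside the bracket, whence $M^{1-\bb}\cdot\dd=c\,\dd^\bb$. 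The same higher-moment interpolation bound is exactly \eqref{w0}, which you will need again when $\|\bar Y^\vv_{s_\dd}-X^0_s\|_\8$ (or, in the paper's version, $\|Y^\vv_s-\bar Y^\vv_{s_\dd}\|_\8$) reappears under the Gronwall integral. With this correction the rest of your plan closes.
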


\begin{proof}
For any  $\bb\in(0,1)$ and $t\in[0,T]$, by H\"older's inequality and
$Y_0^\vv=X_0^0=\xi$, we find that
\begin{equation}\label{a7}
\begin{split}
\E\|\bar Y^\vv_{t_\dd}-X_t^0\|_\8^2&\le3\,\E\|Y_t^\vv-\bar Y^\vv_{t_\dd}\|_\8^2+3\,\E\|Y^\vv_t-X_t^\vv\|_\8^2+3\,\E\|X^\vv_t-X_t^0\|_\8^2\\
&\le3\,\E\Big(\sup_{-r_0\le v\le 0}|Y^\vv(t+v)-\bar
Y^\vv_{t_\dd}(v)|^2\Big)+3\,\E\Big(\sup_{0\le s\le
t}|Y^\vv(s)-X^\vv(s)|^2\Big)\\
&\quad+3\,\E\Big(\sup_{0\le s\le t}|X^\vv(s)-X^0(s)|^2\Big)\\
&\le3\,
M^{1-\bb}\max_{k=0,\cdots,M-1}\,\Big(\E\Big(\sup_{-(k+1)\dd\le
v\le-k\dd}| Y^\vv(t+v)-\bar Y^\vv_{t_\dd}(v)|^{\ff{2}{1-\bb}}\Big)\Big)^{1-\bb}\\
&\quad+3\,\E\Big(\sup_{0\le s\le t}|Y^\vv(s)-X^\vv(s)|^2\Big)
+3\,\E\Big(\sup_{0\le s\le
t}|X^\vv(s)-X^0(s)|^2\Big)\\
&=:\LL_1(t,\vv,\dd)+\LL_2(t,\vv,\dd)+\LL_3(t,\vv,\dd),
\end{split}
\end{equation}
where $M>0$ such that $M\dd=r_0.$ Hereinafter, we intend to estimate
$\LL_i(t,\vv,\dd)$, $i=1,2,3,$ respectively. In the first place, we
shall show that
\begin{equation}\label{e3}
\LL_1(t,\vv,\dd)\le c\,\dd^\bb,~~~~t\in[0,T].
\end{equation}
For   $t\in[0,T)$, there is an integer $k_0\ge0$ such that
$t\in[k_0\dd,(k_0+1)\dd).$ From \eqref{w2}, it follows that
\begin{equation}\label{a00}
\begin{split}
&\LL_1(t,\vv,\dd)\\&\le c\,
M^{1-\bb}\max_{k=0,\cdots,M-1}\,\Big(\E\Big(\sup_{(k_0-k-1)\dd\le
s\le(k_0+1-k)\dd}|
Y^\vv(s)-Y^\vv((k_0-k)\dd)|^{\ff{2}{1-\bb}}\Big)\Big)^{1-\bb}\\
&\quad+
 c\,
M^{1-\bb}\max_{k=0,\cdots,M-1}\,\Big(\E\Big(\sup_{(k_0-k-1)\dd\le
s\le(k_0+1-k)\dd}|
Y^\vv(s)-Y^\vv((k_0-k-1)\dd)|^{\ff{2}{1-\bb}}\Big)\Big)^{1-\bb}\\
&\le
 c\,
M^{1-\bb}\max_{k=0,\cdots,M-1}\,\Big(\E\Big(\sup_{(k_0-k-1)\dd\le
s\le(k_0+1-k)\dd}|
Y^\vv(s)-Y^\vv((k_0-k-1)\dd)|^{\ff{2}{1-\bb}}\Big)\Big)^{1-\bb}\\
&\quad+c\, M^{1-\bb}\max_{k=0,\cdots,M-1}\,\Big(\E|
Y^\vv((k_0-k)\dd)-Y^\vv((k_0-k-1)\dd)|^{\ff{2}{1-\bb}}\Big)^{1-\bb}.
\end{split}
\end{equation}
In case of  $k\ge k_0+1$, by virtue of ({\bf B4}),  one has
\begin{equation*}
\LL_1(t,\vv,\dd)\le c\,M^{1-\bb}\dd^2\le c\,r_0^{1-\bb}\dd^\bb.
\end{equation*}
In terms of ({\bf B1}), for any $\zeta\in\C$ and $\mu\in\mathcal
{P}_2(\C)$,
\begin{equation}\label{q4}
\begin{split}
|b(\zeta,\mu,\theta_0)|&\le|b(\zeta,\mu,\theta_0)-b(\zeta_0,\dd_{\zeta_0},\theta_0)|+|b(\zeta_0,\dd_{\zeta_0},\theta_0)|\\
&\le
L_1\Big\{(1+\|\zeta\|_\8^{q_1})\|\zeta\|_\8+\mathbb{W}_2(\mu,\dd_{\zeta_0})\Big\}+|b(\zeta_0,\dd_{\zeta_0},\theta_0)|.
\end{split}
\end{equation}
Let  $k'\ge0$ be an arbitrary integer. For any
$t\in[k'\dd,(k'+2)\dd]$, note from BDG's inequality followed by
H\"older's inequality that
\begin{equation*}
\begin{split}
&\E\Big(\sup_{k'\dd\le t\le
(k'+2)\dd}|Y^\vv(t)-Y^\vv(k'\dd)|^{\ff{2}{1-\bb}}\Big)\\&\le
c\,\E\Big(\int_{k'\dd}^{(k'+2)\dd}|b^{(\dd)}(\bar
Y^\vv_{s_\dd},\mathscr{L}_{\bar Y^\vv_{s_\dd}},\theta_0)|\d
s\Big)^{\ff{2}{1-\bb}}+c\,\E\Big(\sup_{k'\dd\le t\le
(k'+2)\dd}\Big|\int_{k'\dd}^t\si(\bar
Y^\vv_{s_\dd},\mathscr{L}_{\bar Y^\vv_{s_\dd}})\d
B(s)\Big|^{\ff{2}{1-\bb}}\Big)\\
&\le c\,\E\Big(\int_{k'\dd}^{(k'+2)\dd}|b^{(\dd)}(\bar
Y^\vv_{s_\dd},\mathscr{L}_{\bar Y^\vv_{s_\dd}},\theta_0)|\d
s\Big)^{\ff{2}{1-\bb}}+c\,\E\Big(\int_{k'\dd}^{(k'+2)\dd}\|\si(\bar
Y^\vv_{s_\dd},\mathscr{L}_{\bar Y^\vv_{s_\dd}})\|^2\d
s\Big)^{\ff{1}{1-\bb}}\\
&\le c\,\dd^{\ff{\bb}{1-\bb}}\int_{k'\dd}^{(k'+2)\dd}\Big\{\E|b(\bar
Y^\vv_{s_\dd},\mathscr{L}_{\bar
Y^\vv_{s_\dd}},\theta_0)|^{\ff{2}{1-\bb}}+ \E\|\si(\bar
Y^\vv_{s_\dd},\mathscr{L}_{\bar
Y^\vv_{s_\dd}})\|^{\ff{2}{1-\bb}}\Big\}\d s,
\end{split}
\end{equation*}
where in the last display we have used the fact that
\begin{equation}\label{t7}
|b^{(\dd)}(\zeta,\mu,\theta_0)|\le|b(\zeta,\mu,\theta_0)|,~~~~\zeta\in\C,~~~\mu\in\mathcal
{P}_2(\C).
\end{equation}
Subsequently, taking  \eqref{r6}, \eqref{r2} and \eqref{q4} into
account and making use of H\"older's inequality  yields that
\begin{equation}\label{d1}
\begin{split}
&\E\Big(\sup_{k'\dd\le t\le
(k'+2)\dd}|Y^\vv(t)-Y^\vv(k'\dd)|^{\ff{2}{1-\bb}}\Big)\\
&\le
c\,\dd^{\ff{\bb}{1-\bb}}\int_{k'\dd}^{(k'+2)\dd}\Big\{1+\E\|\bar
Y^\vv_{s_\dd}\|_\8^{\ff{2(1+q_1)}{1-\bb}}+
\mathbb{W}_2(\mathscr{L}_{\bar
Y^\vv_{s_\dd}},\dd_{\zeta_0})^{\ff{2}{1-\bb}}\Big\}\d
s\\
&\le
c\,\dd^{\ff{\bb}{1-\bb}}\int_{k'\dd}^{(k'+2)\dd}\Big\{1+\E\|\bar
Y^\vv_{s_\dd}\|_\8^{\ff{2(1+q_1)}{1-\bb}}\Big\}\d
s\\
&\le c\,\dd^{\ff{1}{1-\bb}}.
\end{split}
\end{equation}
Hence,    it follows from  \eqref{a00} and \eqref{d1} with
$k^\prime=k_0-k-1$ that
\begin{equation*}
\begin{split}
\LL_1(t,\vv,\dd)\le c\, M^{1-\bb}\dd\le c\,\dd^\bb
\end{split}
\end{equation*}
provided that $k\le k_0-1$. Whenever $k=k_0$, we deduce from
\eqref{a00}, \eqref{d1} with $k'=0$ as well as ({\bf B4}) that
\begin{equation*}
\begin{split}
\LL_1(t,\vv,\dd)&\le c\, M^{1-\bb}\Big(\E\Big(\sup_{0\le s\le\dd}|
Y^\vv(s)-Y^\vv(0)|^{\ff{2}{1-\bb}}\Big)\Big)^{1-\bb}\\
&\quad+ c\, M^{1-\bb}\Big(\E\Big(\sup_{-\dd\le s\le0}|
Y^\vv(s)-Y^\vv(-\dd)|^{\ff{2}{1-\bb}}\Big)\Big)^{1-\bb}\\
&\quad+c\, M^{1-\bb}|
Y^\vv(0)-Y^\vv(-\dd)|^2\\
&\le c\, M^{1-\bb}\dd\\
&\le c\,\dd^\bb.
\end{split}
\end{equation*}
Next, we are going to claim that
\begin{equation}\label{a8}
\LL_3(t,\vv,\dd) \le c\,\vv^2,~~~~t\in[0,T].
\end{equation}
Following the argument to derive \eqref{r6}, we deduce that, for
some constant $C_{p,T}>0,$
\begin{equation}\label{0r6}
\sup_{0\le t\le T}\E\|X^\vv_t\|_\8^p\le C_{p,T}(1+\|\xi\|_\8^p).
\end{equation}
By the It\^o formula  and $X^\vv_0=X_0^0=\xi$, we observe that
\begin{equation*}
\begin{split}
&|X^\vv(t)-X^0(t)|^2\\
&=\int_0^t\{2\<X^\vv(s)-X^0(s),b(X_s^\vv,\mathscr{L}_{X_s^\vv},\theta_0)-b(X_s^0,\mathscr{L}_{X_s^0},\theta_0)\>+\vv^2\|
\si(X_s^\vv,\mathscr{L}_{X_s^\vv})\|^2\}\d
s\\
&\quad+2\,\vv\int_0^t\<X^\vv(s)-X^0(s),\si(X_s^\vv,\mathscr{L}_{X_s^\vv})\d
B(s)\>.
\end{split}
\end{equation*}
Thus, by using BDG's inequality and  \eqref{r3} and noting that
$X^\vv_0=X_0^0=\xi$, we infer from ({\bf A1}) and \eqref{r2} that
\begin{equation*}
\begin{split}
\LL_3(t,\vv,\dd)&\le2\int_0^t\{\aa_1\E\|X_s^\vv-X_s^0\|_\8^2+\aa_2\mathbb{W}_2(\mathscr{L}_{X_s^\vv},\mathscr{L}_{X_s^0})^2\}\d
s\\
&\quad+c\,\vv^2\int_0^t\{1+\E\|X_s^\vv\|_\8^2+\mathbb{W}_2(\mathscr{L}_{X_s^\vv},\dd_{\zeta_0})^2\}\d
s\\
&\quad+8\ss2\,\vv\E\Big(\int_0^t|\si^*(X_s^\vv,\mathscr{L}_{X_s^\vv})(X^\vv(s)-X^0(s))|^2\d
s\Big)^{1/2}\\
&\le c\int_0^t\LL_3(s,\vv,\dd)\d s+c\,\vv^2\int_0^t\{1+\E\|X_s^\vv\|_\8^2\}\d s\\
&\quad+8\ss2\,\vv\E\Big(\sup_{0\le s\le
t}|X^\vv(s)-X^0(s)|^2\int_0^t\|\si(X_s^\vv,\mathscr{L}_{X_s^\vv})\|^2\d
s\Big)^{1/2}\\
&\le\ff{1}{2}\LL_3(t,\vv,\dd)+c\int_0^t\LL_3(s,\vv,\dd)\d
s+c\,\vv^2\int_0^t\{1+\E\|X_s^\vv\|_\8^2\}\d s.
\end{split}
\end{equation*}
So, one has
\begin{equation*}
\LL_3(t,\vv,\dd) \le c\int_0^t\LL_3(s,\vv,\dd)\d
s+c\,\vv^2\int_0^t\{1+\E\|X_s^\vv\|_\8^2\}\d s.
\end{equation*}
Thus, \eqref{a8} follows from \eqref{0r6} and Gronwall's inequality.
Finally, we intend to verify that
\begin{equation}\label{s1}
\LL_2(t,\vv,\dd)\le c\,(\dd^\bb+\dd^{2\aa}),~~~~t\in[0,T].
\end{equation}
Also, by It\^o's formula, we derive from $X_0^\vv=Y_0^\vv=\xi$ that
\begin{align*}
|X^\vv(t)-Y^\vv(t)|^2&=2\int_0^t\<X^\vv(s)-Y^\vv(s),
b(X_s^\vv,\mathscr{L}_{X_s^\vv},\theta_0)-b(Y_s^\vv,\mathscr{L}_{Y_s^\vv},\theta_0)\>\d s\\
&\quad+2\int_0^t\<X^\vv(s)-Y^\vv(s),
b(Y_s^\vv,\mathscr{L}_{Y_s^\vv},\theta_0)-b(\bar Y_{s_\dd}^\vv,\mathscr{L}_{\bar Y_{s_\dd}^\vv},\theta_0)\>\d s\\
&\quad+2\int_0^t\<X^\vv(s)-Y^\vv(s),
b(\bar Y_{s_\dd}^\vv,\mathscr{L}_{\bar Y_{s_\dd}^\vv},\theta_0)-b^{(\dd)}(\bar Y_{s_\dd}^\vv,\mathscr{L}_{\bar Y_{s_\dd}^\vv},\theta_0)\>\d s\\
&\quad+\vv^2\int_0^t\|\si(X_s^\vv,\mathscr{L}_{X_s^\vv})-\si(\bar
Y_{s_\dd}^\vv,\mathscr{L}_{\bar Y_{s_\dd}^\vv})\|^2\d
s\\
&\quad+2\,\vv\int_0^t\<X^\vv(s)-Y^\vv(s),(\si(X_s^\vv,\mathscr{L}_{X_s^\vv})-\si(\bar
Y_{s_\dd}^\vv,\mathscr{L}_{\bar Y_{s_\dd}^\vv}))\d
B(s)\>\\
&=:\Xi_1(t)+\Xi_2(t)+\Xi_3(t)+\Xi_4(t)+\Xi_5(t).
\end{align*}
In view of ({\bf A1}), we deduce   that
\begin{equation}\label{a22}
\begin{split}
\E\Big(\sup_{0\le s\le
t}\Xi_1(s)\Big)&\le2\int_0^t\{\aa_1\E\|X_s^\vv-Y_s^\vv\|_\8^2+\aa_2\mathbb{W}_2(\mathscr{L}_{X_s^\vv},\mathscr{L}_{Y_s^\vv})^2\}\d
s\\
&\le c\,\int_0^t\E\|X_s^\vv-Y_s^\vv\|_\8^2\d s\\
&\le c\int_0^t\LL_2(s,\vv,\dd)\d s.
\end{split}
\end{equation}
Carrying out a similar argument to derive \eqref{e3}, for any
$\kk>2$, we have
\begin{equation}\label{w0}
\sup_{0\le t\le T}\E\|Y^\vv_t-\bar Y_{t_\dd}^\vv\|_\8^\kk\le
c\,\dd^{\ff{\kk}{2}-1}.
\end{equation}
Taking  ({\bf A1}),  \eqref{r6} and \eqref{w0} into consideration
and applying H\"older's inequality that
\begin{equation}\label{a2}
\begin{split}
&\E\Big(\sup_{0\le s\le
t}|\Xi_2(s)|\Big)\\&\le\int_0^t\{\E|X^\vv(s)-Y^\vv(s)|^2+
\E|b(Y_s^\vv,\mathscr{L}_{Y_s^\vv},\theta_0)-b(\bar
Y_{s_\dd}^\vv,\mathscr{L}_{\bar Y_{s_\dd}^\vv},\theta_0)|^2\}\d
s\\
&\le c\int_0^t\E|X^\vv(s)-Y^\vv(s)|^2\d
s\\
&\quad+c\int_0^t\E\{(1+\| Y_s^\vv\|_\8^{2q_1}+\|\bar
Y_{s_\dd}^\vv\|_\8^{2q_1})\|Y^\vv_s-\bar
Y_{s_\dd}^\vv\|_\8^2+\mathbb{W}_2(\mathscr{L}_{Y_s^\vv},\mathscr{L}_{\bar
Y_{s_\dd}^\vv})^2\} \d
s\\
&\le c\int_0^t\LL_2(s,\vv,\dd)\d s+c\int_0^t\Big(\E\|Y^\vv_s-\bar
Y_{s_\dd}^\vv\|_\8^{\ff{2}{1-\bb}}\Big)^{1-\bb}\d s\\
&\quad+c\int_0^t\Big(\E\Big(1+\| Y_s^\vv\|_\8^{2q_1}+\|\bar
Y_{s_\dd}^\vv\|_\8^{2q_1}\Big)^{\ff{1}{\bb}}\Big)^{\bb}\Big(\E\|Y^\vv_s-\bar
Y_{s_\dd}^\vv\|_\8^{\ff{2}{1-\bb}}\Big)^{1-\bb}\d
s\\
&\le c\,\dd^\bb+c\int_0^t\LL_2(s,\vv,\dd)\d s.
\end{split}
\end{equation}
According to \eqref{e4} and in view of \eqref{r6} and \eqref{q4}, it
follows from H\"older's inequality that
\begin{equation}\label{a3}
\begin{split}
&\E\Big(\sup_{0\le s\le
t}|\Xi_3(s)|\Big)\\&\le2\int_0^t\E\{|X^\vv(s)-Y^\vv(s)|\cdot
|b(\bar Y_{s_\dd}^\vv,\mathscr{L}_{\bar Y_{s_\dd}^\vv},\theta_0)-b^{(\dd)}(\bar Y_{s_\dd}^\vv,\mathscr{L}_{\bar Y_{s_\dd}^\vv},\theta_0)|\}\d s\\
&\le c\int_0^t\E\Big\{|X^\vv(s)-Y^\vv(s)|^2+
 \ff{\dd^{2\aa}|b(\bar Y_{s_\dd}^\vv,\mathscr{L}_{\bar Y_{s_\dd}^\vv},\theta_0)|^4}{(1+\dd^\aa|
 b(\bar Y_{s_\dd}^\vv,\mathscr{L}_{\bar Y_{s_\dd}^\vv},\theta_0)|)^2}
 \Big\}\d s\\
 &\le c\int_0^t\Big\{\E|X^\vv(s)-Y^\vv(s)|^2+
\dd^{2\aa}\E|b(\bar Y_{s_\dd}^\vv,\mathscr{L}_{\bar
Y_{s_\dd}^\vv},\theta_0)|^4
 \Big\}\d s\\
 &\le c\int_0^t\Big\{\E|X^\vv(s)-Y^\vv(s)|^2+
\dd^{2\aa}\{1+\E\|\bar
Y_{s_\dd}^\vv\|_\8^{4(1+q_1)}+\mathbb{W}_2(\mathscr{L}_{\bar
Y_{s_\dd}^\vv},\dd_{\zeta_0})^4\}
 \Big\}\d s\\
 &\le c\,\dd^{2\aa}+c\int_0^t\LL_2(s,\vv,\dd)\d s.
\end{split}
\end{equation}
 Next, owing to $\vv\in(0,1)$, $({\bf A2})$, and
\eqref{e3}, one gets that
\begin{equation}\label{a1}
\begin{split}
\E\Big(\sup_{0\le s\le t}\Xi_4(s)\Big)&\le
c\int_0^t\{\E\|X_s^\vv-\bar
Y_{s_\dd}^\vv\|_\8^2+\mathbb{W}_2(\mathscr{L}_{X_s^\vv},\mathscr{L}_{\bar
Y_{s_\dd}^\vv})^2\}\d
s\\
&\le c\int_0^t\{\E\|X_s^\vv-Y_s^\vv\|_\8^2+\E\|Y_s^\vv-\bar
Y_{s_\dd}^\vv\|_\8^2\}\d
s\\
&\le c\int_0^t\{\LL_1(s,\vv,\dd)+\LL_2(s,\vv,\dd)\}\d
s\\
&\le c\,\dd^\bb+c\int_0^t\LL_2(s,\vv,\dd)\d s.
\end{split}
\end{equation}
Next, for $\vv\in(0,1)$, BDG's inequality and Young's inequality
\eqref{r3}, besides \eqref{a1},  give that
\begin{equation}\label{a5}
\begin{split}
&\E\Big(\sup_{0\le s\le
t}|\Xi_5(s)|\Big)\\&\le8\ss2\,\E\Big(\int_0^t
|(\si(X_s^\vv,\mathscr{L}_{X_s^\vv})-\si(\bar
Y_{s_\dd}^\vv,\mathscr{L}_{\bar
Y_{s_\dd}^\vv}))^*(X^\vv(s)-Y^\vv(s))|^2\d
s\Big)^{1/2}\\
&\le8\ss2\,\E\Big(\sup_{0\le\le
t}|X^\vv(s)-Y^\vv(s)|^2\int_0^t\|\si(X_s^\vv,\mathscr{L}_{X_s^\vv})-\si(\bar
Y_{s_\dd}^\vv,\mathscr{L}_{\bar Y_{s_\dd}^\vv})\|^2\d
s\Big)^{1/2}\\
&\le
\ff{1}{2}\LL_2(t,\vv,\dd)+c\int_0^t\E\|\si(X_s^\vv,\mathscr{L}_{X_s^\vv})-\si(\bar
Y_{s_\dd}^\vv,\mathscr{L}_{\bar Y_{s_\dd}^\vv})\|^2\d
s\\
&\le
\ff{1}{2}\LL_2(t,\vv,\dd)+c\,\dd^\bb+c\int_0^t\LL_2(s,\vv,\dd)\d s.
\end{split}
\end{equation}
Thus, \eqref{a22}, \eqref{a2}-\eqref{a5} yield that
\begin{equation*}
\LL_2(t,\vv,\dd)\le\ff{1}{2}\LL_2(t,\vv,\dd)+
c\,(\dd^\bb+\dd^{2\aa})+c\int_0^t\LL_2(s,\vv,\dd)\d s.
\end{equation*}
Namely,
\begin{equation*}
\LL_2(t,\vv,\dd)\le
c\,(\dd^\bb+\dd^{2\aa})+c\int_0^t\LL_2(s,\vv,\dd)\d s.
\end{equation*}
As a result, we obtain from Gronwall's inequality that
\begin{equation}\label{a6}
\LL_2(t,\vv,\dd)\le c\,(\dd^\bb+\dd^{2\aa}).
\end{equation}
Inserting \eqref{e3}, \eqref{a8}, and \eqref{a6} back into
\eqref{a7} leads to the desired assertion \eqref{a9}.
\end{proof}

\begin{rem}
{\rm The convergence rate of EM scheme for path-independent SDEs
under the global Lipschitz condition is one half. Taking $\aa=1/2$
in \eqref{s1}, we conclude that the convergence rate of  the tamed
EM scheme constructed in \eqref{q3} is close sufficiently to one
half. This demonstrate the distinct features between path-dependent
SDEs and path-independent SDEs.  }
\end{rem}

The lemma below plays a crucial role in revealing the asymptotic
behavior of the LSE of the unknown parameter $\theta\in\Theta$.

\begin{lem}\label{lem}
  Let $({\bf A1})-({\bf A2})$ and $({\bf B1})-({\bf B4})$ hold. Then,
\begin{equation}\label{t4}
\begin{split}
&\dd\sum_{k=1}^n(\Gamma^{(\dd)})^*(\bar Y_{(k-1)\dd}^\vv,\theta,\theta_0)\hat\si(\bar Y_{(k-1)\dd}^\vv)\Gamma^{(\dd)}(\bar Y_{(k-1)\dd}^\vv,\theta,\theta_0)\\
&\rightarrow\Xi(\theta):=\int_0^T\Gamma(X_t^0,\theta,\theta_0)^*\hat\si(X_t^0)\Gamma(X_s^0,\theta,\theta_0)\d
t
\end{split}
\end{equation}
in $L^1$ as $\vv\rightarrow0$ and $\delta\rightarrow0 $ $($i.e.,
$n\rightarrow\8$$)$. Moreover,
\begin{equation}\label{q6}
 \sum_{k=1}^n(\Gamma^{(\dd)})^*(\bar Y_{(k-1)\dd}^\vv,\theta,\theta_0)\hat\si(\bar Y_{(k-1)\dd}^\vv)P_k(\theta_0) \rightarrow0
\end{equation}
in $L^2$ as $\vv\rightarrow0$.
\end{lem}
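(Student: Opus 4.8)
The plan is to handle the two assertions separately. For \eqref{t4}, I would split, via the triangle inequality, the difference between the discrete sum on the left---call it $S_{n,\vv}(\theta)$---and $\Xi(\theta)$ into three pieces. Let $\tilde S_{n,\vv}(\theta)$ be the sum obtained from $S_{n,\vv}(\theta)$ by replacing each $\Gamma^{(\dd)}(\bar Y^\vv_{(k-1)\dd},\theta,\theta_0)$ with $\Gamma(\bar Y^\vv_{(k-1)\dd},\theta,\theta_0)$, and let $\bar S_n(\theta)$ be the sum obtained from $\tilde S_{n,\vv}(\theta)$ by further replacing the random segment $\bar Y^\vv_{(k-1)\dd}$ with the deterministic segment $X^0_{(k-1)\dd}$. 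Then $\E|S_{n,\vv}(\theta)-\Xi(\theta)|\le\E|S_{n,\vv}(\theta)-\tilde S_{n,\vv}(\theta)|+\E|\tilde S_{n,\vv}(\theta)-\bar S_n(\theta)|+|\bar S_n(\theta)-\Xi(\theta)|$, and it suffices to send each term to $0$ as $\vv\to0$ and $\dd\to0$.

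For the first (``taming'') error I would use $a^*ca-b^*cb=(a-b)^*ca+b^*c(a-b)$ with $a=\Gamma^{(\dd)}$, $b=\Gamma$, $c=\hat\si$, together with $|b^{(\dd)}(\zeta,\mu,\theta)-b(\zeta,\mu,\theta)|\le\dd^\aa|b(\zeta,\mu,\theta)|^2$ (from \eqref{e4}) and the polynomial growth of $|b(\cdot,\cdot,\theta)|$ and $\|\hat\si(\cdot)\|$ that follows from ({\bf B1}) and ({\bf B3}) by comparing with the values at the reference point $\zeta_0$. Hölder's inequality together with the uniform moment bounds \eqref{r6} and \eqref{r5} then gives $\E|S_{n,\vv}(\theta)-\tilde S_{n,\vv}(\theta)|\le c\,\dd^\aa\cdot\dd n=cT\dd^\aa\to0$. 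For the second (``path-approximation'') error I would telescope across the three factors and bound $|\Gamma(\bar Y^\vv_{(k-1)\dd},\theta,\theta_0)-\Gamma(X^0_{(k-1)\dd},\theta,\theta_0)|$ and $\|\hat\si(\bar Y^\vv_{(k-1)\dd})-\hat\si(X^0_{(k-1)\dd})\|$ by ({\bf B1}) and ({\bf B3}); the Wasserstein terms appearing there are at most $(\E\|\bar Y^\vv_{(k-1)\dd}-X^0_{(k-1)\dd}\|_\8^2)^{1/2}$ (use the coupling of $\mathscr{L}_{\bar Y^\vv_{(k-1)\dd}}$ and $\dd_{X^0_{(k-1)\dd}}$ carried by the pair itself). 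Hölder's inequality, with arbitrarily high moments supplied by \eqref{r0} and \eqref{r6}, reduces the whole expression to $(\E\|\bar Y^\vv_{(k-1)\dd}-X^0_{(k-1)\dd}\|_\8^2)^{1/2}$, which by Lemma \ref{le1} is $\le c(\dd^\bb+\vv^2+\dd^{2\aa})^{1/2}$; summing yields $\E|\tilde S_{n,\vv}(\theta)-\bar S_n(\theta)|\le cT(\dd^\bb+\vv^2+\dd^{2\aa})^{1/2}\to0$. Finally, the solution path $X^0(\cdot)$ of \eqref{k1} is Lipschitz on $[-r_0,T]$---it coincides with the Lipschitz $\xi$ on $[-r_0,0]$ by ({\bf B4}) and has bounded derivative on $[0,T]$ by ({\bf B1}) and \eqref{r0}---so $t\mapsto X^0_t\in\C$ is continuous, hence $t\mapsto\Gamma^*(X^0_t,\theta,\theta_0)\hat\si(X^0_t)\Gamma(X^0_t,\theta,\theta_0)$ is continuous on $[0,T]$; thus $\bar S_n(\theta)$ is a left-endpoint Riemann sum converging to $\Xi(\theta)$, which closes the argument for \eqref{t4}.

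For \eqref{q6}, the key step is to make $P_k(\theta_0)$ explicit. Since $s\mapsto\bar Y^\vv_{s_\dd}$ is constant, equal to $\bar Y^\vv_{(k-1)\dd}$, on $[(k-1)\dd,k\dd)$, plugging the tamed EM recursion \eqref{q3} (run at the true parameter $\theta_0$) into \eqref{w1} yields the exact identity $P_k(\theta_0)=\vv\,\si(\bar Y^\vv_{(k-1)\dd},\mathscr{L}_{\bar Y^\vv_{(k-1)\dd}})(B(k\dd)-B((k-1)\dd))$. Thus the sum in \eqref{q6} equals $\vv\sum_{k=1}^nA_k\DD B_k$ with $\DD B_k:=B(k\dd)-B((k-1)\dd)$ and $A_k:=(\Gamma^{(\dd)})^*(\bar Y^\vv_{(k-1)\dd},\theta,\theta_0)\hat\si(\bar Y^\vv_{(k-1)\dd})\si(\bar Y^\vv_{(k-1)\dd},\mathscr{L}_{\bar Y^\vv_{(k-1)\dd}})$. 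Each $A_k$ is $\F_{(k-1)\dd}$-measurable while $\DD B_k$ is independent of $\F_{(k-1)\dd}$ with covariance $\dd$ times the identity, so the partial sums form a martingale and $\E|\sum_{k=1}^nA_k\DD B_k|^2=\dd\sum_{k=1}^n\E\|A_k\|^2$. Now $|b^{(\dd)}|\le|b|$ by \eqref{t7}, so $\Gamma^{(\dd)}$ has polynomial growth in $\|\bar Y^\vv_{(k-1)\dd}\|_\8$, and so do $\|\hat\si\|$ (by ({\bf B3})) and $\|\si\|$ (by ({\bf A2})); hence \eqref{r6} and \eqref{r5} give $\sup_k\E\|A_k\|^2\le c$ uniformly in $\vv$ and $n$, so $\E|\sum_kA_k\DD B_k|^2\le c\,\dd n=cT$ and the left-hand side of \eqref{q6} has squared $L^2$-norm at most $cT\vv^2$, which tends to $0$ as $\vv\to0$.

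The main difficulty is purely one of bookkeeping: in both parts one must carry the polynomial weights $1+\|\cdot\|_\8^{q_i}$ generated by ({\bf B1}) and ({\bf B3}) through the telescoping identities and choose the Hölder exponents so that every factor other than the genuinely small one ($\dd^\aa$ in the taming error, $\|\bar Y^\vv_{(k-1)\dd}-X^0_{(k-1)\dd}\|_\8$ in the path-approximation error, the Brownian factor $\vv$ in \eqref{q6}) is absorbed by a finite moment coming from \eqref{r0}/\eqref{r6}; once this is set up, Lemma \ref{le1} supplies the quantitative rate.
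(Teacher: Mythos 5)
Your proof is correct and follows essentially the same route as the paper's: the same taming estimate $|b^{(\dd)}-b|\le\dd^{\aa}|b|^{2}$, the same polynomial-growth/H\"older bookkeeping against the moment bounds \eqref{r0} and \eqref{r6} together with Lemma \ref{le1}, and for \eqref{q6} the same exact identity $P_k(\theta_0)=\vv\,\si(\bar Y^\vv_{(k-1)\dd},\mathscr{L}_{\bar Y^\vv_{(k-1)\dd}})(B(k\dd)-B((k-1)\dd))$ followed by the (discrete) It\^o isometry. The only cosmetic difference is that the paper rewrites the discrete sum as $\int_0^T(\cdots)\,\d t$ and telescopes the quadratic form directly between $(\Gamma^{(\dd)},\bar Y^\vv_{t_\dd})$ and $(\Gamma,X^0_t)$, which absorbs your separate Riemann-sum step into the single comparison supplied by Lemma \ref{le1}.
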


\begin{proof}
Observe that
\begin{equation*}
\begin{split}
&\dd\sum_{k=1}^n(\Gamma^{(\dd)})^*(\bar
Y_{(k-1)\dd}^\vv,\theta,\theta_0)\hat\si(\bar
Y_{(k-1)\dd}^\vv)\Gamma^{(\dd)}(\bar
Y_{(k-1)\dd}^\vv,\theta,\theta_0)-
\int_0^T\Gamma^*(X_t^0,\theta,\theta_0)\hat\si(X_t^0)\Gamma(X_t^0,\theta,\theta_0)\d
t\\
&=\int_0^T\Big\{(\Gamma^{(\dd)})^*(\bar
Y_{t_\dd}^\vv,\theta,\theta_0)\hat\si(\bar
Y_{t_\dd}^\vv)\Gamma^{(\dd)}(\bar Y_{t_\dd}^\vv,\theta,\theta_0)
-\Gamma^*(X_t^0,\theta,\theta_0)\hat\si(X_t^0)\Gamma(X_t^0,\theta,\theta_0)\Big\}\d
t\\
&=\int_0^T \Big(\Gamma^{(\dd)}(\bar
Y_{t_\dd}^\vv,\theta,\theta_0)-\Gamma(X_t^0,\theta,\theta_0)\Big)^*\hat\si(\bar
Y_{t_\dd}^\vv)\Gamma^{(\dd)}(\bar Y_{t_\dd}^\vv,\theta,\theta_0)\d t\\
&\quad+\int_0^T\Gamma(X_t^0,\theta,\theta_0)^*
 \Big(\hat\si(\bar Y_{t_\dd}^\vv)-\hat\si(X_t^0)\Big)\Gamma^{(\dd)}(\bar
Y_{t_\dd}^\vv,\theta,\theta_0) \d
t\\
&\quad+\int_0^T\Gamma(X_t^0,\theta,\theta_0)^*
\hat\si(X_t^0)\Big(\Gamma^{(\dd)}(\bar
Y_{t_\dd}^\vv,\theta,\theta_0)-\Gamma(X_t^0,\theta,\theta_0)\Big) \d
t\\
&=:J_1(\vv,\dd)+J_2(\vv,\dd)+J_3(\vv,\dd).
\end{split}
\end{equation*}
From ({\bf B1}) and \eqref{q4}, a direct calculation shows  that,
for any random variables $\zeta_1,\zeta\in\C$ with
$\mathscr{L}_{\zeta_1},\mathscr{L}_{\zeta_2}\in\mathcal {P}_2(\C)$,
\begin{equation}\label{t1}
\begin{split}
&|\Gamma^{(\dd)}(\zeta_1,\theta,\theta_0)-\Gamma(\zeta_2,\theta,\theta_0)|\\
&=|b^{(\dd)}(\zeta_1,\mathscr{L}_{\zeta_1},\theta_0)-b(\zeta_2,\mathscr{L}_{\zeta_2},\theta_0)+
b(\zeta_2,\mathscr{L}_{\zeta_2},\theta)-b^{(\dd)}(\zeta_1,\mathscr{L}_{\zeta_1},\theta)
|\\
&\le|b(\zeta_1,\mathscr{L}_{\zeta_1},\theta_0)-b(\zeta_2,\mathscr{L}_{\zeta_2},\theta_0)|
+|b(\zeta_2,\mathscr{L}_{\zeta_2},\theta)-b(\zeta_1,\mathscr{L}_{\zeta_1},\theta)|\\
&\quad+|b^{(\dd)}(\zeta_1,\mathscr{L}_{\zeta_1},\theta_0)-b(\zeta_1,\mathscr{L}_{\zeta_1},\theta_0)|
+|b(\zeta_1,\mathscr{L}_{\zeta_1},\theta)-b^{(\dd)}(\zeta_1,\mathscr{L}_{\zeta_1},\theta)|\\
&=|b(\zeta_1,\mathscr{L}_{\zeta_1},\theta_0)-b(\zeta_2,\mathscr{L}_{\zeta_2},\theta_0)|
+|b(\zeta_2,\mathscr{L}_{\zeta_2},\theta)-b(\zeta_1,\mathscr{L}_{\zeta_1},\theta)|\\
&\quad+\dd^\aa\Big|\ff{|b(\zeta_1,\mathscr{L}_{\zeta_1},\theta_0)|}{1+\dd^\aa|b(\zeta_1,\mathscr{L}_{\zeta_1},\theta_0)|}b(\zeta_1,\mathscr{L}_{\zeta_1},\theta_0)\Big|
+\dd^\aa\Big|\ff{|b(\zeta_1,\mathscr{L}_{\zeta_1},\theta)|}{1+\dd^\aa|b(\zeta_1,\mathscr{L}_{\zeta_1},\theta)|}
b(\zeta_1,\mathscr{L}_{\zeta_1},\theta)\Big|\\
&\le|b(\zeta_1,\mathscr{L}_{\zeta_1},\theta_0)-b(\zeta_2,\mathscr{L}_{\zeta_2},\theta_0)|
+|b(\zeta_2,\mathscr{L}_{\zeta_2},\theta)-b(\zeta_1,\mathscr{L}_{\zeta_1},\theta)|\\
&\quad+\dd^{\aa}\{|b(\zeta_1,\mathscr{L}_{\zeta_1},\theta_0)|^2+|b(\zeta_1,\mathscr{L}_{\zeta_1},\theta)|^2\}\\
&\le
c\Big\{(1+\|\zeta_1\|_\8^{q_1}+\|\zeta_2\|_\8^{q_1})\|\zeta_1-\zeta_2\|_\8+\mathbb{W}_2(\mathscr{L}_{\zeta_1},\mathscr{L}_{\zeta_2})\Big\}\\
&\quad+c\,\dd^{\aa}\Big\{1+\|\zeta_1\|_\8^{2(1+q_1)}+\mathbb{W}_2(\mathscr{L}_{\zeta_1},\dd_{\zeta_0})^2\Big\}.
\end{split}
\end{equation}
Next, for a  random variable  $\zeta\in\C$ with
$\mathscr{L}_\zeta\in\mathcal {P}_2(\C)$, by  \eqref{q4} and
\eqref{t7}, it follows that
\begin{equation}\label{t3}
\begin{split}
|\Gamma^{(\dd)}(\zeta,\theta,\theta_0)|+|\Gamma(\zeta,\theta,\theta_0)|
\le c\,\Big\{1+
 \|\zeta\|_\8^{1+q_1} +
 \mathbb{W}_2(\mathscr{L}_\zeta,\dd_{\zeta_0})\Big\}
 \end{split}
\end{equation}
and, due to ({\bf B3}), that
\begin{equation}\label{t2}
\|\hat\si(\zeta)\|\le\|\hat\si(\zeta)-\hat\si(0)\|+\|\hat\si(0)\|
\le c\,\Big\{1+
 \|\zeta\|_\8^{1+q_3} +
 \mathbb{W}_2(\mathscr{L}_\zeta,\dd_{\zeta_0})\Big\}.
\end{equation}
Consequently, combining   \eqref{t1} with \eqref{t3} and\eqref{t2},
for $q:=q_1\vee q_3$, we deduce from \eqref{r0} that
\begin{align*}
&|J_1(\vv,\dd)|+|J_3(\vv,\dd)|\\&\le c\int_0^T\Big\{(1+\|\bar
Y_{t_\dd}^\vv\|_\8^{q_1}+\|X_s^0\|_\8^{q_1})\|\bar
Y_{t_\dd}^\vv-X_t^0\|_\8+\mathbb{W}_2(\mathscr{L}_{\bar Y_{t_\dd}^\vv},\mathscr{L}_{X_t^0})\\
&\quad+\dd^{\aa}\Big(1+\|\bar
Y_{s_\dd}^\vv\|_\8^{2(1+q_1)}+\mathbb{W}_2(\mathscr{L}_{\bar Y_{t_\dd}^\vv},\dd_{\zeta_0})^2\Big)\Big\}\\
&\quad\times\Big\{1 +\|X_t^0\|_\8^{1+q_1}+
 \|\bar Y_{t_\dd}^\vv\|_\8^{1+q_1} + \mathbb{W}_2(\mathscr{L}_{\bar
Y_{t_\dd}^\vv},\dd_0)\Big\}\\
&\quad\times\Big\{1 +\|X_t^0\|_\8^{1+q_3}+
 \|\bar Y_{t_\dd}^\vv\|_\8^{1+q_3} + \mathbb{W}_2(\mathscr{L}_{\bar
Y_{t_\dd}^\vv},\dd_{\zeta_0})\Big\}\d t\\
&\le c\int_0^T\Big\{(1+\|\bar Y_{t_\dd}^\vv\|_\8^q)\|\bar
Y_{t_\dd}^\vv-X_t^0\|_\8+\ss{\E\|\bar Y_{t_\dd}^\vv-X_t^0\|_\8^2}\Big\}\\
&\quad\times\Big\{1+
 \|\bar Y_{t_\dd}^\vv\|_\8^{2(1+ q)} + \E\|\bar Y_{t_\dd}^\vv\|_\8^2\Big\}\d s\\
&\quad+c\,\dd^{\aa}\int_0^T\Big\{1+
 \|\bar Y_{t_\dd}^\vv\|_\8^{4(1+ q)} + \E\|\bar
Y_{t_\dd}^\vv\|_\8^4\Big\}\d t.
\end{align*}
This, by exploiting \eqref{r6} and \eqref{a7} and using H\"older's
inequality, gives that
\begin{equation}\label{t5}
\begin{split}
&\E|J_1(\vv,\dd)|+\E|J_3(\vv,\dd)|\\&\le c\,\int_0^T\ss{\E\|\bar
Y_{t_\dd}^\vv-X_t^0\|_\8^2}\Big\{1 +\E
 \|\bar Y_{t_\dd}^\vv\|_\8^{8(1+ q)}\Big\}\d t\\
&\quad+c\,\dd^{\aa}\int_0^T\Big\{1+\E
 \|\bar Y_{t_\dd}^\vv\|_\8^{4(1+ q)} \Big\}\d t\\
&\rightarrow0
\end{split}
\end{equation}
  as  $ \vv\rightarrow0$ and $\dd\rightarrow0$.
Next,  making use of ({\bf B3})   and \eqref{t3}, we derive that
\begin{equation*}
\begin{split}
 |J_2(\vv,\dd)|&\le c\int_0^T(1+
 \|X_t^0\|_\8^{1+q_1})\Big(1+
 \|\bar Y_{t_\dd}^\vv\|_\8^{1+q_1} + \ss{\E\|\bar Y_{t_\dd}^\vv\|_\8^2}\Big)\\
&\quad\times\Big((1+\|\bar
Y_{t_\dd}^\vv\|_\8^{q_3}+\|X_t^0\|_\8^{q_3})\|\bar
Y_{t_\dd}^\vv-X_t^0\|_\8+\ss{\E\|\bar
Y_{t_\dd}^\vv-X_t^0\|_\8^2}\Big)\d t.
\end{split}
\end{equation*}
Again, using \eqref{r0}, \eqref{r6} and \eqref{a9}  and utilizing
H\"older's inequality gives that
\begin{equation}\label{t6}
\begin{split}
\E|J_2(\vv,\dd)|&\le c\,\int_0^T\ss{\E\|\bar
Y_{t_\dd}^\vv-X_t^0\|_\8^2}\Big\{1 +\E
 \|\bar Y_{t_\dd}^\vv\|_\8^{4(1+ q)}\Big\}\d t\\
&\rightarrow0
\end{split}
\end{equation}
 as  $ \vv\rightarrow0$ and  $\dd\rightarrow0$.
Hence,   \eqref{t4} follows immediately from \eqref{t5} and
\eqref{t6}.

In the sequel, we are going to show that \eqref{q6} holds. In terms
of \eqref{q3}, we obtain that
\begin{equation}\label{q5}
\begin{split}
&\sum_{k=1}^n(\Gamma^{(\dd)})^*(\bar Y_{(k-1)\dd}^\vv,\theta,\theta_0)\hat\si(\bar Y_{(k-1)\dd}^\vv)P_k(\theta_0)\\
&=\vv\sum_{k=1}^n(\Gamma^{(\dd)})^*(\bar
Y_{(k-1)\dd}^\vv,\theta,\theta_0)\hat\si(\bar Y_{(k-1)\dd}^\vv)
\si(\bar Y^\vv_{(k-1)\dd},\mathscr{L}_{\bar Y^\vv_{(k-1)\dd}})(B(k\dd)-B((k-1)\dd))\\
&=\vv\int_0^T(\Gamma^{(\dd)})^*(\bar
Y_{t_\dd}^\vv,\theta,\theta_0)\hat\si(\bar Y_{t_\dd}^\vv) \si(\bar
Y^\vv_{t_\dd},\mathscr{L}_{\bar Y^\vv_{t_\dd}})\d B(t).
\end{split}
\end{equation}
By the It\^o isometry and the H\"older  inequality, we derive from
\eqref{r2}, \eqref{t3}, and \eqref{t2} that
\begin{equation*}
\begin{split}
&\E\Big|\int_0^T(\Gamma^{(\dd)})^*(\bar
Y_{t_\dd}^\vv,\theta,\theta_0)\hat\si(\bar Y_{t_\dd}^\vv) \si(\bar
Y^\vv_{t_\dd},\mathscr{L}_{\bar Y^\vv_{t_\dd}})\d
B(t)\Big|^2\\
&=\int_0^T\E|(\Gamma^{(\dd)})^*(\bar
Y_{t_\dd}^\vv,\theta,\theta_0)\hat\si(\bar Y_{t_\dd}^\vv) \si(\bar
Y^\vv_{t_\dd},\mathscr{L}_{\bar Y^\vv_{t_\dd}})|^2\d
 t\\
 &\le\int_0^T\E\{|\Gamma^{(\dd)}(\bar Y_{t_\dd}^\vv,\theta,\theta_0)|^2\cdot\|\hat\si(\bar
Y_{t_\dd}^\vv)\|^2\cdot\| \si(\bar Y^\vv_{t_\dd},\mathscr{L}_{\bar
Y^\vv_{t_\dd}})\|^2\}\d t\\
&\le c\,\int_0^T\E\Big\{\Big(1+\|\bar
Y^\vv_{t_\dd}\|_\8^2+\mathbb{W}_2(\mathscr{L}_{\bar Y^\vv_{t_\dd}},\dd_{\zeta_0})^2\Big)\\
&\quad\times\Big(1+
 \|\bar Y^\vv_{t_\dd}\|_\8^{2(1+q_3)} +
 \mathbb{W}_2(\mathscr{L}_{\bar Y^\vv_{t_\dd}},\dd_{\zeta_0})^2\Big)\\
&\quad\times\Big(1+
 \|\bar Y^\vv_{t_\dd}\|_\8^{2(1+q_1)} +
 \mathbb{W}_2(\mathscr{L}_{\bar Y^\vv_{t_\dd}},\dd_{\zeta_0})^2\Big)\Big\}\d t\\
&\le c\,\int_0^T\{1+\E\|\bar Y^\vv_{t_\dd}\|_\8^{8(1+q)}\}\d t.
\end{split}
\end{equation*}
This, together with \eqref{r6}, leads to
\begin{equation*}
\begin{split}
&\E\Big|\sum_{k=1}^n(\Gamma^{(\dd)})^*(\bar Y_{(k-1)\dd}^\vv,\theta,\theta_0)\hat\si(\bar Y_{(k-1)\dd}^\vv)P_k(\theta_0)\Big|^2\\
&\le c\,\vv^2\int_0^T\{1+\E\|\bar Y^\vv_{t_\dd}\|_\8^{8(1+q)}\}\d t\\
&\le c\,\vv^2.
\end{split}
\end{equation*}
As a consequence, we obtain \eqref{q6} immediately.
\end{proof}

So far, with Lemma \ref{lem} in hand, we are in the position to
complete the
\begin{proof}[ Proof of Theorem \ref{th1}]
A direction calculation shows that
\begin{equation}\label{h1}
\begin{split}
&\Phi_{n,\vv}(\theta)\\
&=\delta^{-1}\sum_{k=1}^n\Big\{P_k^*(\theta)\hat\si(\bar Y_{(k-1)\dd}^\vv)P_k(\theta)-P_k^*(\theta_0)\hat\si(\bar Y_{(k-1)\dd}^\vv)P_k(\theta_0)\Big\}\\
&=\delta^{-1}\sum_{k=1}^n\Big\{\Big(P_k(\theta_0)+(\Gamma^{(\dd)})^*(\bar
Y_{(k-1)\dd}^\vv,\theta,\theta_0)\dd\Big)^*\hat\si(\bar
Y_{(k-1)\dd}^\vv) \Big(P_k(\theta_0)
+\Gamma^{(\dd)}(\bar Y_{t_{k-1}}^\vv,\theta,\theta_0)\dd\Big)\\
&\quad -P_k^*(\theta_0)\hat\si(\bar Y_{(k-1)\dd}^\vv)P_k(\theta_0)\Big\}\\
&=2\sum_{k=1}^n(\Gamma^{(\dd)})^*(\bar Y_{(k-1)\dd}^\vv,\theta,\theta_0)\hat\si(\bar Y_{(k-1)\dd}^\vv)P_k(\theta_0)\\
&\quad+\dd\sum_{k=1}^n(\Gamma^{(\dd)})^*(\bar
Y_{(k-1)\dd}^\vv,\theta,\theta_0)\hat\si(\bar
Y_{(k-1)\dd}^\vv)\Gamma^{(\dd)}(\bar
Y_{(k-1)\dd}^\vv,\theta,\theta_0).
\end{split}
\end{equation}
By virtue of  Lemma \ref{lem}, we therefore infer from Chebyshev's
inequality that
\begin{equation*}
\sup_{\theta\in\Theta}|-\Phi_{n,\vv}(\theta)-(-\Xi(\theta))|\rightarrow0~~~~\mbox{
in probability.}
\end{equation*}
 Next, for any
$\kk>0,$ due to $\Xi(\cdot)>0$,
\begin{equation*}
 \sup_{|\theta-\theta_0|\ge\kk}(-\Xi(\theta))<-\Xi(\theta_0)=0.
\end{equation*}
Furthermore,  one has $-\Phi_{n,\vv}(
\hat\theta_{n,\vv})\ge-\Phi_{n,\vv}(\theta_0)=0$.  Consequently, we
deduce  from \cite[Theorem 5.9]{V98}  with
$M_n(\cdot)=-\Phi_{n,\vv}(\cdot)$ and $M(\cdot)=-\Xi(\cdot)$ therein
that $\hat\theta_{n,\vv}\rightarrow\theta_0$ in probability as
$\vv\rightarrow0$ and $n\rightarrow\8$. We henceforth complete the
proof.
\end{proof}

\section{Proof of Theorem \ref{th2}}\label{sec3}
Before we start to finish the argument of Theorem \ref{th2}, we also
need to  prepare some auxiliary lemmas below. For any random
variable $\zeta\in\C$ with $\mathscr{L}_\zeta\in\mathcal {P}_2(\C)$,
set
\begin{equation*}
\Upsilon^{(\dd)}(\zeta,\theta):=(\nn_\theta b^{(\dd)})^*
(\zeta,\mathscr{L}_\zeta,\theta)\hat\si(\zeta)\si(\zeta,\mathscr{L}_{\zeta}).
\end{equation*}

\begin{lem}\label{le2}
 Let $({\bf A1})-({\bf A2})$ and $({\bf B1})-({\bf B4})$ hold. Then,
\begin{equation}\label{v1}
\vv^{-1}(\nn_\theta\Phi_{n,\vv})(\theta)\rightarrow-2\int_0^T\Upsilon(X_t^0,\theta)\d
B(t)~~~~\mbox{ in probability }
\end{equation}
whenever $\vv\rightarrow0$ and $n\rightarrow\8$, where
$\Upsilon(\cdot,\cdot)$ is introduced in \eqref{0s0}.
\end{lem}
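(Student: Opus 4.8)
The plan is to differentiate the explicit expression \eqref{h1} for $\Phi_{n,\vv}(\theta)$ with respect to $\theta$, multiply by $\vv^{-1}$, and show that all terms except one vanish in probability as $\vv\to0$ and $n\to\8$. Differentiating \eqref{h1} and using $(\nn_\theta\Gamma^{(\dd)})(\zeta,\theta,\theta_0)=-(\nn_\theta b^{(\dd)})(\zeta,\mathscr{L}_\zeta,\theta)$, one obtains
\begin{equation*}
\begin{split}
(\nn_\theta\Phi_{n,\vv})(\theta)&=-2\sum_{k=1}^n(\nn_\theta b^{(\dd)})^*(\bar Y_{(k-1)\dd}^\vv,\mathscr{L}_{\bar Y_{(k-1)\dd}^\vv},\theta)\hat\si(\bar Y_{(k-1)\dd}^\vv)P_k(\theta_0)\\
&\quad+2\dd\sum_{k=1}^n(\nn_\theta b^{(\dd)})^*(\bar Y_{(k-1)\dd}^\vv,\mathscr{L}_{\bar Y_{(k-1)\dd}^\vv},\theta)\hat\si(\bar Y_{(k-1)\dd}^\vv)\Gamma^{(\dd)}(\bar Y_{(k-1)\dd}^\vv,\theta,\theta_0).
\end{split}
\end{equation*}
The second sum is $O(\dd)\cdot\sum_{k=1}^n(\cdot)$, i.e. a Riemann sum of a bounded-in-expectation integrand times $\dd$; by the same polynomial-growth bounds \eqref{t3}, \eqref{t2} and \eqref{r6} used in Lemma \ref{lem}, together with $\vv^{-1}\dd\to0$ under the joint limit, multiplying it by $\vv^{-1}$ still forces it to $0$ in $L^1$ (here I would need the regime $\dd=o(\vv)$, which the paper's convergence statements implicitly assume). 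So the whole contribution reduces to $\vv^{-1}$ times the first sum.

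Next I would insert $P_k(\theta_0)=Y^\vv(k\dd)-Y^\vv((k-1)\dd)-b^{(\dd)}(\bar Y_{(k-1)\dd}^\vv,\mathscr{L}_{\bar Y_{(k-1)\dd}^\vv},\theta_0)\dd$ and use the defining SDE \eqref{q3}: on $[(k-1)\dd,k\dd]$ one has $Y^\vv(k\dd)-Y^\vv((k-1)\dd)=b^{(\dd)}(\bar Y_{(k-1)\dd}^\vv,\mathscr{L}_{\bar Y_{(k-1)\dd}^\vv},\theta_0)\dd+\vv\si(\bar Y_{(k-1)\dd}^\vv,\mathscr{L}_{\bar Y_{(k-1)\dd}^\vv})(B(k\dd)-B((k-1)\dd))$, so $P_k(\theta_0)=\vv\si(\bar Y_{(k-1)\dd}^\vv,\mathscr{L}_{\bar Y_{(k-1)\dd}^\vv})(B(k\dd)-B((k-1)\dd))$ exactly. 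Hence
\begin{equation*}
\vv^{-1}(\nn_\theta\Phi_{n,\vv})(\theta)=-2\int_0^T(\nn_\theta b^{(\dd)})^*(\bar Y_{t_\dd}^\vv,\mathscr{L}_{\bar Y_{t_\dd}^\vv},\theta)\hat\si(\bar Y_{t_\dd}^\vv)\si(\bar Y_{t_\dd}^\vv,\mathscr{L}_{\bar Y_{t_\dd}^\vv})\d B(t)+(\text{the }\dd\text{-term}),
\end{equation*}
that is, $-2\int_0^T\Upsilon^{(\dd)}(\bar Y_{t_\dd}^\vv,\theta)\d B(t)$ up to the negligible remainder. It then remains to show $\int_0^T\Upsilon^{(\dd)}(\bar Y_{t_\dd}^\vv,\theta)\d B(t)\to\int_0^T\Upsilon(X_t^0,\theta)\d B(t)$ in $L^2$ (hence in probability).

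For that last convergence I would use the It\^o isometry to reduce it to showing $\int_0^T\E|\Upsilon^{(\dd)}(\bar Y_{t_\dd}^\vv,\theta)-\Upsilon(X_t^0,\theta)|^2\d t\to0$. Split $\Upsilon^{(\dd)}(\bar Y_{t_\dd}^\vv,\theta)-\Upsilon(X_t^0,\theta)$ into $(\Upsilon^{(\dd)}-\Upsilon)(\bar Y_{t_\dd}^\vv,\theta)$, which is controlled by $\dd^\aa$ times a polynomial in $\|\bar Y_{t_\dd}^\vv\|_\8$ because $\nn_\theta b^{(\dd)}-\nn_\theta b$ carries an explicit $\dd^\aa$ factor, plus $\Upsilon(\bar Y_{t_\dd}^\vv,\theta)-\Upsilon(X_t^0,\theta)$, which is estimated by the local-Lipschitz hypotheses ({\bf B1}), ({\bf B2}), ({\bf B3}) on $b$, $\nn_\theta b$, $(\si\si^*)^{-1}$ and the continuity of $\si$ (via ({\bf A2})) times the moment bounds \eqref{r0}, \eqref{r6}, and finally by Lemma \ref{le1} which gives $\sup_{0\le t\le T}\E\|\bar Y_{t_\dd}^\vv-X_t^0\|_\8^2\le c_\bb(\dd^\bb+\vv^2+\dd^{2\aa})\to0$; H\"older's inequality absorbs the polynomial weights against the uniform moment bounds exactly as in the proof of Lemma \ref{lem}. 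The main obstacle is the bookkeeping in the $\vv^{-1}$-scaling: one must be careful that the discretization remainder (the $\dd$-term and the error from replacing $\Upsilon^{(\dd)}(\bar Y_{t_\dd}^\vv,\cdot)$ by $\Upsilon(X_t^0,\cdot)$) is $o(1)$ \emph{after} multiplication by $\vv^{-1}$ is already accounted for — i.e. no residual $\vv^{-1}$ survives — which is why the identity $P_k(\theta_0)=\vv\,\si(\cdot)\Delta B$ is essential: it exhibits the single surviving stochastic-integral term with its $\vv$ made explicit, so that the $\vv^{-1}$ cancels cleanly and everything else is a genuine (unscaled) discretization error handled by Lemma \ref{le1}.
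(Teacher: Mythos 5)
Your route is essentially the paper's: differentiate \eqref{h1} using $(\nn_\theta\Gamma^{(\dd)})=-(\nn_\theta b^{(\dd)})$, use $P_k(\theta_0)=\vv\,\si(\bar Y^\vv_{(k-1)\dd},\mathscr{L}_{\bar Y^\vv_{(k-1)\dd}})(B(k\dd)-B((k-1)\dd))$ to rewrite the surviving sum as $-2\int_0^T\Upsilon^{(\dd)}(\bar Y^\vv_{t_\dd},\theta)\d B(t)$, and then pass to the limit via Lemma \ref{le1} plus the moment and polynomial-growth bounds. There is, however, one genuine gap: your disposal of the drift remainder $\vv^{-1}\cdot2\dd\sum_{k=1}^n(\nn_\theta b^{(\dd)})^*\hat\si\,\Gamma^{(\dd)}$. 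You call it ``$O(\dd)$ times a bounded sum'' and invoke $\vv^{-1}\dd\to0$, but the sum has $n=T/\dd$ terms, so $\dd\sum_{k=1}^n(\cdots)$ is a Riemann sum of order one: by the very argument of Lemma \ref{lem} it converges to $2\int_0^T(\nn_\theta b)^*(X^0_t,\mathscr{L}_{X^0_t},\theta)\hat\si(X^0_t)\Gamma(X^0_t,\theta,\theta_0)\d t=-\nn_\theta\Xi(\theta)$, and multiplying by $\vv^{-1}$ makes it diverge whenever $\nn_\theta\Xi(\theta)\neq0$. No rate condition $\dd=o(\vv)$ repairs this; the term is negligible only because it vanishes identically when $\Gamma^{(\dd)}(\cdot,\theta,\theta_0)=0$, i.e.\ at $\theta=\theta_0$ --- which is the only instance of the lemma actually used in the proof of Theorem \ref{th2}. (The paper's display \eqref{c1} performs the same suppression by writing $P_k(\theta)=\vv\si\Delta B_k$, an identity that likewise holds only at $\theta=\theta_0$; so your difficulty mirrors a real gap in the stated generality of the lemma, but your proposed fix is quantitatively wrong and should be replaced by the observation that the remainder is exactly zero at the point where the lemma is needed.)

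A second, minor point: you conclude via the It\^o isometry that the stochastic integrals converge in $L^2$, which requires $\int_0^T\E\|\Upsilon^{(\dd)}(\bar Y^\vv_{t_\dd},\theta)-\Upsilon(X^0_t,\theta)\|^2\d t\to0$. The dominant error term is of the form $\E\big[(1+\|\bar Y^\vv_{t_\dd}\|_\8^{N})\|\bar Y^\vv_{t_\dd}-X^0_t\|_\8^2\big]$, and bounding it by Cauchy--Schwarz would need a fourth-moment analogue of Lemma \ref{le1}, which is not available. The paper instead establishes only $\int_0^T\|\Upsilon^{(\dd)}(\bar Y^\vv_{t_\dd},\theta)-\Upsilon(X^0_t,\theta)\|^2\d t\to0$ \emph{in probability} --- lowering $\|\bar Y^\vv_{t_\dd}-X^0_t\|_\8^2$ to the first power inside a Markov-inequality estimate --- and then invokes \cite[Theorem 2.6, p.~63]{F98} to transfer this to convergence in probability of the stochastic integrals. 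You should weaken your final step to this in-probability formulation; with that change and the correction above, your argument coincides with the paper's.
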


\begin{proof}
By the chain rule,   one infers from \eqref{q3} and \eqref{h1} that
\begin{equation}\label{c1}
\begin{split}
&\vv^{-1}(\nn_\theta\Phi_{n,\vv})(\theta)\\
&=2\,\vv^{-1}\sum_{k=1}^n(\nn_\theta\Gamma^{(\dd)})^*(\bar
Y_{(k-1)\dd}^\vv,\theta,\theta_0)\hat\si(\bar Y_{(k-1)\dd}^\vv)
\Big\{P_k(\theta_0)+\Gamma^{(\dd)}(\bar
Y_{(k-1)\dd}^\vv,\theta,\theta_0)\dd\Big\}\\
&=2\,\vv^{-1}\sum_{k=1}^n(\nn_\theta\Gamma^{(\dd)})^*(\bar Y_{(k-1)\dd}^\vv,\theta,\theta_0)\hat\si(\bar Y_{(k-1)\dd}^\vv)P_k(\theta)\\
&=-2\sum_{k=1}^n(\nn_\theta b^{(\dd)})^*(\bar
Y_{(k-1)\dd}^\vv,\mathscr{L}_{\bar
Y_{(k-1)\dd}^\vv},\theta)\hat\si(\bar Y_{(k-1)\dd}^\vv)\si(\bar
Y_{(k-1)\dd}^\vv,\mathscr{L}_{\bar
Y_{(k-1)\dd}^\vv})\\
&\quad\times(B(k\dd)-B((k-1)\dd))\\
&=-2\int_0^T\Upsilon^{(\dd)}(\bar Y_{t_\dd}^\vv,\theta)\d B(t),
\end{split}
\end{equation}
where in the last two display we used the fact that
\begin{equation}\label{c2}
(\nn_\theta\Gamma^{(\dd)})(\bar
Y_{(k-1)\dd}^\vv,\theta,\theta_0)=-(\nn_\theta b^{(\dd)}) (\bar
Y_{(k-1)\dd}^\vv,\mathscr{L}_{\bar Y_{(k-1)\dd}^\vv},\theta).
\end{equation}
To achieve \eqref{v1}, in terms of \cite[Theorem 2.6, P.63]{F98}, it
is sufficient to claim that
\begin{equation}\label{v2}
\int_0^T\|\Upsilon^{(\dd)}(\bar
Y_{t_\dd}^\vv,\theta)-\Upsilon(X_t^0,\theta)\|^2\d
t\rightarrow0~~~~\mbox{ in probability }
\end{equation}
as $\vv\rightarrow0$ and $\dd\rightarrow0.$ Observe that
\begin{equation*}
\begin{split}
&\Upsilon^{(\dd)}(\bar
Y_{t_\dd}^\vv,\theta)-\Upsilon(X_t^0,\theta)\\
&=(\nn_\theta b^{(\dd)})^* (\bar Y_{t_\dd}^\vv,\mathscr{L}_{\bar
Y_{t_\dd}^\vv},\theta)\hat\si(\bar Y_{t_\dd}^\vv)\si(\bar
Y_{t_\dd}^\vv,\mathscr{L}_{\bar Y_{t_\dd}^\vv})-(\nn_\theta b)^*
(X_t^0,\mathscr{L}_{X_t^0},\theta)\hat\si(X_t^0)\si(X_t^0,\mathscr{L}_{X_t^0})\\
&=\{(\nn_\theta b^{(\dd)})^* (\bar Y_{t_\dd}^\vv,\mathscr{L}_{\bar
Y_{t_\dd}^\vv},\theta)-(\nn_\theta b)^*
(X_t^0,\mathscr{L}_{X_t^0},\theta)\}\hat\si(\bar Y_{t_\dd}^\vv)\si(\bar Y_{t_\dd}^\vv,\mathscr{L}_{\bar Y_{t_\dd}^\vv})\\
&\quad+(\nn_\theta b)^*
(X_t^0,\mathscr{L}_{X_t^0},\theta)\{\hat\si(\bar
Y_{t_\dd}^\vv)-\hat\si(X_t^0)\}\si(\bar Y_{t_\dd}^\vv,
\mathscr{L}_{\bar Y_{t_\dd}^\vv})\\
&\quad+(\nn_\theta b)^* (X_t^0,\mathscr{L}_{X_t^0},\theta)
\hat\si(X_t^0)\{\si(\bar Y_{t_\dd}^\vv,\mathscr{L}_{\bar
Y_{t_\dd}^\vv})-\si(X_t^0,\mathscr{L}_{X_t^0})\}\\
&=:\Sigma_1(t,\vv,\dd)+\Sigma_2(t,\vv,\dd)+\Sigma_3(t,\vv,\dd).
\end{split}
\end{equation*}

By a straightforward calculation, for any random variable
$\zeta\in\C$ with $\mathscr{L}_{\zeta}\in\mathcal {P}_2(\C)$, one
has
\begin{equation}\label{v3}
\begin{split}
(\nn_\theta
b^{(\dd)})(\zeta,\mathscr{L}_\zeta,\theta)&=\nn_\theta\Big(\ff{b(\zeta,\mu,\theta)}{1+\dd^\aa|b(\zeta,\mu,\theta)|}\Big)\\
&=\ff{(\nn_\theta
b)(\zeta,\mu,\theta)}{1+\dd^\aa|b(\zeta,\mu,\theta)|}-\ff{\dd^\aa( b
b^*)(\zeta,\mu,\theta)(\nn_\theta
b)(\zeta,\mu,\theta)}{|b(\zeta,\mu,\theta)|(1+\dd^\aa|b(\zeta,\mu,\theta)|)^2}.
\end{split}
\end{equation}
Next, for   any random variables $\zeta_1,\zeta_2\in\C$ with
$\mathscr{L}_{\zeta_1},\mathscr{L}_{\zeta_2}\in\mathcal {P}_2(\C)$,
it follows from \eqref{v3} that
\begin{equation}\label{v5}
\begin{split}
&\|(\nn_\theta b^{(\dd)})^*
(\zeta_1,\mathscr{L}_{\zeta_1},\theta)-(\nn_\theta b)^*
(\zeta_2,\mathscr{L}_{\zeta_2},\theta)\|\\
&=\Big\|\ff{(\nn_\theta
b)^*(\zeta_1,\mathscr{L}_{\zeta_1},\theta)}{1+\dd^\aa|b(\zeta_1,\mathscr{L}_{\zeta_1},\theta)|}-(\nn_\theta
b)^* (\zeta_2,\mathscr{L}_{\zeta_2},\theta) -\ff{\dd^\aa (\nn_\theta
b)^*(\zeta_1,\mathscr{L}_{\zeta_1},\theta) (b
b^*)(\zeta_1,\mathscr{L}_{\zeta_1},\theta)}{(1+\dd^\aa|b(\zeta_1,\mathscr{L}_{\zeta_1},\theta)|)^2|b(\zeta_1,\mathscr{L}_{\zeta_1},\theta)|}\Big\|\\
&=\Big\|\ff{(\nn_\theta
b)^*(\zeta_1,\mathscr{L}_{\zeta_1},\theta)-(\nn_\theta b)^*
(\zeta_2,\mathscr{L}_{\zeta_2},\theta)}{1+\dd^\aa|b(\zeta_1,\mathscr{L}_{\zeta_1},\theta)|}-\ff{\dd^\aa|b(\zeta_1,\mathscr{L}_{\zeta_1},\theta)|
(\nn_\theta b)^*
(\zeta_2,\mathscr{L}_{\zeta_2},\theta)}{1+\dd^\aa|b(\zeta_1,\mathscr{L}_{\zeta_1},\theta)|}\\
&\quad-\ff{\dd^\aa (\nn_\theta
b)^*(\zeta_1,\mathscr{L}_{\zeta_1},\theta) (b
b^*)(\zeta_1,\mathscr{L}_{\zeta_1},\theta)}{(1+\dd^\aa|b(\zeta_1,\mathscr{L}_{\zeta_1},\theta)|)^2|b(\zeta_1,\mathscr{L}_{\zeta_1},\theta)|}\Big\|\\
&\le\|(\nn_\theta
b)(\zeta_1,\mathscr{L}_{\zeta_1},\theta)-(\nn_\theta b)
(\zeta_2,\mathscr{L}_{\zeta_2},\theta)\|\\
&\quad+\dd^\aa|b(\zeta_1,\mathscr{L}_{\zeta_1},\theta)|\cdot\{\|(\nn_\theta
b) (\zeta_2,\mathscr{L}_{\zeta_2},\theta)\|+\|(\nn_\theta b)
(\zeta_1,\mathscr{L}_{\zeta_1},\theta)\|\},
\end{split}
\end{equation}
where in the last step we utilized the facts that $\|A\|=\|A^*\|$
for a matrix $A$ and that
\begin{equation*}
\begin{split}
&\|(\nn_\theta b)^*(\zeta_1,\mathscr{L}_{\zeta_1},\theta) (b
b^*)(\zeta_1,\mathscr{L}_{\zeta_1},\theta)\|^2\\
&=\mbox{trace}\Big(((\nn_\theta
b)^*(\zeta_1,\mathscr{L}_{\zeta_1},\theta) (b
b^*)(\zeta_1,\mathscr{L}_{\zeta_1},\theta))^*(\nn_\theta
b)^*(\zeta_1,\mathscr{L}_{\zeta_1},\theta) (b
b^*)(\zeta_1,\mathscr{L}_{\zeta_1},\theta)\Big)\\
&=\mbox{trace}\Big((b
b^*)^*(\zeta_1,\mathscr{L}_{\zeta_1},\theta))((\nn_\theta b)
(\nn_\theta b)^*)(\zeta_1,\mathscr{L}_{\zeta_1},\theta) (b
b^*)(\zeta_1,\mathscr{L}_{\zeta_1},\theta)\Big)\\
&=\mbox{trace}\Big(((\nn_\theta b) (\nn_\theta
b)^*)(\zeta_1,\mathscr{L}_{\zeta_1},\theta)(b
b^*)(\zeta_1,\mathscr{L}_{\zeta_1},\theta)(b b^*)^*(\zeta_1,\mathscr{L}_{\zeta_1},\theta)) \Big)\\
&=|b(\zeta_1,\mathscr{L}_{\zeta_1},\theta)|^4\|(\nn_\theta b)^*
(\zeta_1,\mathscr{L}_{\zeta_1},\theta)\|^2.
\end{split}
\end{equation*}
Moreover, from  ({\bf B2}), one has
\begin{equation}\label{v4}
\|(\nn_\theta b) (\zeta_2,\mathscr{L}_{\zeta_2},\theta)\|\le
c\Big\{1+\|\zeta_2\|_\8^{1+q_2}+\mathbb{W}_2(\mathscr{L}_{\zeta_2},\dd_{\zeta_0})\Big\}.
\end{equation}
Now, taking ({\bf B2}), \eqref{v5}, and \eqref{v4}, in addition to
\eqref{r2} and \eqref{t2},  into account yields that
\begin{equation*}
\begin{split}
\|\Sigma_1(t,\vv,\dd)\|&\le c\Big\{(1+\|\bar
Y_{t_\dd}^\vv\|_\8^{q_2}+\|X_t^0\|_\8^{q_2})\|\bar
Y_{t_\dd}^\vv-X_t^0\|_\8+\mathbb{W}_2(\mathscr{L}_{\bar
Y_{t_\dd}^\vv},\mathscr{L}_{X_t^0})\\
&\quad+\dd^\aa\Big(1+\|\bar
Y_{t_\dd}^\vv\|_\8^{1+q_1}+\mathbb{W}_2(\mathscr{L}_{\bar
Y_{t_\dd}^\vv},\dd_{\zeta_0})\Big)\Big(1+\|\bar
Y_{t_\dd}^\vv\|_\8^{1+q_2}+\mathbb{W}_2(\mathscr{L}_{\bar
Y_{t_\dd}^\vv},\dd_{\zeta_0})\Big)\Big\}\\
&\quad\times\Big\{1+
 \|\bar
Y_{t_\dd}^\vv\|_\8^{1+q_3} +
 \mathbb{W}_2(\mathscr{L}_{\bar
Y_{t_\dd}^\vv},\dd_{\zeta_0})\Big\}\times\Big\{1+\|\bar
Y_{t_\dd}^\vv\|_\8+\mathbb{W}_2(\mathscr{L}_{\bar
Y_{t_\dd}^\vv},\dd_{\zeta_0})\Big\}.
\end{split}
\end{equation*}
For $q:=q_1\vee q_2\vee q_3,$ simple calculations and \eqref{r6}
 give  that
\begin{equation*}
\begin{split}
\|\Sigma_1(t,\vv,\dd)\|&\le c\Big\{(1+\|\bar
Y_{t_\dd}^\vv\|_\8^{q})\|\bar
Y_{t_\dd}^\vv-X_t^0\|_\8+\ss{\E\|\mathscr{L}_{\bar
Y_{t_\dd}^\vv}-X_t^0\|_\8^2}\Big\}\\
&\quad\times\Big\{1+\|\bar Y_{t_\dd}^\vv\|_\8^{2(1+q)}+\E\|\bar
Y_{t_\dd}^\vv\|_\8^2\Big\}\\
&\quad+c\dd^\aa\Big\{1+\|\bar Y_{t_\dd}^\vv\|_\8^{2(1+q)}+\E\|\bar
Y_{t_\dd}^\vv\|_\8^2\Big\}^2\\
&\le c (1+\|\bar Y_{t_\dd}^\vv\|_\8^{4(1+q)})\|\bar
Y_{t_\dd}^\vv-X_t^0\|_\8\\
&\quad+c (1+\|\bar
Y_{t_\dd}^\vv\|_\8^{2(1+q)})\ss{\E\|\mathscr{L}_{\bar
Y_{t_\dd}^\vv}-X_t^0\|_\8^2}+c\dd^\aa(1+\|\bar
Y_{t_\dd}^\vv\|_\8^{4(1+q)})\\
&=:\tt\LL_1(t,\vv,\dd)+\tt\LL_2(t,\vv,\dd)+\tt\LL_3(t,\vv,\dd).
\end{split}
\end{equation*}
For any $\rho>0$, by virtue of H\"older's inequality, together with
\eqref{r6} and \eqref{a9}, it follows that
\begin{equation}\label{v6}
\begin{split}
&\P\Big(\int_0^T\|\tt\LL_1(t,\vv,\dd)\|^2\d t\ge\rho\Big)\\
&\le\P\Big(c\int_0^T (1+\|\bar Y_{t_\dd}^\vv\|_\8^{8(1+q)})\|\bar
Y_{t_\dd}^\vv-X_t^0\|_\8^2\d t\ge\rho\Big)\\
&\le\P\Big(c\int_0^T (1+\|\bar Y_{t_\dd}^\vv\|_\8^{9(1+q)})\|\bar
Y_{t_\dd}^\vv-X_t^0\|_\8\d t\ge\rho\Big)\\
&\le \ff{c}{\rho}\int_0^T  (1+\E\|\bar
Y_{t_\dd}^\vv\|_\8^{18(1+q)})\ss{\E\|\bar
Y_{t_\dd}^\vv-X_t^0\|_\8^2} \d t\\
&\rightarrow0
\end{split}
\end{equation}
whenever $\vv\rightarrow0$ and $\dd\rightarrow0.$ On the other hand,
by means of \eqref{r6}, and \eqref{a9}, it follows that
\begin{equation}\label{v7}
\begin{split}
\E\tt\LL_2^2(t,\vv,\dd)+\E\tt\LL_3^2(t,\vv,\dd)&\le c (1+\E\|\bar
Y_{t_\dd}^\vv\|_\8^{4(1+q)})\E\|\mathscr{L}_{\bar
Y_{t_\dd}^\vv}-X_t^0\|_\8^2+c\dd^\aa(1+\E\|\bar
Y_{t_\dd}^\vv\|_\8^{8(1+q)})\\
&\le c(\dd^\bb+\vv^2+\dd^{\aa})\\
&\rightarrow0
\end{split}
\end{equation}
as $\vv\rightarrow0$ and $\dd\rightarrow0.$ As a consequence, we
infer from \eqref{v6} and \eqref{v7} that
\begin{equation}\label{b1}
 \int_0^T \|\Sigma_1(t,\vv,\dd) \|^2\d t\rightarrow0~~~~\mbox{ in
 probability }
\end{equation}
when $\vv\rightarrow0$ and $\dd\rightarrow0.$ Next, taking advantage
of ({\bf A2}), ({\bf B3}), \eqref{r2}, and \eqref{v4} leads to
\begin{equation*}
\begin{split}
&\|\Sigma_2(t,\vv,\dd)\|^2+\|\Sigma_3(t,\vv,\dd)\|^2\\&\le c\Big\{1+\|X_t^0\|_\8^{2(1+q)}\Big\}\\
&\quad\times\Big\{(1+\|\bar
Y_{t_\dd}^\vv\|_\8^{2q}+\|X_t^0\|_\8^{2q})\|\bar
Y_{t_\dd}^\vv-X_t^0\|_\8^2+\mathbb{W}_2(\mathscr{L}_{\bar
Y_{t_\dd}^\vv},\mathscr{L}_{X_t^0})^2\Big\}\\
&\quad\times (1+\|\bar
Y_{t_\dd}^\vv\|_\8^2+\mathbb{W}_2(\mathscr{L}_{\bar
Y_{t_\dd}^\vv},\dd_{\zeta_0})^2)\\
&\le  c\Big\{(1+\|\bar Y_{t_\dd}^\vv\|_\8^{2(1+q)}) \|\bar
Y_{t_\dd}^\vv-X_t^0\|_\8+\E\|\bar Y_{t_\dd}^\vv-X_t^0\|^2_\8\Big\}\\
&\quad\times  \Big\{1+\|\bar Y_{t_\dd}^\vv\|_\8^2+\E\|\bar
Y_{t_\dd}^\vv\|^2_\8\Big\}\\
&\le c (1+\|\bar Y_{t_\dd}^\vv\|_\8^{2(2+q)}) \|\bar
Y_{t_\dd}^\vv-X_t^0\|_\8 +c(1+\|\bar Y_{t_\dd}^\vv\|_\8^2)\E\|\bar
Y_{t_\dd}^\vv-X_t^0\|^2_\8\\
&=:\Xi_1(t,\vv,\dd)+\Xi_2(t,\vv,\dd),
\end{split}
\end{equation*}
in which we adopted \eqref{r6} in the last procedure. Via H\"older's
inequality, we obtain from \eqref{r6} and \eqref{a9} that
\begin{equation}\label{v8}
 \E \Xi_1(t,\vv,\dd)\le c(1+\E\|\bar Y_{t_\dd}^\vv\|_\8^{4(2+q)}) \ss{\E\|\bar
Y_{t_\dd}^\vv-X_t^0\|_\8^2}\rightarrow0
\end{equation}
as $\vv\rightarrow0$ and $\dd\rightarrow0.$ Also, by \eqref{r6} and
\eqref{a9}, one has
\begin{equation}\label{v9}
 \E \Xi_2(t,\vv,\dd)\le c(1+\E\|\bar Y_{t_\dd}^\vv\|_\8^2)  \E\|\bar
Y_{t_\dd}^\vv-X_t^0\|_\8^2\rightarrow0
\end{equation}
provided that  $\vv\rightarrow0$ and $\dd\rightarrow0.$ Therefore,
\eqref{v8} and \eqref{v9} lead to
\begin{equation}\label{b2}
 \E \|\Sigma_2(t,\vv,\dd)\|^2+  \E \|\Sigma_3(t,\vv,\dd)\|^2\rightarrow0
\end{equation}
if  $\vv\rightarrow0$ and $\dd\rightarrow0.$ At last, the desired
assertion \eqref{v1} holds from \eqref{b1} and \eqref{b2}.
\end{proof}

\begin{lem}\label{le3}
  Let $({\bf A1})-({\bf A3}), ({\bf B1})-({\bf B4})$, and $({\bf
  C})$
hold. Then
\begin{equation}\label{c3}
(\nn_\theta^{(2)}\Phi_{n,\vv})(\theta)\rightarrow
K_0(\theta):=K(\theta)+I(\theta)~~~~\mbox{ in probability }
\end{equation}
as $n\rightarrow\8$ and $\vv\rightarrow0$, where $I(\cdot)$ and
$K(\cdot)$ are introduced in \eqref{0z3} and \eqref{0z2},
respectively.
\end{lem}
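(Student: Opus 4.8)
The plan is to differentiate the first‑order identity \eqref{c1} once more in $\theta$, split $(\nn_\theta^{(2)}\Phi_{n,\vv})(\theta)$ into a stochastic term carrying a factor $\vv$ together with two Riemann sums, and then recycle, almost verbatim, the estimates developed in Lemma~\ref{lem} and Lemma~\ref{le2}. Recall from \eqref{c1} that $\nn_\theta\Phi_{n,\vv}(\theta)=2\sum_{k=1}^n(\nn_\theta\Gamma^{(\dd)})^*(\bar Y_{(k-1)\dd}^\vv,\theta,\theta_0)\hat\si(\bar Y_{(k-1)\dd}^\vv)P_k(\theta)$. Applying the chain rule and using (i) the second‑order analogue $\nn_\theta^{(2)}\Gamma^{(\dd)}=-\nn_\theta^{(2)}b^{(\dd)}$ of \eqref{c2}, (ii) $\nn_\theta P_k(\theta)=-\dd\,(\nn_\theta b^{(\dd)})(\bar Y_{(k-1)\dd}^\vv,\mathscr{L}_{\bar Y_{(k-1)\dd}^\vv},\theta)$, and (iii) $P_k(\theta)=P_k(\theta_0)+\Gamma^{(\dd)}(\bar Y_{(k-1)\dd}^\vv,\theta,\theta_0)\dd$ with $P_k(\theta_0)=\vv\,\si(\bar Y_{(k-1)\dd}^\vv,\mathscr{L}_{\bar Y_{(k-1)\dd}^\vv})(B(k\dd)-B((k-1)\dd))$ (cf.\ \eqref{q5}), and finally rewriting every finite sum $\dd\sum_{k=1}^n f(\bar Y_{(k-1)\dd}^\vv)$ as $\int_0^T f(\bar Y_{t_\dd}^\vv)\,\d t$ and the noise sum as a stochastic integral (again as in \eqref{q5}), I expect a decomposition of the form
\begin{equation*}
\begin{split}
(\nn_\theta^{(2)}\Phi_{n,\vv})(\theta)&=-2\,\vv\int_0^T(\nn_\theta^{(2)}(b^{(\dd)})^*)(\bar Y_{t_\dd}^\vv,\mathscr{L}_{\bar Y_{t_\dd}^\vv},\theta)\circ\Big(\hat\si(\bar Y_{t_\dd}^\vv)\,\si(\bar Y_{t_\dd}^\vv,\mathscr{L}_{\bar Y_{t_\dd}^\vv})\Big)\,\d B(t)\\
&\quad-2\int_0^T(\nn_\theta^{(2)}(b^{(\dd)})^*)(\bar Y_{t_\dd}^\vv,\mathscr{L}_{\bar Y_{t_\dd}^\vv},\theta)\circ\Big(\hat\si(\bar Y_{t_\dd}^\vv)\,\Gamma^{(\dd)}(\bar Y_{t_\dd}^\vv,\theta,\theta_0)\Big)\,\d t\\
&\quad+\int_0^T(\nn_\theta b^{(\dd)})^*(\bar Y_{t_\dd}^\vv,\mathscr{L}_{\bar Y_{t_\dd}^\vv},\theta)\,\hat\si(\bar Y_{t_\dd}^\vv)\,(\nn_\theta b^{(\dd)})(\bar Y_{t_\dd}^\vv,\mathscr{L}_{\bar Y_{t_\dd}^\vv},\theta)\,\d t\\
&=:\mathrm{I}_{\vv,\dd}+\mathrm{II}_{\vv,\dd}+\mathrm{III}_{\vv,\dd},
\end{split}
\end{equation*}
the exact numerical constant in front of $\mathrm{III}_{\vv,\dd}$ being immaterial for the argument, so that \eqref{c3} reduces to proving $\mathrm{I}_{\vv,\dd}\to0$, $\mathrm{II}_{\vv,\dd}\to K(\theta)$ and $\mathrm{III}_{\vv,\dd}\to I(\theta)$ (with $K,I$ as in \eqref{0z2}, \eqref{0z3}) in probability as $\vv\to0$ and $n\to\8$ (equivalently $\dd\to0$).

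Two preliminary estimates are needed. First, differentiating the explicit quotient formula \eqref{v3} for $\nn_\theta b^{(\dd)}$ once more in $\theta$ yields $\nn_\theta^{(2)}b^{(\dd)}$ as $\nn_\theta^{(2)}b/(1+\dd^\aa|b|)$ plus correction terms, each a product of the tensors $b$, $\nn_\theta b$, $\nn_\theta^{(2)}b$ against tame factors of the form $\dd^\aa/(1+\dd^\aa|b|)^j$ and a single harmless $1/|b|$ that is always multiplied by enough powers of $|b|$; combined with the polynomial growth bounds \eqref{q4} for $b$ and \eqref{v4} for $\nn_\theta b$ and the analogous bound for $\nn_\theta^{(2)}b$ deduced from $({\bf C})$ by comparison with the value at $\zeta_0$, this gives a polynomial growth bound for $\|\nn_\theta^{(2)}b^{(\dd)}(\zeta,\mathscr{L}_\zeta,\theta)\|$ uniform in $\dd\in(0,1]$, together with an $O(\dd^\aa)$–perturbation estimate for $\|\nn_\theta^{(2)}b^{(\dd)}(\zeta_1,\mathscr{L}_{\zeta_1},\theta)-\nn_\theta^{(2)}b(\zeta_2,\mathscr{L}_{\zeta_2},\theta)\|$ of the same shape as \eqref{t1} and \eqref{v5}; the corresponding bounds for $b^{(\dd)}$, $\nn_\theta b^{(\dd)}$ and $\hat\si$ are already at hand (cf.\ \eqref{t2}, \eqref{t3}, \eqref{v3}, \eqref{v4}, \eqref{v5}). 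Second, I shall use the uniform moment bound \eqref{r6} (valid for every $p>0$), \eqref{r0}, and the mean‑square convergence \eqref{a9} of Lemma~\ref{le1}.

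With these in hand the three limits are routine. For $\mathrm{I}_{\vv,\dd}$ the It\^o isometry gives $\E\|\mathrm{I}_{\vv,\dd}\|^2\le c\,\vv^2\int_0^T\E\big(\|\nn_\theta^{(2)}b^{(\dd)}(\bar Y_{t_\dd}^\vv,\mathscr{L}_{\bar Y_{t_\dd}^\vv},\theta)\|^2\,\|\hat\si(\bar Y_{t_\dd}^\vv)\|^2\,\|\si(\bar Y_{t_\dd}^\vv,\mathscr{L}_{\bar Y_{t_\dd}^\vv})\|^2\big)\d t$, and the growth bounds above together with \eqref{t2}, \eqref{r2} and H\"older's inequality bound the integrand by $c\,(1+\E\|\bar Y_{t_\dd}^\vv\|_\8^{p})$ for some $p>0$, which is bounded uniformly by \eqref{r6}; hence $\E\|\mathrm{I}_{\vv,\dd}\|^2\le c\,\vv^2\to0$, so $\mathrm{I}_{\vv,\dd}\to0$ in probability. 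For $\mathrm{II}_{\vv,\dd}$ and $\mathrm{III}_{\vv,\dd}$ I would repeat, line for line, the argument of Lemma~\ref{lem} (see \eqref{t1}--\eqref{t6}): write $\mathrm{II}_{\vv,\dd}-K(\theta)$ (resp.\ $\mathrm{III}_{\vv,\dd}-I(\theta)$) as an integral over $[0,T]$ and telescope its integrand through the three (resp.\ two) factors $\nn_\theta^{(2)}b^{(\dd)}$ (or $\nn_\theta b^{(\dd)}$), $\hat\si$, $\Gamma^{(\dd)}$, replacing one factor at a time by its counterpart evaluated at $X_t^0$; each resulting term is then dominated, after $({\bf C})$, $({\bf B2})$, $({\bf B3})$, the growth and $O(\dd^\aa)$–perturbation bounds, \eqref{r0}, \eqref{r6} and H\"older's inequality, by an expression of the form $c\int_0^T\ss{\E\|\bar Y_{t_\dd}^\vv-X_t^0\|_\8^2}\,\big(1+\E\|\bar Y_{t_\dd}^\vv\|_\8^{p}\big)\d t+c\,\dd^\aa\int_0^T\big(1+\E\|\bar Y_{t_\dd}^\vv\|_\8^{p}\big)\d t$, which tends to $0$ by \eqref{a9} and $\dd^\aa\to0$. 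This yields $L^1$–convergence of $\mathrm{II}_{\vv,\dd}$ and $\mathrm{III}_{\vv,\dd}$, hence convergence in probability, and \eqref{c3} follows.

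The main obstacle I anticipate is the bookkeeping for the second $\theta$–derivative of the tamed drift $b^{(\dd)}=b/(1+\dd^\aa|b|)$: one must differentiate this quotient twice, keep track of the products $(b\,b^*)(\nn_\theta b)/|b|$, $(\nn_\theta b)(\nn_\theta b)^*$, $(b\,b^*)(\nn_\theta^{(2)}b)$, $\dots$ that arise, and verify that every \emph{correction} term carries at least one factor $\dd^\aa$ (so that it disappears in the limit yet does not blow up for small $\dd$) while the apparently singular factors $1/|b|$ are always compensated by enough powers of $|b|$ controlled by $({\bf B1})$–type growth — exactly the phenomenon already visible in \eqref{v3}--\eqref{v5}. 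Apart from this, the only delicate point is that the polynomial moments of $\|\bar Y_{t_\dd}^\vv\|_\8$ entering the estimates are of arbitrarily high (finite) order, so that \eqref{r6} is genuinely used with $p$ large; everything else is a direct transcription of Lemmas~\ref{lem} and \ref{le2}.
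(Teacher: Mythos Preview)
Your proposal is correct and follows essentially the same route as the paper. The only organisational difference is that the paper, after writing the second derivative as the two sums you obtain, immediately substitutes the explicit splittings $(\nn_\theta^{(2)}(b^{(\dd)})^*)=(\nn_\theta^{(2)}b^*)-\dd^\aa\Theta_1$ and $(\nn_\theta b^{(\dd)})^*\hat\si(\nn_\theta b^{(\dd)})=(\nn_\theta b^*)\hat\si(\nn_\theta b)-\dd^\aa\Theta_2$ (your ``correction terms'' computed explicitly), thereby arriving at a five--term decomposition $I_1+\cdots+I_5$ rather than your three terms; the paper's $I_1,I_2$ are your $\mathrm{II}_{\vv,\dd},\mathrm{III}_{\vv,\dd}$ with the $\dd^\aa$--corrections already stripped off (so that Lemma~\ref{lem} applies directly), its $I_3$ is your $\mathrm{I}_{\vv,\dd}$ with untamed $\nn_\theta^{(2)}b$, and $I_4,I_5$ collect the $\dd^\aa\Theta_1,\dd^\aa\Theta_2$ residuals. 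Your packaging---keeping the tamed derivatives throughout and absorbing the taming error into the $O(\dd^\aa)$--perturbation bound when comparing with the limit---is slightly more economical, but the estimates invoked (polynomial growth of $\Theta_1,\Theta_2$, It\^o isometry for the stochastic term, telescoping plus H\"older and \eqref{a9} for the Riemann sums) are identical.
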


\begin{proof}
 From \eqref{c1} and \eqref{c2}, we deduce that
\begin{equation*}
\begin{split}
(\nn_\theta^{(2)}\Phi_{n,\vv})(\theta)& =2
\sum_{k=1}^n(\nn_\theta^{(2)}(\Gamma^{(\dd)})^*)(\bar
Y_{(k-1)\dd}^\vv,\theta,\theta_0)
\circ\Big(\hat\si(\bar Y_{(k-1)\dd}^\vv)P_k(\theta)\Big)\\
&\quad+2 \sum_{k=1}^n(\nn_\theta\Gamma^{(\dd)})^*(\bar
Y_{(k-1)\dd}^\vv,\theta,\theta_0)\hat\si(\bar
Y_{(k-1)\dd}^\vv)(\nn_\theta P_k)(\theta)\\
& =-2 \sum_{k=1}^n(\nn_\theta^{(2)}(b^{(\dd)})^*)(\bar
Y_{(k-1)\dd}^\vv,\mathscr{L}_{\bar Y_{(k-1)\dd}^\vv},\theta)
\circ\Big(\hat\si(\bar Y_{(k-1)\dd}^\vv)P_k(\theta)\Big)\\
&\quad+2 \dd\sum_{k=1}^n(\nn_\theta b^{(\dd)})^* (\bar
Y_{(k-1)\dd}^\vv,\mathscr{L}_{\bar
Y_{(k-1)\dd}^\vv},\theta)\hat\si(\bar Y_{(k-1)\dd}^\vv)(\nn_\theta
b^{(\dd)})(\bar Y_{(k-1)\dd}^\vv,\mathscr{L}_{\bar
Y_{(k-1)\dd}^\vv},\theta).
\end{split}
\end{equation*}
For any random variable $\zeta\in\C$ with
$\mathscr{L}_\zeta\in\mathcal {P}_2(\C)$, by the chain rule, we
infer from \eqref{v3} that
\begin{equation}\label{e6}
\begin{split}
\Big(\nn_\theta^{(2)}(b^{(\dd)})^*\Big)(\zeta,\mathscr{L}_\zeta,\theta)
&=\bigg(\nn_\theta\bigg(\ff{(\nn_\theta
b^*)}{1+\dd^\aa|b|}\bigg)\bigg)(\zeta,\mathscr{L}_\zeta,\theta)\\
&\quad-\dd^\aa\bigg(\nn_\theta\bigg(\ff{(\nn_\theta b^*)( b
b^*)}{|b|(1+\dd^\aa|b|)^2}\bigg)\bigg)(\zeta,\mathscr{L}_\zeta,\theta)\\
&=(\nn_\theta^{(2)} b^*)
(\zeta,\mathscr{L}_\zeta,\theta)-\dd^\aa\Theta_1(\zeta,\mathscr{L}_\zeta,\theta).
\end{split}
\end{equation}
Next, the chain rule shows that
\begin{equation*}
\begin{split}
& \Theta_1(\zeta,\mathscr{L}_\zeta,\theta)
:= \bigg(\ff{|b|(\nn_\theta^{(2)} b^*)}{1+\dd^\aa|b|}+\ff{\Big(b^*(\ff{\partial}{\partial\theta_1}b)(\nn_\theta
b)^*,\cdots,b^*(\ff{\partial}{\partial\theta_p}b)(\nn_\theta
b)^*\Big)_{p\times pd}}{|b|(1+\dd^\aa|b|)^2}\\
&\qquad\qquad\quad+\ff{\Big((\ff{\partial}{\partial\theta_1}(\nn_\theta b^*))( b
b^*),\cdots,(\ff{\partial}{\partial\theta_p}(\nn_\theta b)^*)( b
b^*)\Big)_{p\times pd}}{|b|(1+\dd^\aa|b|)^2}\\
&\qquad\qquad\quad+\ff{\Big((\nn_\theta b)^*( (\ff{\partial}{\partial\theta_1}b)
b^*+b\ff{\partial}{\partial\theta_1}b^*),\cdots,(\nn_\theta b)^*(
(\ff{\partial}{\partial\theta_p}b)
b^*+b\ff{\partial}{\partial\theta_p}b^*)\Big)_{p\times pd}}{|b|(1+\dd^\aa|b|)^2}\\
&\quad-\ff{1+3\dd^\aa|b|}{|b|^3(1+\dd^\aa|b|)^3}\Big(
\Big(b^*\Big(\ff{\partial}{\partial\theta_1}b\Big)\Big)(\nn_\theta
b)^*( b
b^*),\cdots,\Big(b^*\Big(\ff{\partial}{\partial\theta_p}b\Big)\Big)(\nn_\theta
b)^*( b b^*)\Big)\bigg)_{p\times
pd}(\zeta,\mathscr{L}_\zeta,\theta).
\end{split}
\end{equation*}
 Thanks to \eqref{v3}, it follows
that
\begin{equation}\label{e5}
\begin{split}
&\Big((\nn_\theta b^{(\dd)})^*\hat\si(\zeta)(\nn_\theta
b^{(\dd)})\Big)(\zeta,\mathscr{L}_\zeta,\theta) = \Big((\nn_\theta
b^*)\hat\si(\zeta)(\nn_\theta
b)\Big)(\zeta,\mathscr{L}_\zeta,\theta)-  \dd^{
\aa}\Theta_2(\zeta,\mathscr{L}_\zeta,\theta),
\end{split}
\end{equation}
where
\begin{equation*}
\begin{split}
\Theta_2(\zeta,\mathscr{L}_\zeta,\theta):&=\bigg( \ff{ (2|
b|+\dd^{\aa}| b |^2)(\nn_\theta b^*) \hat\si(\zeta)(\nn_\theta b)
}{(1+\dd^\aa| b |)^2}  +\ff{(\nn_\theta b^*) \hat\si(\zeta)(   b
\,\, b^*) (\nn_\theta
  b) }{|
b |(1+\dd^\aa|
b |)^3} \\
&\quad+ \ff{(\nn_\theta b^*) (  b \, b^*) \hat\si(\zeta)(\nn_\theta
b) }{| b |(1+\dd^\aa| b |)^3} - \dd^{ \aa} \ff{(\nn_\theta b^*) (  b
\, b^*) \hat\si(\zeta)( b \, b^*) (\nn_\theta b) }{| b
|^2(1+\dd^\aa| b |)^4}\bigg)(\zeta,\mathscr{L}_\zeta,\theta).
\end{split}
\end{equation*}
Thus, taking \eqref{e5} and \eqref{e6} into consideration yields
that
\begin{equation*}
\begin{split}
(\nn_\theta^{(2)}\Phi_{n,\vv})(\theta)& =  -2
\dd\sum_{k=1}^n(\nn_\theta^{(2)}b^*)(\bar
Y_{(k-1)\dd}^\vv,\mathscr{L}_{\bar Y_{(k-1)\dd}^\vv},\theta)
\circ\Big(\hat\si(\bar Y_{(k-1)\dd}^\vv)\Gamma^{(\dd)}(\bar
Y_{(k-1)\dd}^\vv,\theta,\theta_0)\Big)\\
&\quad+2 \dd\sum_{k=1}^n\Big((\nn_\theta
b^*)\hat\si(\zeta)(\nn_\theta b)\Big)(\bar
Y_{(k-1)\dd}^\vv,\mathscr{L}_{\bar Y_{(k-1)\dd}^\vv},\theta)\\
&\quad-2 \sum_{k=1}^n(\nn_\theta^{(2)}b^*)(\bar
Y_{(k-1)\dd}^\vv,\mathscr{L}_{\bar Y_{(k-1)\dd}^\vv},\theta)
\circ\Big(\hat\si(\bar Y_{(k-1)\dd}^\vv)P_k(\theta_0)\Big)\\
&\quad-2\dd^\aa \sum_{k=1}^n\Theta_1(\bar
Y_{(k-1)\dd}^\vv,\mathscr{L}_{\bar Y_{(k-1)\dd}^\vv},\theta)
\circ\Big(\hat\si(\bar Y_{(k-1)\dd}^\vv)P_k(\theta)\Big)\\
&\quad- \dd^{ 1+\aa}\sum_{k=1}^n\Theta_2(\bar
Y_{(k-1)\dd}^\vv,\mathscr{L}_{\bar Y_{(k-1)\dd}^\vv},\theta)\\
&=:\sum_{i=1}^5I_i(n,\vv).
\end{split}
\end{equation*}
By following the argument to derive \eqref{t4},   we deduce from
({\bf A3}) that
\begin{equation}\label{c6}
I_1(n,\vv)\rightarrow K(\theta) ~~\mbox{ and
}~~I_2(n,\vv)\rightarrow I(\theta),~~~~\mbox{ in probability }
\end{equation}
as $\vv\rightarrow0$ and $\dd\rightarrow0$. Notice from ({\bf A3})
and \eqref{q4} that
\begin{equation}\label{c4}
\begin{split}
\|\Theta_1\|(\zeta,\mathscr{L}_\zeta,\theta)&\le c\Big((|b|+
\|\nn_\theta^{(2)} b\|+(1+3 |b|)\|\nn_\theta^{(2)}
b\|)\|\nn_\theta^{(2)} b\|\Big)(\zeta,\mathscr{L}_\zeta,\theta)\\
&\le c(1+\|\zeta\|_\8^{4(1+q)}+\mathcal
{W}_2(\mathscr{L}_\zeta,\dd_{\zeta_0})^4).
\end{split}
\end{equation}
On the other hand, owing to \eqref{v4}, \eqref{t2}, and \eqref{q4},
one has
\begin{equation}\label{c5}
\begin{split}
\|\Theta_2\|(\zeta,\mathscr{L}_\zeta,\theta)&\le 2\Big(|
b|~\|\nn_\theta b\|^2\| \hat\si(\zeta)\| (1+ 2| b | )
\Big)(\zeta,\mathscr{L}_\zeta,\theta)\\
&\le c(1+\|\zeta\|_\8^{4(1+q)}+\mathcal
{W}_2(\mathscr{L}_\zeta,\dd_{\zeta_0})^4).
\end{split}
\end{equation}
Thus, by mimicking the argument of \eqref{q6}, we obtain from
\eqref{c4} that
\begin{equation}\label{c7}
I_3(n,\vv)\rightarrow0~~\mbox{ and
}~~I_4(n,\vv)\rightarrow0~~~\mbox{ in probability }
\end{equation}
as $\vv\rightarrow0$ and $\dd\rightarrow0$. Furthermore, \eqref{r6}
and \eqref{c5} enable us to get that
\begin{equation}\label{c8}
I_5(n,\vv)\rightarrow0 ~~~\mbox{ in probability }
\end{equation}
whenever  $\vv\rightarrow0$ and $\dd\rightarrow0$.
 Thus, the desired
assertion \eqref{c3} follows from \eqref{c6}, \eqref{c7}, as well as
\eqref{c8}.
\end{proof}

Now, we move forward to complete the
\begin{proof}[ Proof of Theorem \ref{th2}]
With Lemmas \ref{le2} and \ref{le3} at hand, the proof of Theorem
\ref{th2} is parallel to that of \cite[Theorem 4.1]{RW}. Whereas, to
make the content self-contained, we give an outline of the proof. In
terms of Theorem \ref{th1}, there exists a sequence
$\eta_{n,\vv}\rightarrow0$ as $\vv\rightarrow0$ and $n\rightarrow\8$
such that $\hat\theta_{n,\vv}\in
B_{\eta_{n,\vv}}(\theta_0)\subset\Theta$, $\P$-a.s.  By the Taylor
expansion, one has
\begin{equation}\label{a4}
(\nn_\theta\Phi_{n,\vv})(\hat\theta_{n,\vv})=(\nn_\theta\Phi_{n,\vv})(\theta_0)+D_{n,\vv}(\hat\theta_{n,\vv}-\theta_0),~~~\hat\theta_{n,\vv}\in
B_{\eta_{n,\vv}}(\theta_0)
\end{equation}
with
\begin{equation*}
D_{n,\vv}:=\int_0^1(\nn_\theta^{(2)}\Phi_{n,\vv})(\theta_0
+u(\hat\theta_{n,\vv}-\theta_0))\d u,~~~~\hat\theta_{n,\vv}\in
B_{\eta_{n,\vv}}(\theta_0).
\end{equation*}
 Observe that, for $\hat\theta_{n,\vv}\in
B_{\eta_{n,\vv}}(\theta_0)$,
\begin{equation*}
\begin{split}
\|D_{n,\vv}-K_0(\theta_0)\|&\le\|D_{n,\vv}-(\nn_\theta^{(2)}\Phi_{n,\vv})(\theta_0)\|+\|(\nn_\theta^{(2)}\Phi_{n,\vv})(\theta_0)-K_0(\theta_0)\|\\
&\le\int_0^1\|(\nn_\theta^{(2)}\Phi_{n,\vv})(\theta_0
+u(\hat\theta_{n,\vv}-\theta_0))-(\nn_\theta^{(2)}\Phi_{n,\vv})(\theta_0)\|\d
u\\
&\quad+ \|(\nn_\theta^{(2)}\Phi_{n,\vv})(\theta_0)-K_0(\theta_0)\|\\
&\le \sup_{\theta\in
B_{\eta_{n,\vv}}(\theta_0)}\|(\nn_\theta^{(2)}\Phi_{n,\vv})(\theta)-(\nn_\theta^{(2)}\Phi_{n,\vv})(\theta_0)\|+
\|(\nn_\theta^{(2)}\Phi_{n,\vv})(\theta_0)-K_0(\theta_0)\|\\
&\le\sup_{\theta\in
B_{\eta_{n,\vv}}(\theta_0)}\|(\nn_\theta^{(2)}\Phi_{n,\vv})(\theta)-K_0(\theta)\|+\sup_{\theta\in
B_{\eta_{n,\vv}}(\theta_0)}\|K_0(\theta)-K_0(\theta_0)\|\\
&\quad+2\|(\nn_\theta^{(2)}\Phi_{n,\vv})(\theta_0)-K_0(\theta_0)\|.
\end{split}
\end{equation*}
 This, together
with Lemma \ref{le3} and  continuity of $K_0(\cdot)$, gives that
\begin{equation}\label{a0}
D_{n,\vv}\rightarrow K_0(\theta_0)~~~~\mbox{ in probability }
\end{equation}
as $\vv\rightarrow0$ and $n\rightarrow\8.$ By following the exact
line of \cite[Theorem 2.2]{LSS}, we can deduce that $D_{n,\vv}$ is
invertible on the set
\begin{equation*}
\Gamma_{n,\vv}:=\Big\{\sup_{\theta\in
B_{\eta_{n,\vv}}(\theta_0)}\|(\nn_\theta^{(2)}\Phi_{n,\vv})(\theta)-K_0(\theta_0)\|\le\ff{\aa}{2},~~\hat\theta_{n,\vv}\in
B_{\eta_{n,\vv}}(\theta_0) \Big\}
\end{equation*}
for some constant $\aa>0.$  Let
\begin{equation*}
\mathscr{D}_{n,\vv}=\{D_{n,\vv} \mbox{ is invertible },
\hat\theta_{n,\vv}\in B_{\eta_{n,\vv}}(\theta_0) \}.
\end{equation*}
By virtue of Lemma \ref{le3}, one has
\begin{equation}\label{n1}
\lim_{\vv\rightarrow0,n\rightarrow\8}\P\Big(\sup_{\theta\in
B_{\eta_{n,\vv}}(\theta_0)}\|(\nn_\theta^{(2)}\Phi_{n,\vv})(\theta)-K_0(\theta_0)\|\le\ff{\aa}{2}\Big)=1.
\end{equation}
On the other hand, recall that
\begin{equation}\label{n2}
\lim_{\vv\rightarrow0,n\rightarrow\8}\P\Big(\hat\theta_{n,\vv}\in
B_{\eta_{n,\vv}}(\theta_0)\Big)=1.
\end{equation}
By the fundamental fact: for any events $A,B$,
$\P(AB)=\P(A)+\P(B)-\P(A\cup B)$, we observe that
\begin{equation}\label{n3}
\begin{split}
1\ge\P(\Gamma_{n,\vv})&\ge\P\Big(\sup_{\theta\in
B_{\eta_{n,\vv}}(\theta_0)}\|(\nn_\theta^{(2)}\Phi_{n,\vv})(\theta)-K_0(\theta_0)\|\le\ff{\aa}{2}\Big)\\
&\quad+\P\Big(\hat\theta_{n,\vv}\in
B_{\eta_{n,\vv}}(\theta_0)\Big)-1.
\end{split}
\end{equation}
Thus, taking advantage of \eqref{n1}, \eqref{n2} as well as
\eqref{n3}, we deduce from Sandwich theorem that
\begin{equation}\label{n4}
\P(\mathscr{D}_{n,\vv})\ge \P(\Gamma_{n,\vv})\rightarrow1
\end{equation}
 as $\vv\rightarrow0$ and $n\rightarrow\8$. Set
\begin{equation*}
U_{n,\vv}:=D_{n,\vv}{\bf1}_{\mathscr{D}_{n,\vv}}+I_{p\times
p}{\bf1}_{\mathscr{D}_{n,\vv}^c},
\end{equation*}
where $I_{p\times p}$ is a $p\times p$ identity matrix. For
$S_{n,\vv}:=\vv^{-1}(\hat\theta_{n,\vv}-\theta_0)$, we deduce from
\eqref{a4}  that
\begin{align*}
S_{n,\vv}&=S_{n,\vv}{\bf1}_{\mathscr{D}_{n,\vv}}+ S_{n,\vv}{\bf1}_{\mathscr{D}_{n,\vv}^c}\\
&=U_{n,\vv}^{-1}D_{n,\vv}S_{n,\vv}{\bf1}_{\mathscr{D}_{n,\vv}}+ S_{n,\vv}{\bf1}_{\mathscr{D}_{n,\vv}^c}\\
&=\vv^{-1}U_{n,\vv}^{-1}\{(\nn_\theta\Phi_{n,\vv})(\hat\theta_{n,\vv})-(\nn_\theta\Phi_{n,\vv})(\theta_0)\}{\bf1}_{\mathscr{D}_{n,\vv}}
+ S_{n,\vv}{\bf1}_{\mathscr{D}_{n,\vv}^c}\\
&=-\vv^{-1}U_{n,\vv}^{-1}(\nn_\theta\Phi_{n,\vv})(\theta_0){\bf1}_{\mathscr{D}_{n,\vv}}
+ S_{n,\vv}{\bf1}_{\mathscr{D}_{n,\vv}^c}\\
&\rightarrow I^{-1}(\theta_0)\int_0^T\Upsilon(X_s^0,\theta_0)\d
B(s),
\end{align*}
as $\vv\rightarrow0$ and $n\rightarrow\infty$, where in the forth
identity we dropped the term
$(\nn_\theta\Phi_{n,\vv})(\hat\theta_{n,\vv})$ according to the
notion of LSE and Fermat's lemma, and the last display follows from
Lemma \ref{le2}, \eqref{a0} as well as \eqref{n4} and by noting
$K_0(\theta_0)=I(\theta_0)$. We therefore complete the proof.
\end{proof}

\section{Proof of Example \ref{exa}}

\begin{proof}[Proof of Example \ref{exa}]
It is sufficient to check all of assumptions in Theorems \ref{th1}
and Theorem \ref{th2} are fulfilled.

For any $\zeta,\zeta'\in\C$, $\mu\in\mathcal {P}_2(\C)$ and
$\theta=(\theta^{(1)},\theta^{(2)})^*\in\Theta_0$, set
\begin{equation}\label{ex1}
b_0(\zeta,\zeta'):=-\zeta^3(0)+\zeta(0)+\int_{-r_0}^0\zeta(v)\d
v+\int_{-r_0}^0\zeta'(v)\d v,
\end{equation}
\begin{equation*}
b(\zeta,\mu,\theta):=\theta^{(1)}+\theta^{(2)}\int_\C
b_0(\zeta,\zeta')\mu(\d\zeta') ~~\mbox{ and
}~~\si(\zeta):=\si(\zeta,\mu):=1+\int_{-r_0}^0|\zeta(v)|\d v.
\end{equation*}
Then, \eqref{d1} can be reformulated as \eqref{eq1}. By  \eqref{ex1}
and H\"older's inequality, we find out some constants $c_1,c_2>0$
such that
\begin{equation}\label{d3}
\begin{split}
&\<\zeta_1(0)-\zeta_2(0),b(\zeta_1,\mu,\theta)-b(\zeta_2,\mu,\theta)\>\\
&= \theta^{(2)} \int_\C \<\zeta_1(0)-\zeta_2(0),b_0(\zeta_1,\zeta)-
b_0(\zeta_2,\zeta)\>\mu_t(\d\zeta) \\
&\le c_1\Big\{|\zeta_1(0)-\zeta_2(0)|^2+\int_{-r_0}^0|\zeta_1(v)-\zeta_2(v)|^2\d v\Big\}\\
&\le c_2 \,\|\zeta_1-\zeta_2\|_\8^2,~~~~~\mu\in\mathcal {P}_2(\C),
~~\zeta_1,\zeta_2 \in\C
\end{split}
\end{equation}
Next, we deduce from \eqref{ex1} that for some constant $c_3>0,$
\begin{equation*}
\begin{split}
|b(\zeta,\mu,\theta)-b(\zeta,\nu,\theta)|&\le\theta^{(2)}\Big|\int_\C
b_0(\zeta,\zeta_1)\mu(\d\zeta_1)-\int_\C
b_0(\zeta,\zeta_2)\nu(\d\zeta_2)\Big|\\
&\le\theta^{(2)}\int_\C\int_\C| b_0(\zeta,\zeta_1)-
b_0(\zeta,\zeta_2)|\pi(\d\zeta_1,\d\zeta_2)\\
&\le c_3\,\mathbb{W}_2(\mu,\nu),~~~~\zeta\in\C,~~~\mu,\nu\in\mathcal
{P}_2(\C),
\end{split}
\end{equation*}
in which $\pi\in\mathcal {C}(\mu,\nu)$. Therefore, ({\bf A1}) holds
true. Next,
 for any
$\zeta_1,\zeta_2\in\C$ and $\mu,\nu\in\mathcal {P}_2(\C)$, we obtain
that
\begin{equation*}
 |\si(\zeta_1,\mu)-\si(\zeta_2,\nu) |\le\int_{-r_0}^0|\zeta_1(\theta)-\zeta_2(\theta)|\d\theta\le r_0\|\zeta_1-\zeta_2\|_\8.
\end{equation*}
So ({\bf A2}) is satisfied. For any
$\zeta_1,\zeta_2,\zeta^{(1)},\zeta^{(2)}\in\C$, note that
\begin{equation}\label{ex2}
\begin{split}
&|b_0(\zeta_1,\zeta^{(1)})-b_0(\zeta_2,\zeta^{(2)})|\\&\le
|\zeta_1^3(0)-\zeta_2^3(0)|+|\zeta_1(0)-\zeta_2(0)|+\int_{-r_0}^0|\zeta_1(v)-\zeta_2(v)|\d
v+\int_{-r_0}^0|\zeta^{(1)}(v)-\zeta^{(2)}(v)|\d v\\
&\le
c_4(1+\zeta_1^2(0)+\zeta_2^2(0))|\zeta_1(0)-\zeta_2(0)|+r_0\|\zeta_1-\zeta_2\|_\8+r_0\|
\zeta^{(1)}-\zeta^{(2)}\|_\8\\
&\le
c_5(1+\|\zeta_1\|_\8^2+\|\zeta_2\|_\8^2)\|\zeta_1-\zeta_2\|_\8+r_0\|
\zeta^{(1)}-\zeta^{(2)} \|_\8
\end{split}
\end{equation}
for some constants $c_4,c_5>0.$ Next, we have
\begin{equation}\label{d4}
(\nn_\theta b)(\zeta,\mu,\theta)=\Big(1,\int_\C
b_0(\zeta,\zeta')\mu(\d\zeta')\Big)^*~~~\mbox{ and
}~~~(\nn_\theta(\nn_\theta b))(\zeta,\mu,\theta)={\bf 0}_{2\times2},
\end{equation}
where ${\bf 0}_{2\times2}$ stands for the $2\times 2$-zero matrix.
Thus, \eqref{ex2} and \eqref{d4} enable us to deduce that ({\bf B2})
and ({\bf C}) hold, respectively. Furthermore, due to \eqref{ex2},
we find that
\begin{equation*}
\begin{split}
|b(\zeta_1,\mu,\theta)-b(\zeta_2,\nu,\theta)|&\le\theta^{(2)}\Big|\int_\C
b_0(\zeta_1,\zeta^{(1)})\mu(\d\zeta^{(1)})-\int_\C
b_0(\zeta_2,\zeta^{(2)})\nu(\d\zeta^{(2)})\Big|\\
&\le\theta^{(2)}\int_\C\int_\C| b_0(\zeta_1,\zeta^{(1)})-\int_\C
b_0(\zeta_2,\zeta^{(2)})|\pi(\d\zeta^{(1)},\d\zeta^{(2)})\\
&\le
c_6(1+\|\zeta_1\|_\8^2+\|\zeta_2\|_\8^2)\|\zeta_1-\zeta_2\|_\8+c_6\mathbb{W}_2(\mu,\nu).
\end{split}
\end{equation*}
Therefore, we infer that ({\bf B1})  holds. Next, observe that
\begin{equation*}
|\si^{-2}(\zeta_1,\mu)-\si^{-2}(\zeta_2,\nu)| \le
c_7\|\zeta_1-\zeta_2\|_\8
\end{equation*}
for some $c_7>0.$ Consequently, ({\bf B3}) is true.

The discrete-time EM scheme associated with \eqref{d1} is given by
\begin{equation}
Y^\vv(t_k)=Y^\vv(t_{k-1})+\Big(\theta^{(1)}+\theta^{(2)}\int_\C
b_0(\hat Y^\vv_{t_{k-1}},\zeta)\mathscr{L}_{\hat
Y^\vv_{t_{k-1}}}(\d\zeta)\Big)\dd+\vv\,\si(\hat
Y^\vv_{t_{k-1}})\triangle B_k,~~~k\ge1,
\end{equation}
with $Y^\vv(t)=X^\vv(t)=\xi(t), t\in[-r_0,0],$ where $(\hat
Y^\vv_{t_k})$ is defined as in \eqref{w2}. According to \eqref{eq2},
the contrast function admits the form below
\begin{eqnarray*}
\Psi_{n,\vv}(\theta)&=&\vv^{-2}\delta^{-1}\sum_{k=1}^n\ff{1}{(1+|Y^\vv(t_{k-1})|)^2}\Big|Y^\vv(t_k)
-Y^\vv(t_{k-1})\\
&&\qquad\qquad-\Big(\theta^{(1)}+\theta^{(2)}\int_\C
b_0(\hat Y^\vv_{t_{k-1}},\zeta)\mathscr{L}_{\hat
Y^\vv_{t_{k-1}}}(\d\zeta)\Big)\dd\Big|^2.
\end{eqnarray*}
Observe that
\begin{equation*}
\begin{split}
\ff{\partial}{\partial\theta^{(1)}}\Psi_{n,\vv}(\theta)&=-2\,\vv^{-2}\sum_{k=1}^n\ff{1}{(1+|Y^\vv(t_{k-1})|)^2}\Big\{Y^\vv(t_k)-Y^\vv(t_{k-1})\\
&\quad-\Big(\theta^{(1)}+\theta^{(2)}\int_\C b_0(\hat
Y^\vv_{t_{k-1}},\zeta)\mathscr{L}_{\hat
Y^\vv_{t_{k-1}}}(\d\zeta)\Big)\dd\Big\},
\end{split}
\end{equation*}
and
\begin{equation*}
\begin{split}
\ff{\partial}{\partial\theta^{(2)}}\Psi_{n,\vv}(\theta)&=-2\,\vv^{-2}\sum_{k=1}^n\ff{1}{(1+|Y^\vv(t_{k-1})|)^2}\Big\{Y^\vv(t_k)-Y^\vv(t_{k-1})\\
&\quad-\Big(\theta^{(1)}+\theta^{(2)}\int_\C b_0(\hat
Y^\vv_{t_{k-1}},\zeta)\mathscr{L}_{\hat
Y^\vv_{t_{k-1}}}(\d\zeta)\Big)\dd\Big\} \int_\C b_0(\hat
Y^\vv_{t_{k-1}},\zeta)\mathscr{L}_{\hat Y^\vv_{t_{k-1}}}(\d\zeta).
\end{split}
\end{equation*}
Subsequently, solving the equation below
\begin{equation*}
\ff{\partial}{\partial\theta^{(1)}}\Psi_{n,\vv}(\theta)=\ff{\partial}{\partial\theta^{(2)}}\Psi_{n,\vv}(\theta)=0,
\end{equation*}
we then obtain the LSE
$\hat\theta_{n,\vv}=(\hat\theta_{n,\vv}^{(1)},\hat\theta_{n,\vv}^{(2)})^*$
of the unknown parameter
$\theta=(\theta^{(1)},\theta^{(2)})^*\in\Theta_0$ with the following 
\begin{equation*}
\hat\theta_{n,\vv}^{(1)}=\ff{A_2A_5-A_3A_4}{\dd(A_1A_5-A_4^2)}~~~~~\mbox{
and }~~~~~
\hat\theta_{n,\vv}^{(2)}=\ff{A_1A_3-A_2A_4}{\dd(A_1A_5-A_4^2)},
\end{equation*}
where
\begin{equation*}
A_1:=\sum_{k=1}^n\ff{1}{(1+|Y^\vv(t_{k-1})|)^2},~~~~~~~~~~~~~~~~~~~~~~~~~~~~~~
~~A_2:=\sum_{k=1}^n\ff{Y^\vv(t_k)-Y^\vv(t_{k-1})}{(1+|Y^\vv(t_{k-1})|)^2},
\end{equation*}
\begin{equation*}
A_3:=\sum_{k=1}^n\ff{(Y^\vv(t_k)-Y^\vv(t_{k-1}))\int_\C b_0(\hat
Y^\vv_{t_{k-1}},\zeta)\mathscr{L}_{\hat
Y^\vv_{t_{k-1}}}(\d\zeta)}{(1+|Y^\vv(t_{k-1})|)^2},~~~A_4:=\sum_{k=1}^n\ff{\int_\C
b_0(\hat Y^\vv_{t_{k-1}},\zeta)\mathscr{L}_{\hat
Y^\vv_{t_{k-1}}}(\d\zeta)}{(1+|Y^\vv(t_{k-1})|)^2},
\end{equation*}
and
\begin{equation*}
A_5:=\sum_{k=1}^n\ff{\Big(\int_\C b_0(\hat
Y^\vv_{t_{k-1}},\zeta)\mathscr{L}_{\hat
Y^\vv_{t_{k-1}}}(\d\zeta)\Big)^2}{(1+|Y^\vv(t_{k-1})|)^2}.
\end{equation*}
In terms of Theorem \ref{th1}, $\hat\theta_{n,\vv}\rightarrow\theta$
in probability as $\vv\rightarrow0$ and $n\rightarrow\infty$. Next,
from \eqref{d4}, it follows that
\begin{equation*}
I(\theta_0)=\left(\begin{array}{ccc}
  \int_0^T\ff{1}{(1+|X_s^0|)^2}\d s & \int_0^T\ff{b_0(X_s^0,X_s^0)}{(1+|X_s^0|)^2}\d s\\
  \int_0^T\ff{b_0(X_s^0,X_s^0)}{(1+|X_s^0|)^2}\d s & \int_0^T\ff{b_0^2(X_s^0,X_s^0)}{(1+|X_s^0|)^2}\d s\\
  \end{array}
  \right),
\end{equation*}
and, for $\zeta\in\C,$
\begin{equation*}
\int_0^T\Upsilon(X_s^0,\theta_0)\d B(s)=\left(\begin{array}{c}
  \int_0^T\ff{1}{1+|X^0(s)|}\d B(s) \\
 \int_0^T\ff{ b_0(X_s^0,X_s^0)}{1+|X^0(s)|}\d B(s)\\
  \end{array}
  \right).
\end{equation*}
At last, according to Theorem \ref{th2}, we conclude that
\begin{equation*}
\vv^{-1}(\hat\theta_{n,\vv}-\theta_0)\rightarrow
I^{-1}(\theta_0)\int_0^T\Upsilon(X_s^0,\theta_0)\d B(s)~~~~\mbox{ in
probability }
\end{equation*}
as $\vv\rightarrow0$ and $n\rightarrow\8$ provided that $I(\cdot)$
is positive definite.

\end{proof}

\end{document}